\newcommand{\bbC}{{\mathbb{C}}}
\newcommand{\bbQ}{{\mathbb{Q}}}
\newcommand{\bbR}{{\mathbb{R}}}
\newcommand{\bbZ}{{\mathbb{Z}}}
\newcommand{\PGL}{{\mathrm{PGL}}}
\newcommand{\SL}{{\mathrm{SL}}}
\newcommand{\GL}{{\mathrm{GL}}}
\newcommand{\bfe}{\mathbf{e}}
\newcommand{\bfr}{\mathbf{r}}
\newcommand{\bfs}{\mathbf{s}}
\newcommand{\bfu}{\mathbf{u}}
\newcommand{\bfv}{\mathbf{v}}
\newcommand{\bfw}{\mathbf{w}}
\newtheorem{thm}{Theorem}[section]
\newtheorem{deff}[equation]{Definition}
\newtheorem{lemma}[thm]{Lemma}
\newtheorem{prop}[thm]{Proposition}
\newtheorem*{main}{Main Theorem}
\newtheorem{cor}[thm]{Corollary}
\theoremstyle{remark}
\def\Z{{\mathbb Z}}
\def\H{{\mathbb H}}
\begin{document}

%\subjclass{20C33, 22E50}

\title[integral forms]{Geometry of integral binary hermitian forms}

\author{Mladen Bestvina}

\address{Mladen Bestvina, Department of
 Mathematics, University of Utah, Salt Lake City, UT 84112}
\email{bestvina@math.utah.edu}

\author{Gordan Savin}

\address{Gordan Savin, Department of
 Mathematics, University of Utah, Salt Lake City, UT 84112}
\email{savin@math.utah.edu}

\thanks{Both authors gratefully acknowledge the support by the
  National Science Foundation}

\begin{abstract}
We generalize Conway's approach to integral binary quadratic forms to
study integral binary hermitian forms over quadratic imaginary
extensions of $\bbQ$.  We show that every indefinite anisotropic form determines a plane (``ocean'') in
Mendoza's spine associated with the corresponding Bianchi group in the hyperbolic 3-space.  The ocean 
can be used to compute the group of integral transformations preserving the hermitian form. 
\end{abstract}
\maketitle

\section{Introduction}  

Serre observed that there is a 1-dimensional, $\SL_2(\bbZ)$-invariant,
deformation retract $T$ of the hyperbolic plane $\mathbb H^2$. The
retract $T$ is a 3-valent tree, and connected components of $\mathbb
H^2 \setminus T$ correspond to cusps.  Conway \cite{Co}
uses the tree $T$ to study integral binary quadratic forms. More precisely, a cusp 
 $\alpha\in \mathbb P^1(\bbQ)$ corresponds to a pair $(m,n)$ of relatively
prime integers, unique up to a sign, such that $\alpha=m/n$. In particular, if $f$ is an
integral, binary quadratic form, then the integer $F(\alpha)=f(m,n)$
is well defined. Thus the form $f$ assigns values to the connected
components of the complement of $T$.  
If $f$ is indefinite and anisotropic, then Conway defines a
river $R\subseteq T$.  It is a tree geodesic (i.e. a line) in $T$ whose edges belong to the
closures of two connected components on which the form has opposite
signs. The river $R$ can
be used to estimate and find the minimum of $|f|$, as well as to compute
the group $SO(f)$ of special integral transformations preserving $f$ (see Section \ref{S1}).

Let $k=\bbQ(\sqrt{D})$ be a quadratic imaginary extension of $\bbQ$,
where $D<0$ is a fundamental discriminant. Then
$A=\bbZ[\frac{D+\sqrt{D}}2]$ is the ring of algebraic integers in $k$. 
 Mendoza \cite{Me} and Ash \cite{As} have defined a 2-dimensional,
$\GL_2(A)$-invariant deformation retract $X$ of the hyperbolic space
$\mathbb H^3$ such that the connected components of $\mathbb H^3\setminus X$ correspond 
to cusps for the group $\GL_2(A)$. 
Using the natural Euclidean metric on horospheres in
$\mathbb H^3$, the spine $X$ is a CAT(0) complex (see \cite{iain})
whose 2-cells are Euclidean polygons.  For example, if $A$ is a
Euclidean domain then (and only then) every 2-cell is isometric to a
fundamental domain for $A$ in $\bbC$ (the Voronoi cell).  In
particular, if $A$ is the ring of Eisenstein integers ($D=-3$),
then every 2-cell is a regular hexagon, and if $A$ is the ring of
Gaussian integers ($D=-4$), then every 2-cell is a square.

Let $V$ be a 2-dimensional vector space over $k$ and let 
$f$ be a hermitian form on $V$. We say that $f$ is integral if 
there exists an $A$-lattice $M$ in $V$ (i.e. M is a rank 2
$A$-module, discrete in $V\otimes_k\bbC$)
such that the restriction of $f$ to $M$ has values in $\bbZ$. 
Since the connected components of the complement of $X$ in $\mathbb H^3$ 
correspond to  cusps, the form $f$ assigns values
to these connected components.  Let $v$ be a vertex of $X$. As our first result, 
we show that $f$ is determined by the values at the connected components that contain $v$ 
in the closure. 
 If $f$ is indefinite and anisotropic, then we 
define the ocean $C$ as the union of the 2-cells of $X$ that belong to
the closures of two components on which $f$ has opposite signs. A simple topological argument
shows that the ocean is non-empty. We then show that the values of $f$ along the ocean are bounded by 
an explicit constant depending on the discriminant of $f$. 
This immediately implies 
an old result, due to Bianchi \cite{Bi}, that there are only finitely many
 $\GL_2(A)$-equivalence classes of integral binary hermitian forms of a fixed discriminant.   

Let $U(f)$ be the subgroup of $\GL_2(A)$ preserving the
form $f$.  It is a Fuchsian group. It acts, with a compact quotient,
on a hyperplane $\mathbb H^2_f\subseteq \mathbb H^3$ whose boundary is
the circle defined by $f=0$. We show that the ocean $C$, corresponding
to the form $f$, is topologically a plane on which $U(f)$ acts with
a compact quotient. In particular, $C/\Gamma$ is a nonpositively curved
(i.e. locally CAT(0)) classifying space for any torsion-free subgroup $\Gamma$ of $U(f)$.
Moreover, once the combinatorial structure of
$C$ is known, it is not difficult to find generators of $U(f)$. Thus we have a method of finding 
generators and relations of Fuchsian groups different than the standard approach based on 
ideas of Ford \cite{Fo} (see \cite{Vt} for recent developments). 

We also have very specific results in the case $D=-3$.  
In this case the combinatorial structure of the spine $X$ has an 
arithmetic description that we learned from Marty Weissman.  Here  
many elementary features of Conway's treatment carry over, and  
we prove two sharp results on the minima of integral 
binary hermitian forms (Theorems \ref{T2} and \ref{T3}). 

To summarize, the main result of the paper is the following.

\begin{main}
Let $f$ be an indefinite and anisotropic  integral binary hermitian form 
 over the imaginary quadratic field with discriminant
$D<0$. Then the ocean $C$ is homeomorphic to the plane
and $U(f)$ acts on it
cocompactly. Moreover, the nearest point projection from $C$ to the
hyperbolic plane $\mathbb H^2_f$ obtained by taking the convex hull of the circle
$f=0$ is a homeomorphism. A tiling of $\mathbb H^2_f$ by hyperbolic polygons is obtained by projecting the cells of the ocean $C$. 
\end{main}

The paper is organized as follows. In Sections 2-4 we review some
background material: Conway's topograph, arithmetic of hermitian
forms, and Mendoza's spine in the hyperbolic 3-space $\mathbb H^3$
associated to a Bianchi group $\PGL_2(A)$. In
Section 5 we define the ocean associated to an indefinite and anisotropic integral
binary hermitian form $f$, and in Sections
6-7 we prove that the ocean is a contractible surface and that it
maps homeomorphically to the hyperbolic plane $\mathbb H^2_f$ under the
nearest point projection. In Section 8 we present an algorithm for
computing the symmetry group $PU(f)$ of the form $f$. We end the paper
with several explicit computations  over the Gaussian and
Eisenstein integers. 

We would like to thank Marty Weissman (his idea to study 
cubic binary forms over Eisenstein integers using the spine  
was the starting point of our investigations),  
 Michael Wijaya  for  pointing out a number of inaccuracies and the referee for careful 
reading of the paper and numerous suggestions. 

\section{Conway's tree} \label{S1}  

In this section we describe briefly Conway's approach to quadratic forms
(see also Hatcher's book \cite{hatcher:topographs}). 
We say that a  binary quadratic form $ax^2+2hxy+ cy^2$ is integral if it takes integral 
values on the lattice $\bbZ^2$. This means that $a$ and $c$ are 
integers while $h$ is a half-integer. The integer $D=4h^2-4ac$ is called 
the {\em discriminant} of the form. The form is indefinite if and only if $D>0$ and it is anisotropic 
if and only if $D$ is not a square. 

\smallskip 
A vector $\bfv=(m,n)$ in $\bbZ^2$ 
is called primitive if $m$ and $n$ are relatively prime integers. 
A {\em lax} vector is a pair $\pm \bfv$ where 
$\bfv$ is a primitive vector. In order to keep notation simple we shall 
omit $\pm$ in front of $\bfv$. A lax basis is a set $\{\bfu,\bfv\}$ of two 
lax vectors such that $(\bfu,\bfv)$ is a basis of the lattice $\bbZ^2$ for 
one and therefore for any choice of signs in front of $\bfu$ and $\bfv$. 
A lax superbasis is a set $\{\bfu,\bfv,\bfw\}$ of three lax vectors such that 
any two form a lax basis. Notice that this is equivalent to 
$\bfu+\bfv+\bfw=0$ for some choice of signs.

Conway defines a 2-dimensional ``topograph'' as follows: $2$-dimensional 
regions correspond to lax vectors, edges to lax bases and vertices 
to lax superbases, preserving incidence relations. For example, 
the edge corresponding to the lax basis $\{\bfu,\bfv\}$ is shared by 
the regions corresponding to the lax vectors $\bfu$ and $\bfv$, and so on. 
Let $\bfe_1$ and $\bfe_2$ denote the standard basis vectors in $\bbZ^2$.  
A  part of this topograph is given in the following figure.

\begin{picture}(300,200)(-100,-20)

\put(130,120){\line(0,1){25}}
\put(70,120){\line(0,1){25}}
\put(100,100){\line(0,-1){36}}
\put(100,100){\line(3,2){30}}
\put(100,100){\line(-3,2){30}}
\put(70,120){\line(-3,-2){20}}
\put(130,120){\line(3,-2){20}}
\put(100,64){\line(-3,-2){20}}
\put(100,64){\line(3,-2){20}}

\put(85,80){$\bfe_1$} 
\put(110,80){$\bfe_2$} 

\put(85,120){$\bfe_1+\bfe_2$} 
\put(85,40){$\bfe_1-\bfe_2$} 

\put(25,125){$2\bfe_1+\bfe_2$} 
\put(140,125){$\bfe_1+2\bfe_2$}

%\put(100,100){\circle*{4}}
%\put(100,64){\circle*{4}}
%\put(70,120){\circle*{4}}
%\put(130,120){\circle*{4}}

\end{picture}

Edges and vertices form a 3-valent tree, denoted by $T$.  We can
assign values to the complementary regions of the topograph by
evaluating a quadratic form $f$ on lax vectors. For example, evaluating
the quadratic form $x^2+xy-y^2$ on lax vectors in the above figure
gives

\begin{picture}(300,200)(-100,-20)

\put(130,120){\line(0,1){25}}
\put(70,120){\line(0,1){25}}
\put(100,100){\line(0,-1){36}}
\put(100,100){\line(3,2){30}}
\put(100,100){\line(-3,2){30}}
\put(70,120){\line(-3,-2){20}}
\put(130,120){\line(3,-2){20}}
\put(100,64){\line(-3,-2){20}}
\put(100,64){\line(3,-2){20}}

\begin{thicklines} 
\put(130,120){\line(0,1){25}}
\put(100,100){\line(0,-1){36}}
\put(100,100){\line(3,2){30}}
\put(100,64){\line(-3,-2){20}}
\end{thicklines}

\put(85,80){$1$}
\put(110,80){$-1$}

\put(96,120){$1$}
\put(94,40){$-1$}

\put(55,125){$5$}
\put(140,125){$-1$}

%\put(100,100){\circle*{4}}
\put(100,64){\circle*{4}}
%\put(70,120){\circle*{4}}
\put(130,120){\circle*{4}}

\end{picture}

For a vertex $v$ define $inv(v)$ as the sum of the values of $f$ on
the three incident regions, and for an edge $e$ let $inv(e)$ denote the
sum of the values of $f$ on the two incident regions. Then the
parallelogram law
$$f(\bfe_1+\bfe_2)+f(\bfe_1-\bfe_2)=2~f(\bfe_1)+2~f(\bfe_2)$$ can be
restated conveniently as
$$inv(v)+inv(v')=4~inv(e)$$
whenever vertices $v,v'$ are connected by an edge $e$. 
It is now easy to fill in the values of $f$ around the topograph.
In particular, the form $f$ is determined by its values on a lax superbasis. 

Now assume that the form $f$ is indefinite and anisotropic. 
 Let $R\subseteq T$ be the union of all edges that
belong to the closures of two regions on which the form has opposite signs. 
Conway shows that $R$ is a tree geodesic (i.e. a line) and therefore
calls it a river.  

Some classical results on quadratic forms can be easily proved using
the tree.  For example, a well known result (Theorem 35 in \cite{Si})
says that if $f$ is an indefinite form of discriminant $D$ then there
exists a non-zero element $\bfv$ in $\bbZ^2$ such that
$|f(\bfv)|\leq\sqrt{\frac{D}{5}}$.  To see this, assume that $f$ takes
values $a$, $b$ and $c$ at a vertex of the tree.  Then one easily
checks that the discriminant of $f$ is
\[ 
D= a^2 + b^2+ c^2 -2ab-2bc -2ac. 
\] 
Furthermore, we can assume that $a,b>0$ and $c<0$ by picking the vertex to 
be on the river. Then $-2ab$ is the only negative term in the above 
expression for $D$. Since $a^2+b^2-2ab=(a-b)^2\geq 0$ we have 
\[ 
D\geq c^2-2bc-2ac \geq 5 s^2 
\] 
where $s$ is the minimum of $a, b$ and $-c$.

We shall now explain how the river can be used to compute the 
subgroup $SO(f)\subseteq \SL_2(\bbZ)$ 
preserving the form $f$.  The key observation is that
$SO(f)$ acts on the river, and that the values of $f$ along the edges of $R$ must be 
bounded. Indeed, if $a<0<b$ are the values of $f$ on the lax vectors 
corresponding to an edge in the river, then $D=4h^2-4ab\geq 4|ab|$ which implies 
that $|a|,|b|\leq D/4$. 
Thus the values of $f$ along $R$ are periodic.  
 Let $v$ and $v'$ be two vertices on $R$. Since $\SL_2(\mathbb Z)$ 
acts transitively on the vertices of $T$, there is 
$g\in \SL_2(\mathbb Z)$ such that $g(v)=v'$.  If $f(g(\bfv))=f(\bfv)$ for all three 
lax vectors in the superbasis corresponding to $v$, then $g\in SO(f)$. It follows that 
$SO(f)\cong \mathbb Z$ where a generator of $SO(f)$ acts on $R$ by translating by the period.

To motivate our arguments in Section \ref{s:main}, here is a brief
outline of a proof that the river $R$ is a line. We now view $T$ as a
subset of the hyperbolic plane $\mathbb H^2$. In the upper half plane
realization of $\mathbb H^2$, the spine $T$ is the $\SL_2(\bbZ)$-orbit
of the bottom arc of the standard fundamental domain for
$\SL_2(\mathbb Z)$.  One shows:

\begin{itemize}
\item $R$ is a 1-manifold. Indeed, at every vertex $v\in R$ there are three
  complementary regions, two of them have one sign, and the third has
  the other sign, so exactly two of the three edges incident to $v$
  belong to $R$. 
\item $SO(f)$ acts cocompactly on $R$. Since the values of $f$ along $R$ are bounded,  
 there are finitely many $SO(f)$-orbits of vertices in $R$, so $SO(f)$ acts cocompactly on $R$. 
\item $R$ consists of finitely many lines. Since $SO(f)$ acts discretely on $\mathbb H^1_f\subseteq \mathbb H^2$, 
the geodesic line connecting the two points on the circle at infinity where $f=0$, it follows that 
  $SO(f)\cong \Z$. Since $SO(f)$ acts cocompactly on $R$, it consists of finitely many lines.
\end{itemize}

It remains to show that $R$ is connected.
The nearest point projection $\pi:R\to\H^1_f$ is a proper map
  (since it is $SO(f)$-equivariant and $SO(f)$ acts on both spaces
  cocompactly). It follows that $R$ is contained in a Hausdorff
  neighborhood of $\H^1_f$. Choose a finite index subgroup $\Gamma\subset SO(f)$ that
  preserves each line in $R$. 
   Then the quotient $S=\H^2/\Gamma$ is an annulus,
  $\H^1_f/\Gamma$ is a geodesic circle in $S$, and for each component
  $L$ of $R$, $L/\Gamma$ is a circle in $S$ homotopic to
  $\H^1_f/\Gamma$. But there is a line in $S$ that intersects
  $R/\Gamma$ transversally in one point (obtained by projecting the
  union of two rays in $\H^2$ joining an interior point of an edge in
  $R$ with the corresponding two cusps), so $R$ is connected.

The reader may also show that the nearest point projection $\pi:R\to
\H^1_f$ is a homeomorphism. In view of properness of $\pi$ this is
equivalent to $\pi$ being locally injective. We will verify in Section
\ref{s:main} the analogous statement in one dimension higher.

\section{Arithmetic}

In this section we fix some basic notation and terminology.  Let $k$
be a quadratic imaginary extension of $\bbQ$ and $A\subseteq k$ the
maximal order (i.e. the ring of integers). Then every element $\alpha$
in $k$ satisfies
\[
x^{2}-tr(\alpha)x + N(\alpha)=0
\]
where $tr(\alpha)$ and $N(\alpha)$ are the trace and the norm of $\alpha$. 
Let $D<0$ be the discriminant of $A$. It is the determinant of the trace pairing 
on $A$, considered as a free $\bbZ$-module of rank 2. 
Then $k=\bbQ(\sqrt{D})$ and 
$A=\bbZ[\tau]$ where $\tau=\frac{D+\sqrt{D}}{2}$. The order $A$ is a 
Euclidean domain if and only if $D=-3,-4,-7, -8$ or $ -11$.  

\smallskip 

Let $V$ be a 2-dimensional space $V$ over $k$. A function 
 $f: V \rightarrow \mathbb Q$ is a (binary) hermitian form over $k$ if,
 after fixing a basis $( \bfu,\bfv )$ of $V$, it can be expressed
as
 \[
 f(x,y) = 
 \left(\begin{array}{cc} 
 x & y \\
 \end{array}\right) 
 \left(\begin{array}{cc} 
 a & \nu  \\
 \bar{\nu} & c
 \end{array}\right)
  \left(\begin{array}{c} 
 \bar{x} \\
 \bar{y} \\
 \end{array}\right) 
 \]
 where $a$ and $c$ are in $\bbQ$ and $\nu$ is in $k$.

 \begin{prop} \label{hermitian}
 A hermitian form $f$ is determined by values at four
 non-zero vectors provided that, viewed as elements in the projective
 line $\mathbb P^1(\bbC)$, they do not lie on a line or a circle.
 \end{prop}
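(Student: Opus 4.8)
The plan is to count real parameters. A binary hermitian form is given by a Hermitian matrix $\left(\begin{smallmatrix} a & \nu \\ \bar\nu & c\end{smallmatrix}\right)$ with $a,c\in\bbQ$ and $\nu\in k$, so it depends on four real parameters: $a$, $c$, and the two real coordinates of $\nu$. Evaluating $f$ at a single nonzero vector gives one real linear equation in these four parameters (linear because $f(x,y)=a|x|^2+2\Re(\nu\bar x y)+c|y|^2$ is an $\bbR$-linear functional of $(a,\Re\nu,\Im\nu,c)$). Thus four vectors give four linear equations, and the form is determined as soon as these four equations are linearly independent. So the whole statement reduces to showing that the genericity hypothesis—that the four points of $\mathbb P^1(\bbC)$ do not lie on a common line-or-circle—is exactly the condition guaranteeing linear independence.

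First I would make the projective-to-affine dictionary explicit. A nonzero vector $(x,y)$, up to scaling, corresponds to the point $z=x/y\in\mathbb P^1(\bbC)=\bbC\cup\{\infty\}$, and I would note that the vanishing locus $f=0$ of a hermitian form, viewed in $\mathbb P^1(\bbC)$, is precisely a line-or-circle (a ``clircle''): writing $z=x/y$ and dividing by $|y|^2$ turns $f(x,y)=0$ into $a|z|^2+2\Re(\nu\bar z)+c=0$, which is the general equation of a Euclidean circle when $a\neq 0$ and of a line when $a=0$, with the point $\infty$ (i.e. $y=0$) lying on it iff $a=0$. So the span of the four evaluation functionals degenerates exactly when there is a nonzero hermitian form $g$ vanishing at all four points, and such a $g$ vanishes at all four iff all four points lie on the single clircle $g=0$.

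The key step, then, is the equivalence: the four evaluation functionals are linearly dependent $\iff$ there exists a nonzero hermitian form vanishing at all four points $\iff$ all four points lie on a common clircle. The forward and middle implications are immediate from the parameter count; the content is that four points on a clircle force linear dependence and, conversely, linear dependence produces a clircle through them. I would handle this by the classical fact that three points of $\mathbb P^1(\bbC)$ in general position determine a unique clircle, so given four points no three of which are collinear-or-concyclic in a degenerate way, a nonzero $g$ vanishing on them is pinned down by three of them (three homogeneous linear conditions on the four-dimensional space of forms leave at least a one-parameter family, giving the clircle through those three) and then the fourth point lies on it precisely when the fourth equation is dependent on the first three.

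The main obstacle I expect is bookkeeping around the degenerate sub-cases rather than any real difficulty: one must check that three distinct points always admit a nonzero hermitian form vanishing on them (so the solution space of the three-point system is at least one-dimensional), and confirm that the resulting clircle is genuinely unique when the three points are distinct, so that ``lies on a line or a circle'' is an unambiguous condition. I would dispatch this by solving the $3\times 4$ homogeneous linear system for $(a,\Re\nu,\Im\nu,c)$ explicitly, observing it has nontrivial solutions by dimension, and identifying the solution with the standard clircle-through-three-points formula; the nondegeneracy hypothesis in the proposition is exactly what guarantees the fourth functional is not in the span of the first three, completing the argument that four such points determine $f$.
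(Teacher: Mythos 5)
Your argument is correct in substance but organized differently from the paper's. The paper also treats $f$ as four rational parameters with each evaluation a linear condition, but it first normalizes the four vectors by an element of $\PGL_2(k)$ to $(1,0)$, $(0,1)$, $(\alpha,1)$, $(\beta,1)$; the first two values then hand you $a$ and $c$ outright, and the remaining two give a $2\times 2$ linear system in $\nu,\bar\nu$ whose determinant is $\alpha\bar\beta-\bar\alpha\beta$, nonzero exactly when $0,\infty,\alpha,\beta$ are not collinear --- which is the line-or-circle condition transported by the Möbius map. Your version skips the normalization and instead identifies the obstruction intrinsically: the four evaluation functionals on the $4$-dimensional space of forms are dependent iff some nonzero hermitian form vanishes at all four points, and the zero locus of such a form is a line or circle. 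That buys a cleaner conceptual explanation of \emph{why} the clircle condition is the right one (a clircle is literally the zero set of a hermitian form), at the cost of the paper's completely explicit solvability computation.

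One point you must make explicit, since as written it is false: the vanishing locus of a nonzero hermitian form in $\mathbb P^1(\bbC)$ is \emph{not} always a line or circle --- it is empty for a definite form and a single point for a rank-one form; only for an indefinite nondegenerate form is it a genuine clircle. Your key equivalence therefore needs the observation that a nonzero form vanishing at the four given points is forced to be indefinite and nondegenerate. This holds because the hypothesis (no clircle contains all four points) forces the four points to be pairwise distinct (any three or fewer distinct points always lie on \emph{some} clircle), while a definite form vanishes nowhere and a rank-one form vanishes at exactly one point. With that inserted, the implication you actually need --- linear dependence of the functionals implies the four points lie on the clircle $\{g=0\}$ --- is airtight; the converse direction and the uniqueness of the clircle through three points, which occupy your last paragraph, are not required for the proposition as stated.
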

 \begin{proof} Assume that the form $f$ is represented by the matrix 
 $\left(\begin{smallmatrix}a&\bar\nu\\
\nu&c\end{smallmatrix}\right)$.
 Using the action of $\PGL_{2}(k)$ we can assume that the four vectors are 
 \[
 \left(\begin{array}{c} 
 1 \\
 0 \end{array}\right),
 \left(\begin{array}{c} 
 0 \\
 1 \end{array}\right), 
 \left(\begin{array}{c} 
 \alpha \\
 1 \end{array}\right), 
 \text{ and }
 \left(\begin{array}{c} 
 \beta \\
 1 \end{array}\right).  
 \]
 The values of $f$ on these four vectors are $a$, $c$,
 $aN(\alpha)+c+tr(\alpha\nu)$ and $aN(\beta)+ c +tr(\beta\nu)$. Then
 one can solve for
 $\nu,\overline\nu$ provided that
 $\det\left(\begin{smallmatrix}\alpha&\overline\alpha\\
\beta&\overline\beta\end{smallmatrix}\right)\neq 0$. The condition then is
that $\alpha\overline\beta\neq \overline\alpha\beta$, i.e. that $\frac
{\alpha}{\beta}$ is not real, which is equivalent to saying that
$0,\alpha,\beta,\infty$ are not on the same line.
 \end{proof} 
 
 Let $f$ be a hermitian form on $V$. Let $(\bfu,\bfv)$ be a basis of $V$, and 
 $\left(\begin{smallmatrix}a&\nu\\
\overline\nu&c\end{smallmatrix}\right)$ the matrix of $f$ in this basis. The rational number 
\[
 \Delta=D(ac-N(\nu))
 \]
 depends on the choice of the basis, but its class in $\bbQ^{\times}/N(k^{\times})$ does not. 
  Note that $\Delta<0$ for definite forms, while $\Delta>0$ for indefinite forms, 
  by our normalization.  
  The number $\Delta$ (rather, its class in $\bbQ^{\times}/N(k^{\times})$)
  is called the discriminant of the form $f$. This notion can be refined for integral forms. 
  More precisely, fix an $A$-lattice $M$ in $V$. We say that the form $f$ is {\em integral}
  if it takes integral values on the lattice $M$. If $M$ is spanned by $(\bfu,\bfv )$ then 
  $a$ and $c$ are integers, while $tr(\nu\cdot \gamma)\in \mathbb Z$ for all $\gamma\in A$, i.e. 
  $\nu\in A^*$, the dual lattice. Recall that $[A^*:A]=|D|$.  It follows that 
  $|D|\cdot N(\nu)=[A^*:(\nu)]$ is an integer for every $\nu$ in $A^*$. Thus  the number 
 $\Delta=D(ac-N(\nu))$ is an integer which is independent of the choice of a basis in $M$. 
   
  Given $\Delta \in \bbQ^{\times}$ let $\left(\frac{D,\Delta}{\bbQ}\right)$ be  the quaternion algebra over $\bbQ$ 
  consisting of all pairs $(x,y)\in k^2$ where 
  addition is defined coordinate-wise and multiplication by 
  \[ 
  (x,y)\cdot (z,w) =(xz+\Delta w\bar{y}, zy+\bar{x}w).
  \] 
  The algebra $\left(\frac{D,\Delta}{\bbQ}\right)$ has the norm $N(x,y)=N(x)-\Delta N(y)$. We have the following 
  (see \cite{MR}). 
  
   \begin{prop} Let $f$ be a binary hermitian form over $k$ with the discriminant $\Delta$. 
 If $\Delta <0$, i.e. $f$ is definite, assume that $f$ 
 is positive definite. Then $f$ is equivalent to the
 norm form of the quaternion algebra
 $\left(\frac{D,\Delta}{\bbQ}\right)$.
   \end{prop}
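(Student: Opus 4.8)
The plan is to pass through the explicit model of the quaternion algebra and then invoke the classification of binary hermitian forms over $k$ by their elementary invariants.

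First I would read off the norm form directly from the defining multiplication. Writing $j=(0,1)$ and identifying $k$ with $\{(x,0)\}$ inside $B=\left(\frac{D,\Delta}{\bbQ}\right)$, the multiplication rule gives $j^2=\Delta$ and $jz=\bar z\,j$ for $z\in k$, so that $B=k\oplus kj$ is a $2$-dimensional right $k$-vector space with basis $\{1,j\}$. In these coordinates the norm reads $N_B(z+wj)=N(z)-\Delta N(w)$, which is exactly the quadratic form attached to the diagonal binary hermitian form with matrix $\left(\begin{smallmatrix}1&0\\0&-\Delta\end{smallmatrix}\right)$. Thus, as a hermitian form over $k$, the norm form of $B$ is $\langle 1,-\Delta\rangle$, and it suffices to prove $f\cong\langle 1,-\Delta\rangle$ over $k$; passing to the underlying $\bbQ$-quadratic forms then yields the stated equivalence with $N_B$.

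Next I would compare invariants. Both $f$ and $\langle 1,-\Delta\rangle$ are nondegenerate of rank $2$. For the discriminant I use the Gram determinant $ac-N(\nu)\in\bbQ^{\times}/N(k^{\times})$; since $\Delta=D(ac-N(\nu))$ and $-D=N(\sqrt D)$ lies in $N(k^{\times})$, we get $[ac-N(\nu)]=[-\Delta]$ in $\bbQ^{\times}/N(k^{\times})$, which is precisely the discriminant of $\langle 1,-\Delta\rangle$. For the archimedean invariant, note that $k$ has a single (complex) place above the real place of $\bbQ$, so a binary hermitian form carries one signature there: when $\Delta>0$ both forms are indefinite of signature $(1,1)$, and when $\Delta<0$ the hypothesis that $f$ is positive definite matches the signature $(2,0)$ of $\langle 1,-\Delta\rangle$ (this is exactly where that hypothesis is needed, the negative definite case being isometric instead to $-\langle 1,-\Delta\rangle$). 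With rank, discriminant and signature agreeing, Landherr's classification of hermitian forms over a number field --- for which, at each finite place, rank and discriminant already form a complete set of invariants and the Hasse principle holds --- gives an isometry $f\cong\langle 1,-\Delta\rangle$ over $k$.

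The main obstacle is really the bookkeeping that makes the classification applicable rather than any single hard estimate: one must (i) verify carefully that the Gram determinant, and not $\Delta$ itself, is the invariant entering the classification, reconciling the spurious factor $D$ via $-D\in N(k^{\times})$, and (ii) isolate the one archimedean signature as the only non-algebraic invariant, which is what pins down the positive-definite hypothesis. Once these are in place the conclusion is immediate; alternatively, one could avoid citing Landherr's theorem by diagonalizing $f$ over $k$ and computing the Hasse invariants of the resulting quaternary $\bbQ$-quadratic form prime by prime, but that route is longer and less transparent.
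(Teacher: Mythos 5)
Your proof is correct, but it takes a genuinely different route from the paper's. You first identify the norm form of $\left(\frac{D,\Delta}{\bbQ}\right)$ with the diagonal hermitian form $\langle 1,-\Delta\rangle$ and then appeal to Landherr's classification of hermitian forms over number fields, matching rank, the Gram-determinant discriminant in $\bbQ^{\times}/N(k^{\times})$ (your reconciliation of the factor $D$ via $-D=N(\sqrt D)\in N(k^{\times})$ is exactly right and is the one place where a sign error would be easy to make), and the single archimedean signature, which is where the positive-definiteness hypothesis enters. The paper instead argues directly: it views $f$ as a quaternary rational quadratic form, observes that under the hypotheses the quinary form $f-x_5^2$ is indefinite and hence isotropic by Hasse--Minkowski, so $f$ represents $1$; choosing a basis in which $a=1$ and completing the square gives $f(x,y)=N(x+\nu y)+(c-N(\nu))N(y)$, and the substitution $y'=y/z$ with $N(z)=-D$ turns the second coefficient into $-\Delta$, landing exactly on the norm form. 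The trade-off is clear: your argument leans on a heavier classification theorem but makes the invariant bookkeeping transparent and would generalize immediately to higher rank; the paper's argument is self-contained modulo Hasse--Minkowski and produces an explicit change of basis realizing the equivalence. Both establish equivalence over $k$, which is the intended meaning of the statement.
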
 
 \begin{proof} 
 We can view $f$ as a rational quadratic form in 4 variables. 
 By our assumption the quintic form 
 $f-x_5^2$ is indefinite. By the Hasse-Minkowski Theorem such a form represents 
 $0$. In particular, there 
 exists an element $\bfu\in V$ such that $f(\bfu)=1$. 
Thus, we can find a basis $(\bfu,\bfv)$ in $V$ such that the matrix of $f$ is 
$\left(\begin{smallmatrix}1&\nu\\
\overline\nu&c\end{smallmatrix}\right)$. 
  Then 
 \[
 f(x,y)=N(x+\nu y) + (c-N(\nu))N(y).    
 \]
 Let $z$ be in $k$ such that $N(z)=-D$. Then $f(x',y')=N(x')-\Delta N(y')$ where 
 $x'=x+\nu y$ and $y'=y/z$. The proposition is proved.  
 \end{proof}
 
 Note that the form $f$ is anisotropic if and only if 
 $\left(\frac{D,\Delta}{\bbQ}\right)$ is a division algebra, and this holds if and only 
 if  the Hilbert symbol  $(D, \Delta)_p$ is non-trivial for at least one prime $p$.

\section{Hyperbolic space and Spine} \label{Spine}

In this section we define Mendoza's $\GL_2(A)$-equivariant 
retract of $\mathbb H^3$ \cite{Me} and establish some of its basic properties. 

Let $V$ be the space of real $2\times 2$ hermitian symmetric matrices
and $P\subseteq V$ the cone of positive definite matrices. The subset
of $P$ defined by the equation $\det(x)=1$, where $x$ is in $P$, is a
realization of the 3-dimensional hyperbolic space $\mathbb H^3$. Note
that the boundary points of $\mathbb H^3$ correspond to
rays on the boundary of the cone $P$.

 The group $\GL_2(\bbC)$ acts on $V$ by $g(x)=gxg^{\ast}$, where $g\in \GL_2(\bbC)$, 
$x\in V$ and $g^{\ast}$ is the conjugate-transpose of $g$.  Let $G$ be the subgroup of $\GL_2(\bbC)$ 
consisting of all  $g$ such that $\det(g)$ is a complex number of norm 1. 
The group $G$ preserves the quadratic form $\det(x)$ on $V$. In 
particular $G$ acts naturally on $\mathbb H^3\subseteq P$. 
Note that $\GL_2(A)$ is contained in $G$.

Let $\mathbb P(k)=k\cup\{\infty\}$ be the projective line over $k$.
Let $I\subseteq A$ be a non-zero ideal. Let $\mathbb P(I)$ be the set
of equivalence classes of pairs $(a,b)\in I^{2}$ (the equivalence
relation is defined by multiplication by $A^{\times}$) such that $a$
and $b$ generate the ideal $I$. We have an obvious injection of
$\mathbb P(I)$ into $\mathbb P(k)$, and the image is independent of
the choice of $I$ in an ideal class. Let $I_{1}, \ldots ,
I_{h}\subseteq A$ be representatives of all ideal classes. We have a
disjoint union (into $\GL_{2}(A)$-orbits)
\[
\mathbb P(I_{1}) \cup \ldots \cup \mathbb P(I_{h})= \mathbb P(k). 
\]
Indeed, if $\alpha=\frac{a}{b}\in k$, where $a,b\in A$, 
let $I$ be the ideal generated by $a$ and $b$. The ideal class of $I$ is obviously independent 
of the choice of $a$ and $b$. Then $\alpha$ is in the image of  
$(a,b)\in \mathbb P(I)$.
Bianchi \cite{Bi} has shown that each $\mathbb P(I)$ is one $\GL_{2}(A)$-orbit.  
Elements in $\mathbb P(k)$ are also called cusps for $\GL_{2}(A)$. 
There is a natural $\GL_2(A)$-equivariant map
$\alpha\rightarrow x_{\alpha}$  from $\mathbb P(k)$ to the 
boundary of the cone $P$ defined by 
\[
x_{\alpha}= \frac{1}{N(I)} 
 \left(\begin{array}{cc} 
 N(a) & a\bar b \\
 \bar a b & N(b)
 \end{array}\right)
\] 
where $N(I)$ is the norm of the ideal generated by $a$ and $b$. 

Let $\langle x,y \rangle$ be the symmetric bilinear form on $V$ such that 
$\langle x, x\rangle =\det(x)$. Define the distance from the cusp $\alpha$ to 
$w\in \mathbb H^3$ by $d_{\alpha}(w)=\langle x_{\alpha}, w\rangle$.  Note that 
$d_{g(\alpha)}(g(w))=d_{\alpha}(w)$ for any $g$ in $\GL_2(A)$ 
since $\langle x,y \rangle$ is $\GL_2(A)$-invariant. 

Let $w=(z,\zeta)\in \bbC \times \bbR^{+}$ in the upper-half space model of the hyperbolic space. 
The distance $d_{\alpha}(w)$ is given by 
\[
d_{\alpha}(w)=\frac{N(b)}{N(I)} \cdot \frac{|z-\alpha|^{2}+\zeta^{2}}{\zeta }
\]
if $\alpha\neq \infty$ and $d_{\infty}(w)=1/\zeta$. (Geometrically,
this represents the exponential of the signed hyperbolic distance between $w$
and a suitable horosphere centered at $\alpha$. This formula is due to
Siegel \cite{Si2}.)
Let $t>0$. The set 
\[ 
B_{\alpha}(t)= \{ w\in \mathbb H^3 | d_{\alpha}(w)\leq t\} 
\] 
is called a horoball. If $\alpha\neq \infty$ then $B_{\alpha}(t)$ is a 
Euclidean 3 dimensional ball 
of radius $tN(I)/2N(b)$  touching the boundary of $\mathbb H^3$ at $\alpha$. 

 If we fix $t$ and an ideal $I$, then $\GL_{2}(A)$  acts transitively on 
horoballs for $\alpha\in \mathbb P(I)$.  For every cusp $\alpha$ define 
\[
H_{\alpha}= \{ w\in \mathbb H^{3} ~|~ d_{\alpha}(w)\leq d_{\beta}(w) 
\text{ for all } \beta \in \mathbb P(k)\}. 
\]
The spine $X$ is defined  by 
\[
X=\cup_{\alpha\neq \beta} H_{\alpha}\cap H_{\beta}
\]
where the union is taken over all distinct pairs of cusps.

Spine is a two dimensional cell-complex.  Its 2-cells consist of
points equidistant to two cusps, with no other cusps being
closer. Since all points in $\mathbb H^3$ equidistant to two cusps
form a hyperbolic subspace $\mathbb H^2$ (a Euclidean hemisphere
centered at a point in $\bbC$, or a Euclidean half-plane vertical to
$\bbC$), each cell is a convex polygon in $\mathbb H^2$.  The radial
projection from $\alpha$ is a homeomorphism between the boundary of
$B_{\alpha}(t)$ and the boundary of $H_{\alpha}$. Thus the Euclidean
metric on the boundary of $B_{\alpha}(t)$ induces a Euclidean metric
on the boundary of $H_{\alpha}$. In this way $X$ becomes a CAT(0)
complex. See \cite{iain}, based on the work of Rivin \cite{rivin}.

\setlength\fboxsep{0pt}
\setlength\fboxrule{1pt}

\begin{figure}[H]
\begin{center}
\fbox{\includegraphics[scale=0.25]{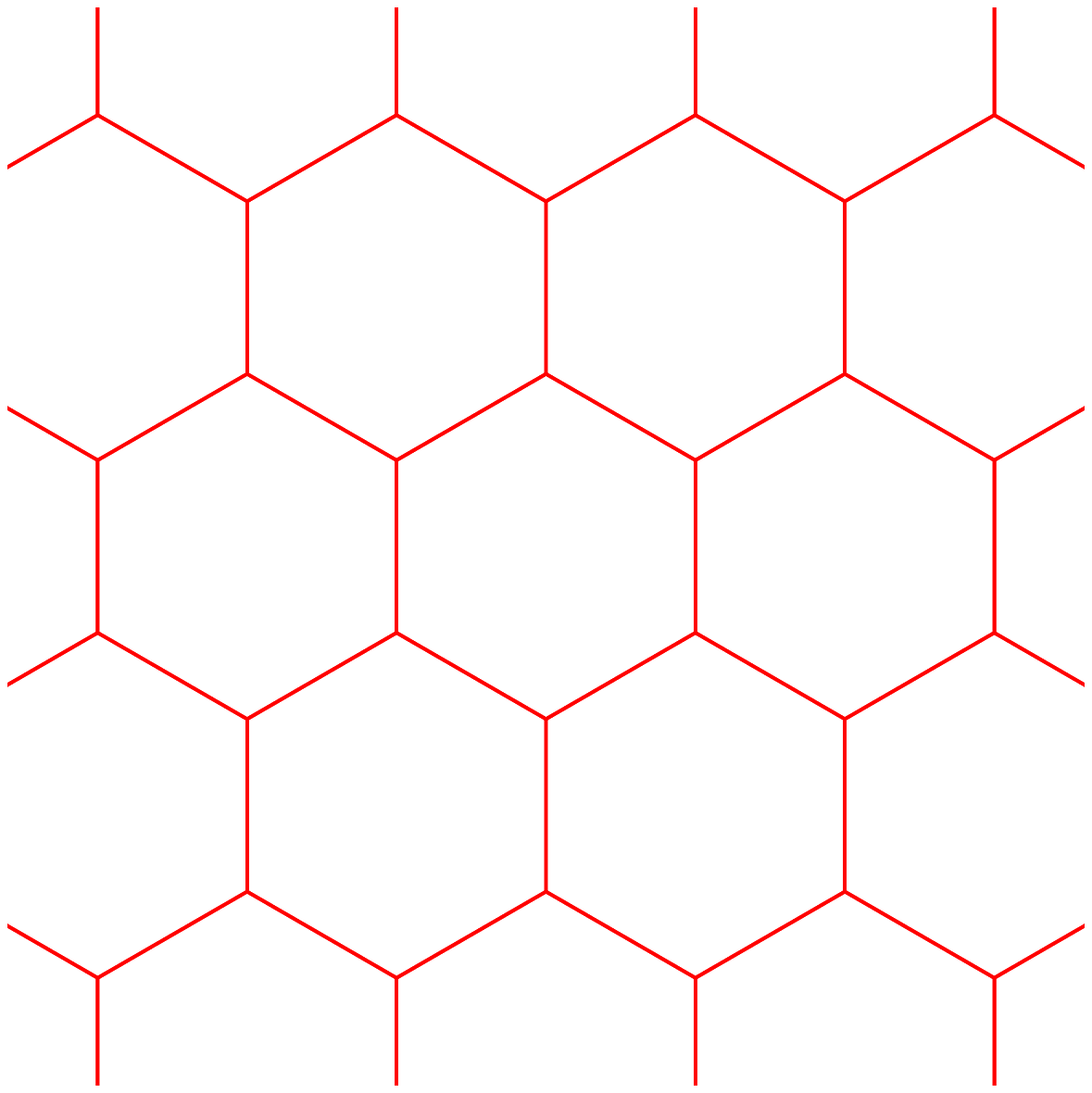}}\hskip 2pt\fbox{\includegraphics[scale=0.25]{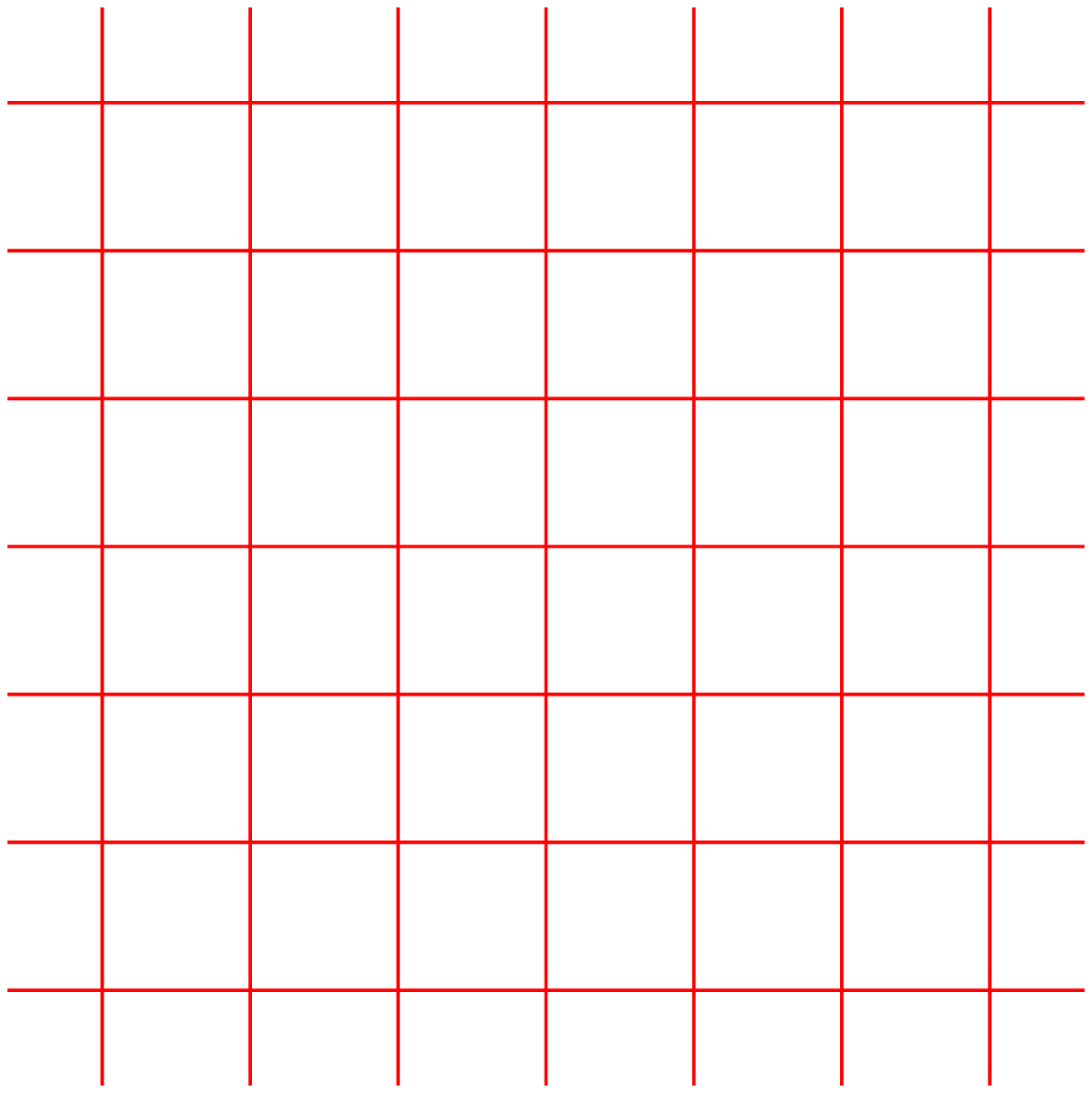}}\hskip 2pt\fbox{\includegraphics[scale=0.25]{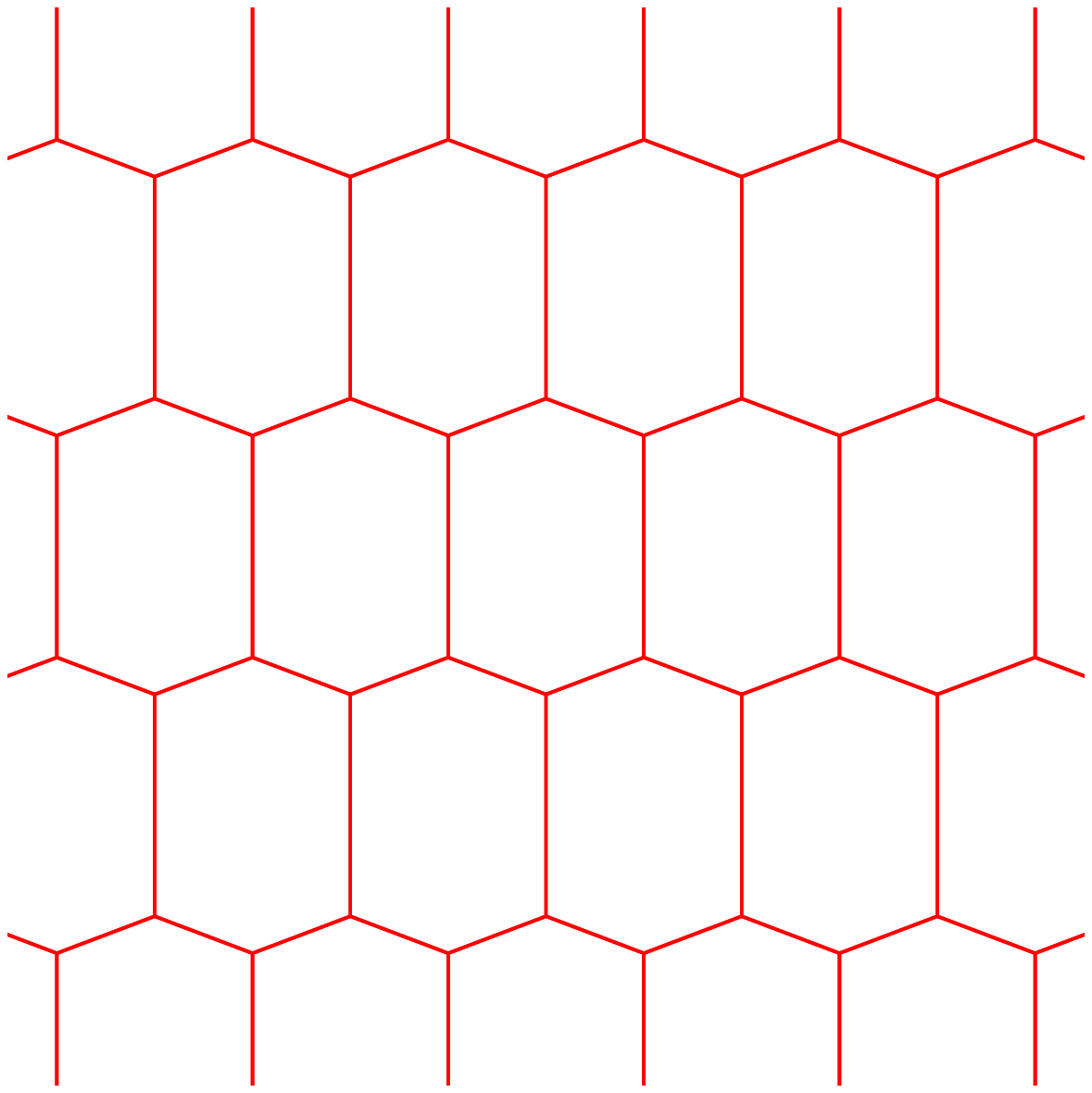}}\hskip 2pt\fbox{\includegraphics[scale=0.25]{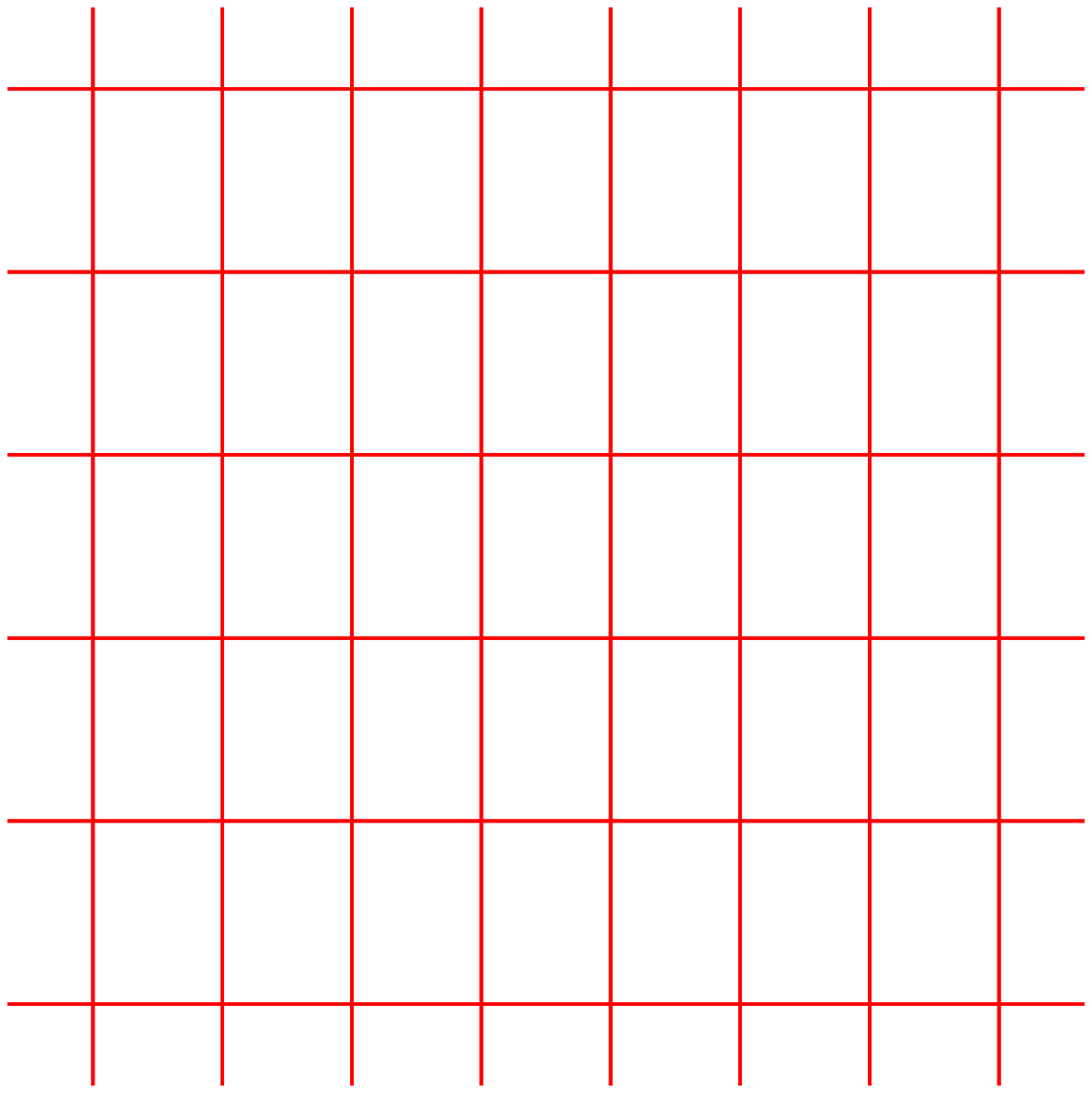}}\hskip 2pt\fbox{\includegraphics[scale=0.25]{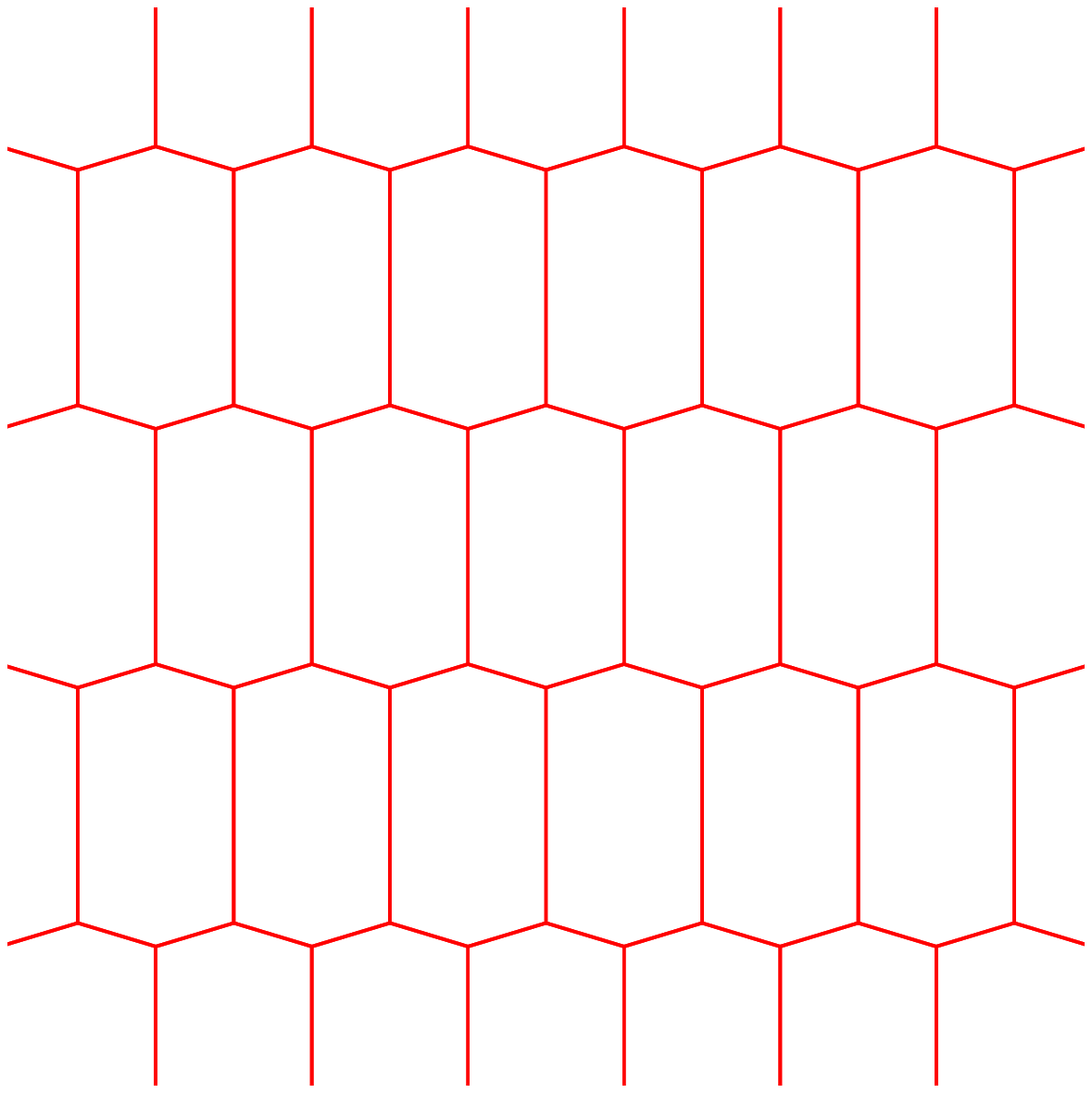}}
\end{center}
\caption{The tiled horospheres for the case of Euclidean domains,
  $-D=3,4,7,8,11$} 
\end{figure}

\begin{figure}[H]
\begin{center}
\fbox{\includegraphics[scale=0.4]{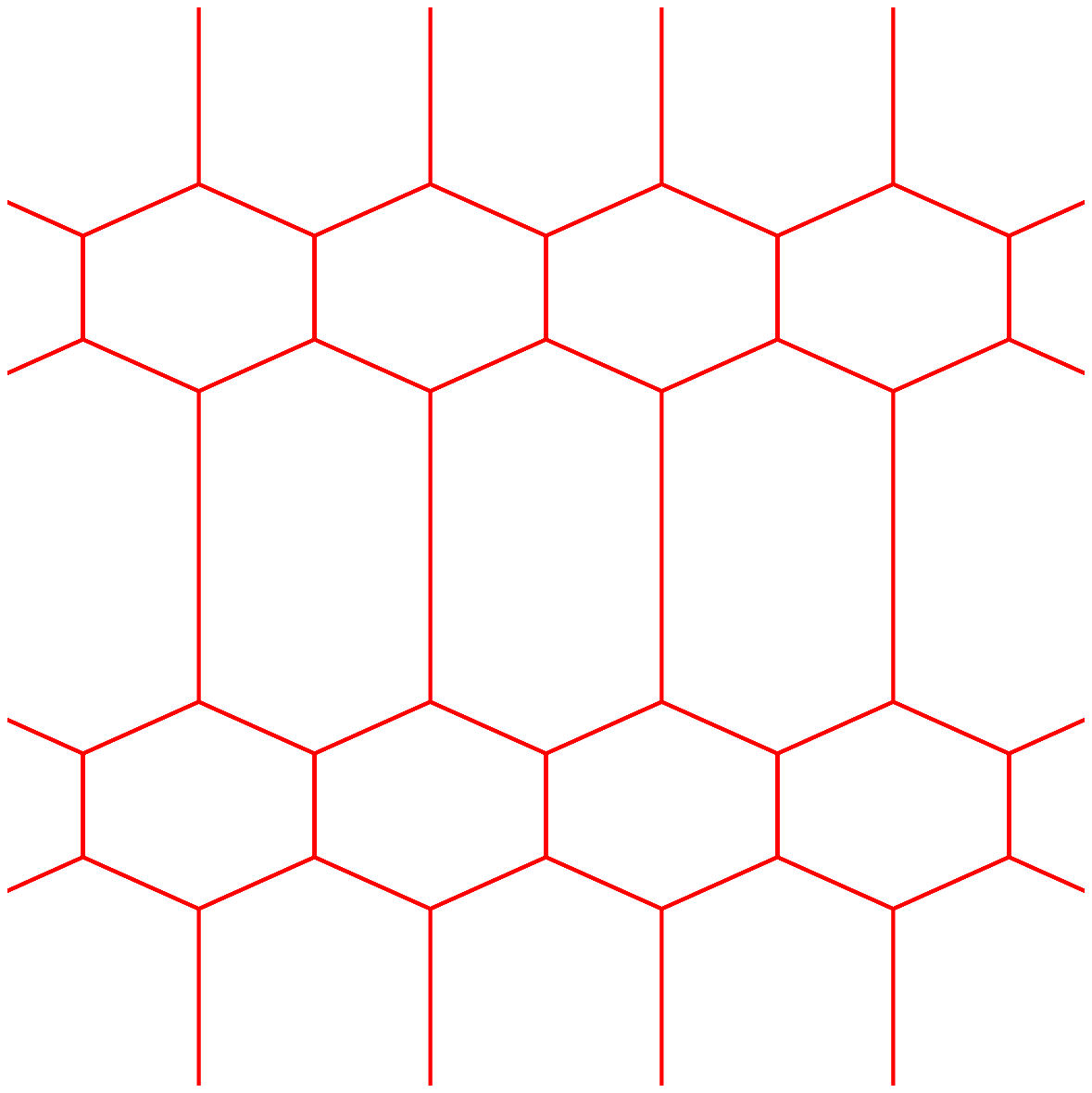}}\hskip 2pt\fbox{\includegraphics[scale=0.4]{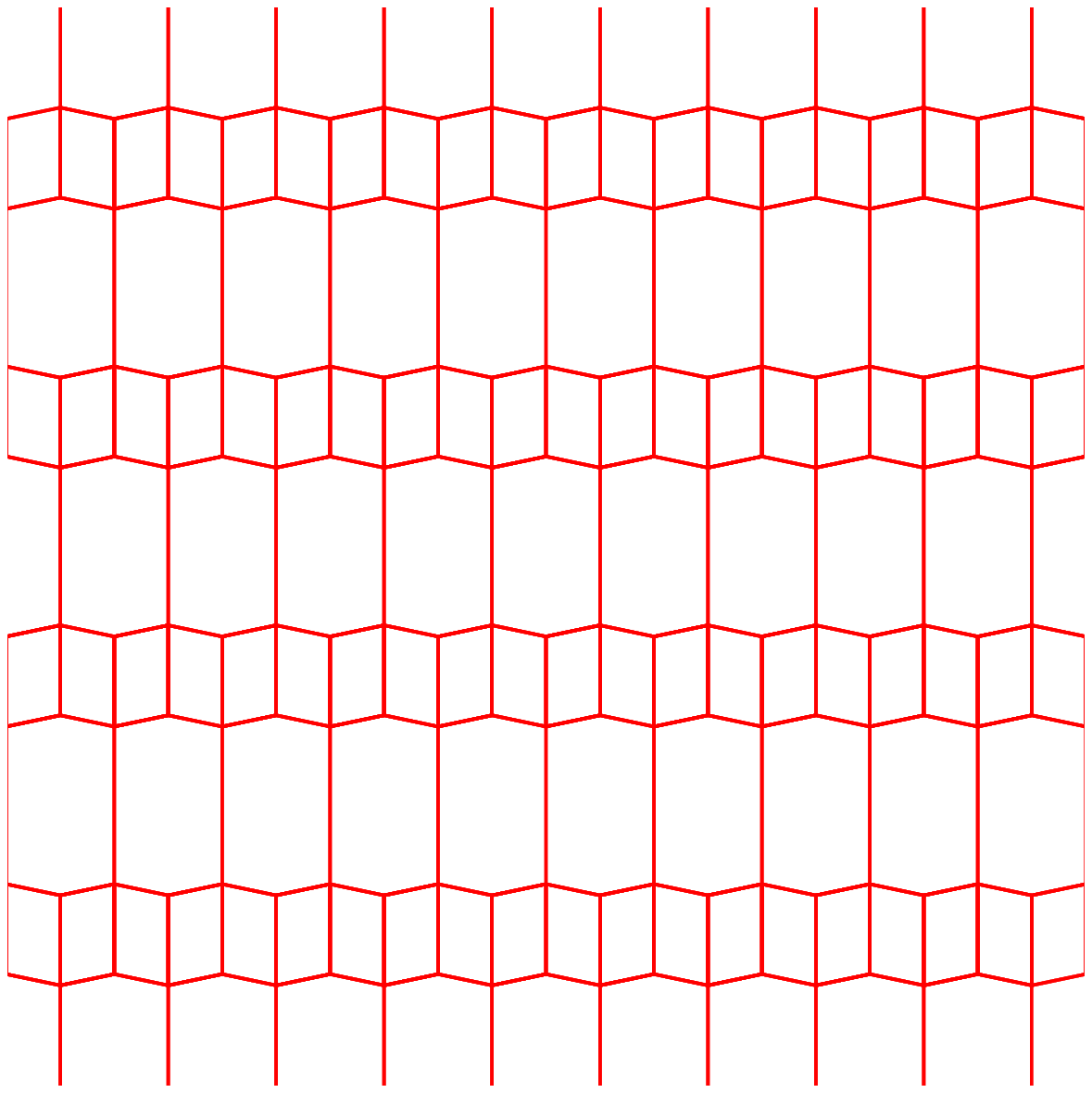}}\hskip 2pt\fbox{\includegraphics[scale=0.4]{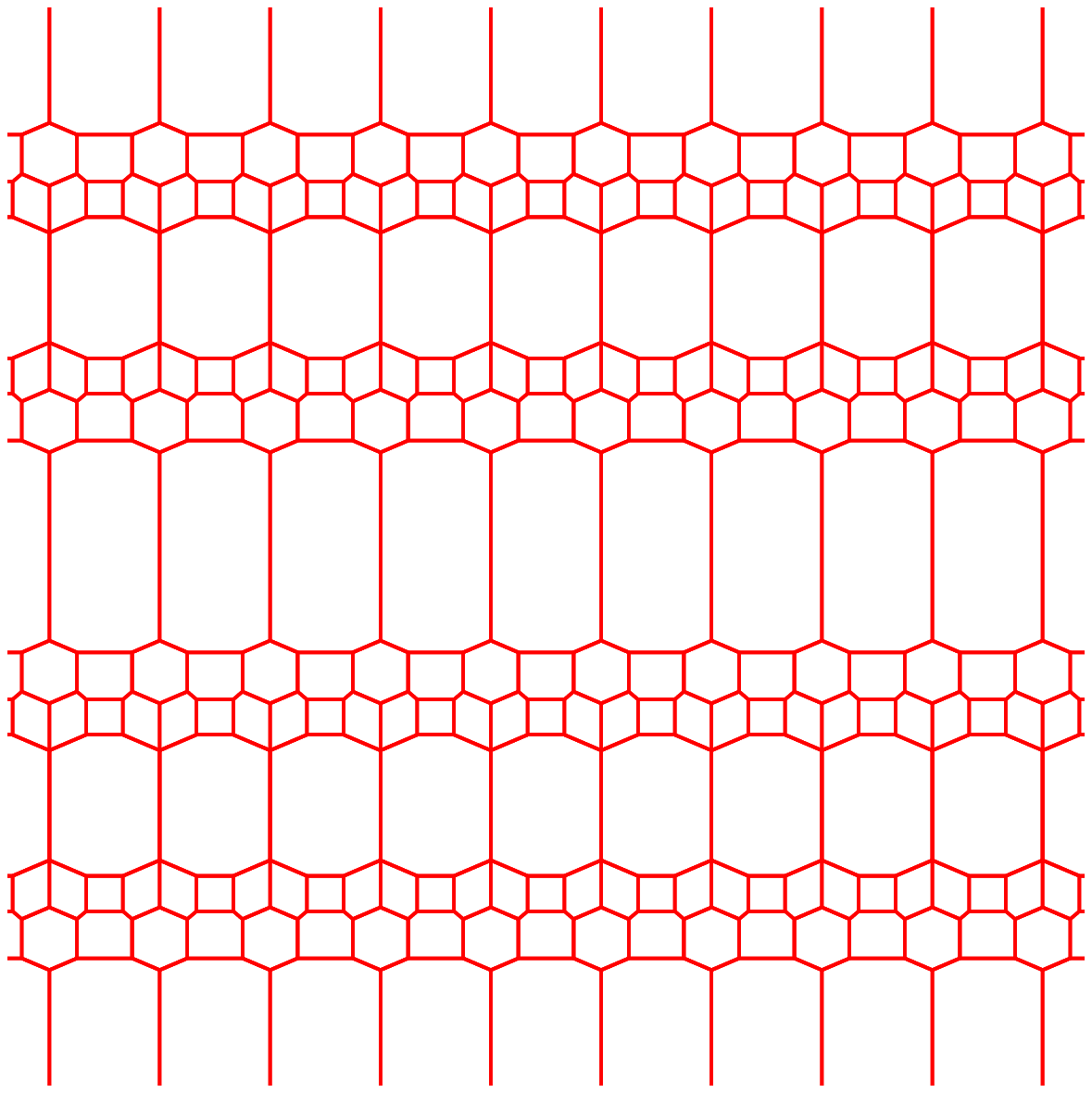}}
\end{center}
\caption{The tiled horospheres at the infinite cusp for the cases $-D=20,23,88$}
\end{figure}

\begin{prop} \label{firstkiss}
Let $\alpha=a/b$ and $\beta=c/d$. Let $I$ be the ideal generated 
by $a$ and $b$, and $J$ the ideal generated by $c$ and $d$. Then the horoballs 
$B_{\alpha}(1)$ and $B_{\beta}(1)$  are either disjoint or touch at one point. 
They touch if and only if $IJ$ is the principal ideal generated by $ad-bc$. 
\end{prop}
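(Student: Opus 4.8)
The plan is to reduce the statement to the elementary Euclidean geometry of two balls tangent to the boundary plane $\bbC$ of the upper half-space model, and then to translate the resulting inequality into ideal theory. First I would use the description recalled just above the proposition: for a finite cusp $\alpha=a/b$ the horoball $B_{\alpha}(1)$ is the Euclidean ball tangent to $\bbC$ at $\alpha$ of radius $r_{\alpha}=N(I)/(2N(b))$, hence with Euclidean center $(\alpha,r_{\alpha})\in\bbC\times\bbR^{+}$; likewise $B_{\beta}(1)$ has radius $r_{\beta}=N(J)/(2N(d))$ and center $(\beta,r_{\beta})$. Two balls tangent to $\bbC$ at distinct points are never nested, so they are disjoint, externally tangent, or overlapping according to whether the Euclidean distance between their centers is greater than, equal to, or less than $r_{\alpha}+r_{\beta}$. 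Writing this distance as $\sqrt{|\alpha-\beta|^{2}+(r_{\alpha}-r_{\beta})^{2}}$ and squaring, the three cases correspond to $|\alpha-\beta|^{2}$ being greater than, equal to, or less than $4r_{\alpha}r_{\beta}$; in particular tangency occurs exactly when $|\alpha-\beta|^{2}=4r_{\alpha}r_{\beta}$. The cusp $\infty$ can be treated by the analogous direct computation (there $B_{\infty}(1)=\{\zeta\ge 1\}$), or avoided altogether by moving both cusps off $\infty$ with an element of $\GL_2(A)$, which permutes the horoballs and fixes both $IJ$ and the ideal $(ad-bc)$ (the latter because $\det g\in A^{\times}$).

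Next I would substitute the explicit quantities. Since $\alpha-\beta=(ad-bc)/(bd)$, we get $|\alpha-\beta|^{2}=N(ad-bc)/(N(b)N(d))$, while $4r_{\alpha}r_{\beta}=N(I)N(J)/(N(b)N(d))=N(IJ)/(N(b)N(d))$ by multiplicativity of the ideal norm. Clearing the common denominator, the tangency condition $|\alpha-\beta|^{2}=4r_{\alpha}r_{\beta}$ becomes exactly $N(ad-bc)=N(IJ)$, disjointness becomes $N(ad-bc)>N(IJ)$, and the overlapping case would be $N(ad-bc)<N(IJ)$.

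Finally I would supply the arithmetic input that rules out overlapping and identifies the tangency case. Because $a,b$ generate $I$ and $c,d$ generate $J$, each of $ac,ad,bc,bd$ lies in $IJ$, so $ad-bc\in IJ$ and hence $(ad-bc)\subseteq IJ$, i.e. $IJ$ divides the principal ideal $(ad-bc)$. Taking norms gives $N(IJ)\mid N(ad-bc)$, and in particular $N(ad-bc)\ge N(IJ)$ (note $ad-bc\neq 0$ since $\alpha\neq\beta$). This shows the overlapping case never occurs, so the two horoballs are always disjoint or tangent. Moreover $N(ad-bc)=N(IJ)$ holds if and only if the inclusion $(ad-bc)\subseteq IJ$ is an equality, that is, if and only if $IJ=(ad-bc)$ is the principal ideal generated by $ad-bc$, which is the asserted criterion. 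The only place requiring genuine thought — the main obstacle — is this last step: one must observe that $ad-bc\in IJ$ and that for integral ideals the containment $(ad-bc)\subseteq IJ$ forces $N(ad-bc)\ge N(IJ)$ with equality precisely at ideal equality; everything else is a routine computation with tangent spheres.
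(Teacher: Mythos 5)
Your proposal is correct and follows essentially the same route as the paper: compare $|\alpha-\beta|^2$ with $4r_\alpha r_\beta$ to characterize tangency, rewrite this as $N(ad-bc)$ versus $N(IJ)$, and use $ad-bc\in IJ$ to get $N(ad-bc)\ge N(IJ)$ with equality exactly when $IJ=(ad-bc)$. The only differences are that you spell out the tangent-sphere computation and the treatment of the cusp $\infty$ in more detail than the paper does.
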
 
\begin{proof} 
Let $r_\alpha= N(I)/2N(b)$ and $r_\beta=N(J)/2N(d)$ be Euclidean radii of 
the horoballs $B_{\alpha}(1)$ and $B_{\beta}(1)$ . As one easily checks, 
these two horoballs have a non-empty intersection if and only if 
\[
d(\alpha, \beta)^2 \leq 4 r_\alpha r_\beta, 
\] 
where $d(\alpha,\beta)$ is the Euclidean distance 
between $\alpha$ and $\beta$. The equality holds precisely when the two 
horoballs touch. Since $d(\alpha,\beta)=\frac{|ad-bc|}{|bd|}$, the 
inequality can be rewritten as
$N(ad-bc) \leq N(I)N(J) =N(IJ)$. On the other hand, $ad-bc$ is an element of 
the ideal $IJ$ and, therefore, $N(ad-bc) \geq N(IJ)$. 
The equality holds if and only if $IJ$ is generated by $ad-bc$. 
Proposition follows. 
\end{proof} 

Let $\partial H_{\alpha}$ denote the boundary of  $H_{\alpha}$ 
where $\alpha=a/b$. Let $I$ be the ideal generated by $a$ and
$b$. Let $J$ be the inverse of $I$. Then there exists $c$ and $d$ in
$J$ such that $ad-bc=1$. Note that this identity implies that $J$ is
generated by $c$ and $d$.  Let $\beta=c/d$.  Then by Proposition
\ref{firstkiss}, $\{\alpha,\beta \}$ parameterizes a cell in 
 $\partial H_{\alpha}$.  One easily checks that the linear
map defined by
\[
L
\left(\begin{array}{c} 
 a  \\
 b 
 \end{array}\right) =
\left(\begin{array}{c} 
 a \\
 b 
 \end{array}\right)  \text{ and } 
L\left(\begin{array}{c} 
 c  \\
 d
 \end{array}\right) 
 = -
\left(\begin{array}{c} 
 c  \\
 d 
 \end{array}\right) 
 \]
 is in $\GL_2(A)$. In particular, $L$ preserves the spine $X$.  
 Note that the Poincar\'e extension of $L$ to $\mathbb H^3$ is a 
$180^{\circ}$ rotation about the geodesic line connecting the two cusps. Thus 
$L$ induces a central symmetry of the cell. The center is the point
 where the two horoballs touch.

 In particular, there is a cell in 
$\partial H_\infty$ consisting of points equidistant to $\infty$ and $0$. We 
shall call this the {\em fundamental} cell and denote it by $\Sigma$. 

\begin{lemma} \label{reduction}
 Let $e$ be the edge of the fundamental cell $\Sigma$ which consists of 
points equidistant to $\infty$, $0$ and $\alpha=a/b$, where $a,b\in A$. Let $I$ be the ideal 
in $A$ generated by $a$ and $b$. Then 
\[ 
N(a), N(b) \leq  N(I)\frac{|D|}{2}
\]  
and the projection of the edge $e$ on the boundary 
$\bbC$ lies on the line $tr(z\bar{a}b) = N(a)+N(b)-N(I)$. 
\end{lemma}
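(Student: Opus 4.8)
The plan is to place the three cusps in the upper half–space model and read off both conclusions from the two defining equidistance equations of $e$. Write a point of $e$ as $w=(z,\zeta)$ with $z\in\bbC$, $\zeta>0$. Equidistance of $w$ to $\infty$ and $0$ gives $d_\infty(w)=d_0(w)$, i.e. $|z|^2+\zeta^2=1$, while equidistance to $\infty$ and $\alpha$ gives $d_\infty(w)=d_\alpha(w)$, i.e.
\[
|z-\alpha|^2+\zeta^2=\frac{N(I)}{N(b)}.
\]
Subtracting the first relation from the second and multiplying through by $N(b)$ (using $|\alpha|^2N(b)=N(a)$ and $\mathrm{Re}(z\bar\alpha)\,N(b)=\mathrm{Re}(z\bar a b)$) yields $2\,\mathrm{Re}(z\bar a b)=N(a)+N(b)-N(I)$. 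Since for the complex number $\xi=z\bar a b$ one has $tr(\xi)=\xi+\bar\xi=2\,\mathrm{Re}(\xi)$, this is exactly the asserted line $tr(z\bar a b)=N(a)+N(b)-N(I)$, which proves the second statement.

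For the norm bounds I would first reduce to bounding $N(b)$: the matrix $\left(\begin{smallmatrix}0&1\\1&0\end{smallmatrix}\right)\in\GL_2(A)$ is an isometry of $\mathbb{H}^3$ preserving the spine; it fixes $\Sigma$ setwise while swapping $0$ and $\infty$, and it carries the edge for $\alpha=a/b$ to the edge for $b/a$ (same ideal $I$), interchanging the roles of $N(a)$ and $N(b)$. So it suffices to prove $N(b)\le N(I)|D|/2$. From the displayed equation, $N(I)/N(b)=|z-\alpha|^2+\zeta^2\ge \zeta^2$, so it is enough to show $\zeta^2\ge 2/|D|$, equivalently $d_\infty(w)=1/\zeta\le\sqrt{|D|/2}$. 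Because $w$ lies on $e$, the cusps $0,\infty,\alpha$ are all nearest to $w$, so $1/\zeta=\min_\beta d_\beta(w)$; thus everything reduces to the uniform covering estimate $\min_\beta d_\beta(w)\le\sqrt{|D|/2}$ valid for every $w\in\mathbb{H}^3$.

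This covering estimate is the step I expect to be the main obstacle, and I would settle it by geometry of numbers. For a cusp $\beta=c/d$ with ideal $J=(c,d)$ one has the identity $d_\beta(w)=Q_w(c,d)/(\zeta\,N(J))$, where $Q_w(c,d)=|c-dz|^2+\zeta^2|d|^2$ is the pullback of the standard Hermitian norm on $\bbC^2$ under $\left(\begin{smallmatrix}1&-z\\0&\zeta\end{smallmatrix}\right)$, viewed as a positive definite quadratic form on the rank–$4$ lattice $A^2$. Since the covolume of $A$ in $\bbC$ is $\sqrt{|D|}/2$, the determinant of $Q_w$ is $(\zeta^2|D|/4)^2$, so Hermite's inequality in dimension four (with $\gamma_4=\sqrt2$) produces a nonzero $(c,d)\in A^2$ with $Q_w(c,d)\le\sqrt2\,(\zeta^2|D|/4)^{1/2}=\zeta\sqrt{|D|/2}$. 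As $N(J)\ge 1$, the corresponding cusp satisfies $d_\beta(w)\le\sqrt{|D|/2}$, which is the required bound. Feeding this back gives $\zeta^2\ge 2/|D|$ and hence $N(b)\le N(I)|D|/2$, and the symmetry above gives the same bound for $N(a)$. The one point I would want to be sure of is that the value $\gamma_4=\sqrt2$ is exactly what makes the estimate match the claimed $|D|/2$; any larger value of Hermite's constant would only weaken the conclusion, so it is the sharpness of $\gamma_4$ that is doing the work.
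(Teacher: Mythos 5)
Your proof is correct, and the first half (deriving the line $tr(z\bar ab)=N(a)+N(b)-N(I)$ by subtracting the two hemisphere equations) is essentially identical to the paper's computation. The norm bounds are where you diverge. The paper simply quotes the covering statement $\mathbb H^3=\bigcup_\beta B_\beta\bigl(\sqrt{|D|/2}\bigr)$ from Harder and Mendoza, observes that a point of $e$ must therefore lie in all three horoballs $B_\infty$, $B_0$, $B_\alpha$ at parameter $\sqrt{|D|/2}$, and reads off $N(b)\le N(I)|D|/2$ from the condition that the half-space $\zeta\ge\sqrt{2/|D|}$ meets the Euclidean ball at $\alpha$ (and $N(a)\le N(I)|D|/2$ from the ball at $0$ meeting the ball at $\alpha$, rather than your swap by $\left(\begin{smallmatrix}0&1\\1&0\end{smallmatrix}\right)$ --- both reductions are fine). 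You instead prove the covering estimate from scratch: the identity $d_\beta(w)=Q_w(c,d)/(\zeta N(J))$, the covolume computation giving $\det Q_w=(\zeta^2|D|/4)^2$ on $A^2\cong\bbZ^4$, and Hermite's constant $\gamma_4=\sqrt2$ yield exactly $\min_\beta d_\beta(w)\le\sqrt{|D|/2}$; this is a standard and correct derivation of the Harder--Mendoza bound, and your observation that the sharp value $\gamma_4=\sqrt2$ is precisely what reproduces the constant $|D|/2$ is accurate (the scale-invariance of $Q_w(c,d)/N(J)$ under $(c,d)\mapsto\lambda(c,d)$ also takes care of non-primitive lattice vectors, as you implicitly use). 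So your argument buys self-containedness at the cost of invoking the exact value of $\gamma_4$, while the paper's is shorter by outsourcing the covering radius to the literature; the two are otherwise the same geometric fact, since your inequality $N(I)/N(b)=|z-\alpha|^2+\zeta^2\ge\zeta^2\ge 2/|D|$ is just the algebraic form of the paper's statement that the two horoballs meet.
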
 
\begin{proof}
It is known (see \cite{Ha} and \cite{Me}) that 
\[
\mathbb H^3 = \cup_\beta B_{\beta}(\sqrt{\frac{|D|}{2}}). 
\] 
Since $\infty$, $0$ and $\alpha$ are the closest cusps to the edge $e$, 
\[
B_{\infty}(\sqrt{\frac{|D|}{2}}) \cap B_{0}(\sqrt{\frac{|D|}{2}}) \cap 
B_{\alpha}(\sqrt{\frac{|D|}{2}}) \neq \emptyset 
\] 
Since $B_{\infty}(\sqrt{\frac{|D|}{2}})$ is the half-space consisting of points 
$(z,\zeta)$ in $\mathbb H^3$ such that $\zeta\geq \sqrt{\frac{2}{|D|}}$ and 
$B_{\alpha}(\sqrt{\frac{|D|}{2}})$ is a Euclidean ball of radius 
$\sqrt{\frac{|D|}{2}}\frac{N(I)}{N(b)}$ touching the boundary $\bbC$, these two subsets 
have non-empty intersection if and only if $N(b) \leq N(I)\frac{|D|}{2}$. Similarly, the balls 
$B_{0}(\sqrt{\frac{|D|}{2}})$ and $B_{\alpha}(\sqrt{\frac{|D|}{2}})$ intersect if and only if 
$N(a) \leq N(I)\frac{|D|}{2}$.

The set of points equidistant to $\infty$ and $0$ is the Euclidean hemisphere 
centered at $0$ with radius $1$. Likewise, the set of points equidistant to $\infty$ and $\alpha$ is the Euclidean hemisphere centered at $\alpha$ with 
radius $\sqrt{\frac{N(I)}{N(b)}}$. An easy calculation now shows that 
the edge, which is contained in the intersection of these two hemispheres, 
projects to the line $tr(z\bar{a}b) = N(a)+N(b)-N(I)$.
\end{proof}

By a classical result every ideal class contains an integral ideal of 
norm less than or equal to $\sqrt{\frac{|D|}{3}}$. Thus 
the projection of the fundamental cell $\Sigma$ to the boundary 
$\bbC$ is obtained by drawing finitely many lines $tr(z\bar{a}b) = N(a)+N(b)-N(I)$
where $N(a)$ and $N(b)$ satisfy inequalities as in Lemma \ref{reduction}, and the norm of the ideal $I$ generated by $a$ and $b$ is bounded by 
$\sqrt{\frac{|D|}{3}}$. The connected component (of the complement to these lines) containing 0 is the projection of $\Sigma$. 

Recall that $A$ is a Euclidean domain if for any two elements $a,b\in A$ such that $b\neq 0$ 
there exist $c\in A$ such that $|\frac{a}{b}-c|<1$. 
This holds for $D=-3,-4, -7,-8$ and $-11$.

\begin{prop} $\GL_2(A)$ acts transitively on 2-cells in $X$ if and only if $A$ 
is a Euclidean domain. Moreover, in that case, the projection of the fundamental
 cell $\Sigma$ to $\bbC$ is the Voronoi cell of the lattice $A$, that is,  the set of points $z$ in
$\bbC$ such that $0$ is the closest lattice point.
\end{prop}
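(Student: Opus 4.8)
The plan is to show that transitivity of the $\GL_2(A)$-action on $2$-cells is equivalent to two things — that $A$ has class number one and that no ``extra'' cusp appears along $\partial H_\infty$ — and then to identify this conjunction with the Euclidean property. Transitivity means every $2$-cell lies in the orbit of the fundamental cell $\Sigma=H_\infty\cap H_0$. Since every cusp $\alpha$ has a nonempty Dirichlet region $H_\alpha$ whose boundary is a union of $2$-cells, every cusp bounds some $2$-cell; if all $2$-cells are equivalent to $\Sigma$, whose cusps $\infty,0$ lie in $\mathbb P(A)$, then every cusp is $\GL_2(A)$-equivalent to $\infty$, i.e. every ideal class is trivial. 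Hence transitivity forces class number one, so $A$ is a PID. Conversely, when $A$ is a PID every primitive vector of $A^2$ extends to a basis, so any cusp can be carried to $\infty$, and transitivity reduces to the stabilizer of $\infty$ acting transitively on the $2$-cells of $\partial H_\infty$. That stabilizer is exactly the group of affine maps $z\mapsto \varepsilon z+\gamma$ with $\varepsilon\in A^\times$, $\gamma\in A$.

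Next I would read off the Voronoi structure from Lemma \ref{reduction}. Assuming $A$ a PID, so $N(I)=1$, the wall of $\partial H_\infty$ coming from a lattice cusp $\lambda=\lambda/1$ projects (Lemma \ref{reduction} with $a=\lambda$, $b=1$) to the line $tr(\bar\lambda z)=N(\lambda)$, which is precisely the perpendicular bisector of $0$ and $\lambda$. Thus the lattice cusps alone cut out the Voronoi cell $V_0=\{z:\ |z|\le|z-\lambda|\ \text{for all }\lambda\in A\}$, and since $\Sigma\subseteq H_0$ its projection is always contained in $V_0$. Translating by the lattice subgroup of the stabilizer, $H_\infty\cap H_\lambda$ is the translate of $\Sigma$ centred over $\lambda$, so these cells form a single orbit; hence, given class number one, transitivity holds if and only if no non-lattice cusp is adjacent to $\infty$, equivalently the projection of $\Sigma$ fills all of $V_0$.

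It then remains to decide when a non-lattice cusp can be adjacent to $\infty$. For $\beta=a/b$ in lowest terms with $N(b)\ge 2$, the wall $d_\infty=d_\beta$ is the hemisphere $|z-\beta|^2+\zeta^2=1/N(b)$, on which $d_\infty=1/\zeta$, and such a point lies on the spine exactly when $|z-\lambda|^2\ge 1-\zeta^2$ for every $\lambda\in A$. Eliminating $\zeta$, the cusp $\beta$ bounds a genuine $2$-cell with $\infty$ if and only if
\[
\sup_{z}\Bigl(\min_{\lambda\in A}|z-\lambda|^2-|z-\beta|^2\Bigr)>1-\frac{1}{N(b)} .
\]
The left side is at most $\mu^2$, where $\mu$ is the covering radius of $A$, while the deepest holes of $A$ are occupied by cusps $\beta_0$ for which the radius-$1$ horoballs of the surrounding lattice points and $B_{\beta_0}(1)$ meet tangentially, forcing $1-\mu^2=1/N(b_0)$. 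Hence when $\mu<1$ the supremum never exceeds $1-1/N(b)$ — with equality only over the holes, where $\beta_0$ meets $\infty$ in a single spine vertex — so no non-lattice cusp is adjacent and the projection of $\Sigma$ equals $V_0$, proving the ``Moreover'' clause. When $\mu\ge 1$ the hole above $\beta_0$ sits strictly above the radius-$1$ level, the supremum is positive, and $\beta_0$ bounds a $2$-cell not equivalent to $\Sigma$. As $\mu<1$ is exactly the Euclidean condition of the preceding paragraph, this gives the equivalence.

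The main obstacle is the displayed inequality: passing from the combinatorial adjacency of $\beta$ to a sharp comparison between the depth of the holes of $A$ and the shrunken horoballs $B_\beta(1)$, and matching the threshold $1-1/N(b)$ with the Euclidean division property. The uniform bound by $\mu^2$ is routine, but the borderline tangency analysis at the deep holes is delicate; the cleanest route is likely to locate the deep-hole cusps explicitly and check $1-\mu^2=1/N(b_0)$, or else to invoke the classical fact that the covering radius of $A$ is $<1$ precisely for $D=-3,-4,-7,-8,-11$.
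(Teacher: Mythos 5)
Your first two paragraphs are sound and essentially reproduce the paper's reduction: transitivity amounts to every $2$-cell of $\partial H_\infty$ being an $A$-translate of $\Sigma$, the lattice cusps alone cut out the Voronoi cell via the perpendicular bisectors $tr(z\bar{a})=N(a)$ (this is exactly how the paper proves the ``Moreover'' clause), and everything hinges on whether a non-lattice cusp can share a $2$-cell with $\infty$. The gap is in your third paragraph, which is supposed to settle precisely that question, and the proposed resolution fails. The identity $1-\mu^2=1/N(b_0)$ at the deep holes is false: for $D=-3$ the deep hole is $(1+\rho)/3=1/(1+\bar\rho)$, so $N(b_0)=3$ while $1-\mu^2=2/3$; for $D=-11$ the deep hole is $(3+5\omega)/11$ with $\omega=(1+\sqrt{-11})/2$, which in lowest terms has $N(b_0)=11$ while $1-\mu^2=2/11$ (and the surrounding horoballs $B_\lambda(1)$ do not touch $B_{\beta_0}(1)$ at all, so the tangency picture is wrong). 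Moreover, even granting an identity of this shape, the inference ``$\mu<1$ implies the supremum never exceeds $1-1/N(b)$'' does not follow from the bound by $\mu^2$: for $D=-11$ one has $\mu^2=9/11>2/3=1-1/N(b)$ for cusps with $N(b)=3$ (e.g. $\bar\omega/3$), so your necessary condition does not exclude them, yet they must be excluded because $A$ is Euclidean there. The covering radius is simply too coarse an invariant for this step, and invoking the classification $D=-3,-4,-7,-8,-11$ identifies the Euclidean rings but does not prove that Euclidean implies no adjacent non-lattice cusp.

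The paper closes exactly this gap with an arithmetic rather than metric argument (the ``Claim'' in its proof). Given a non-lattice cusp $\alpha=a/b$ with $|\alpha-c|<1$ for some $c\in A$, translate so that $c=0$, i.e. $0<|\alpha|<1$, and introduce the auxiliary cusp $\beta=\bar b/\bar a$. By Lemma \ref{reduction} the three hemispheres bisecting $\{\infty,0\}$, $\{\infty,\alpha\}$ and $\{\infty,\beta\}$, of radii $1$, $\sqrt{N(I)/N(b)}$ and $\sqrt{N(I)/N(a)}$, all lie over the same line $tr(z\bar{a}b)=N(a)+N(b)-N(I)$ and hence share a common geodesic, with the hemisphere over $\alpha$ the lowest of the three; so at every point equidistant to $\infty$ and $\alpha$, except along that single geodesic, either $0$ or $\beta$ is strictly closer, and $H_\infty\cap H_\alpha$ contains no $2$-cell. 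This pairing of $a/b$ with $\bar b/\bar a$ is the idea your tangency analysis would need to be replaced by. The converse direction is also simpler than your deep-hole argument: if all $2$-cells of $\partial H_\infty$ are translates of $\Sigma$, their projections, each contained in the unit disc about a lattice point, cover $\bbC$, which is already the Euclidean condition.
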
 
\begin{proof}  Let $\sigma$ be a 2-cell in $\partial H_{\infty}$. 
Then, as one easily checks, $\sigma$ is in the $\GL_2(A)$-orbit of $\Sigma$ if and only if 
$\sigma=g(\Sigma)$ for 
\[ 
g=\left(\begin{array}{cc} 
1 & x \\
0 & 1 \end{array}\right)
\] 
where $x\in A$, i.e. $\sigma$ is an $A$-translate of the fundamental cell $\Sigma$. 

Assume first that $\GL_{2}(A)$ acts transitively on all 2-cells. 
Then any 2-cell in $\partial H_{\infty}$
is an $A$-translate of $\Sigma$. In the upper half space model of 
$\mathbb H^{3}$ the fundamental cell $\Sigma$ lies on the hemisphere centered at 
$0\in \mathbb C$ with radius 1. 
In particular, the projection of $\Sigma$ on $\mathbb C$ is contained in the unit 
circle. Thus unit discs centered at points in $A$ cover $\mathbb C$, i.e. $A$ is a Euclidean domain. For 
the converse we need the following: 

{\bf Claim:} Let $\alpha=\frac{a}{b}\notin A$ such that $|\frac{a}{b}-c|<1$
for some $c\in A$. Then there is no 2-cell in $X$ consisting 
of points equidistant to $\infty$ and $\alpha$. 

Without loss of generality we can 
assume that $c=0$. Then $0<|\alpha|<1$, by assumption. 
Let $\beta=\bar{b}/\bar{a}$.  Then  $\alpha$ and $\beta$ both give rise to the same 
line, as defined in Lemma \ref{reduction}. In other words,  the hemispheres 
centered at $0$, $\alpha$ and $\beta$ of radii $1$, $\sqrt{\frac{N(I)}{N(b)}}$ and 
$\sqrt{\frac{N(I)}{N(a)}}$, respectively, intersect in the same geodesic. Moreover, 
 the hemisphere centered at $\alpha$ is below 
the intersection of the other two. This shows that there is no 2-cell in $X$ 
parameterized by the pair $\{\alpha,\infty\}$, as claimed. 

Thus, if  $A$ is a Euclidean domain then 
open unit discs centered at points in $A$ cover $\bbC$, and 
the claim implies that every 2-cell in $\partial H_{\infty}$ is an 
$A$-translate of $\Sigma$. Moreover, since $\GL_2(A)$ acts transitively on cusps, 
it follows that $\GL_{2}(A)$ acts transitively on 2-cells in $X$.

Finally, if $A$ is a Euclidean domain, the projection of $\Sigma$ to $\bbC$ is cut out by the 
lines $tr(z\bar{a})=N(a)$ for all $a$ in $A$. But the line $tr(z\bar{a})=N(a)$ consists of 
points on $\bbC$ equidistant to $0$ and $a$. Thus $\Sigma$ is the Voronoi cell. 
\end{proof} 

 Assume that $A$ is a Euclidean domain. Then we have a bijection 
$\mathbb P(A)\cong\mathbb P(k)$. Thus  $\alpha$ in $\mathbb P(k)$ 
can be written as  $\alpha=\frac{a}{b}$ where $a$ and $b$ are relatively prime 
elements in $A$ and the cusp $\alpha$ naturally corresponds to a 
{\em lax vector} $(a,b)\in A^2$. If  $\alpha=\frac{a}{b}$ and 
 $\beta=\frac{c}{d}$ are two cusps such that $(a,b)$ and $(c,d)$ are 
lax vectors, then 
Proposition \ref{firstkiss}  
  says that $B_\alpha(1)$ and $B_\beta(1)$ 
are either disjoint or touch, and two touch if and only if 
two corresponding lax vectors span $A^{2}$, i.e. form a {\em lax basis}.  
 Therefore, lax bases parameterize  2-dimensional cells if $A$ is a Euclidean domain. 

If $D \equiv 0\pmod{4}$ then the Voronoi cell is a rectangle whose
sides bisect the segments $[0, \pm 1]$ and $[0,
  \pm\frac{\sqrt{D}}{2}]$.  If $D \equiv 1\pmod{4}$ then the Voronoi
cell is a hexagon whose sides bisect the segments $[0, \pm 1]$ and
$[0, \pm\frac{1}{2}\pm\frac{\sqrt{D}}{2}]$.  Thus, if $A$ is a
Euclidean domain,  then there is only one $\GL_2(A)$-orbit of
0-cells (vertices) in $X$, and one or two orbits of 1-cells, depending
on whether the edges of the Voronoi cell have equal lengths or not.

 We shall now describe the spine $X$ for $D=-3$ and $-4$. 
If $D=-3$ then $A=\bbZ[\rho]$ where $\rho=e^{\frac{2\pi i}{6}}$.  The fundamental 
cell $\Sigma$ is a  regular hexagon. Its vertices are $v_n$ ($n=1, \ldots ,6$) where 
$v_n$ is contained in exactly four regions $H_{\alpha}$,  where $\alpha= 0,\infty, \rho^{n-1}, \rho^n$. The 
link of every vertex of $X$ is the 1-skeleton of a tetrahedron. In particular, 
the spine $X$ consists of regular hexagons, where 3 are glued along each edge.
 If $D=-4$ then $A=\bbZ[i]$. The fundamental 
cell $\Sigma$ is a square. Its vertices are $v_n$ ($n=1, \ldots ,4$) where $v_n$ 
is contained in exactly six regions $H_{\alpha}$,  
where $\alpha= 0,\infty, i^{n-1}, i^n, i^{n-1}+i^n$ and $(i^{n-1}+i^n)/2$. The 
link of every vertex of $X$ is the 1-skeleton of a cube. In particular, 
the spine $X$ consists of squares, where 3 are glued along each edge.
These results are not new, as they can be traced back to an article of Speiser
\cite{Sp}. For more general values of $D$, see \cite{Me, Vo}. The
structure of $\mathbb H^3/GL_2(A)$ for $|D|<100$ was
determined by Hatcher \cite{hatcher}.

\section{Ocean} 

Let $f(x,y)$ be an integral hermitian form over $k$.  The form $f$
defines a function $F: \mathbb P(k) \rightarrow \mathbb Q$ (values on
the regions $H_{\alpha})$ as follows.  Let $\alpha=(a,b)\in \mathbb
P(I)$. Then
\[
F(\alpha)=\frac{f(a,b)}{N(I)} 
\]
is well defined, that is, does not depend on the choice of $I$ in the ideal class. 
Conversely, if we know $F(\alpha)$ and $\alpha$ is given as a fraction $\frac{a}{b}$ 
then we can compute the value $f(a,b)$. Note that
there exists an integer $n$ such that 
$F(\alpha)\in \frac{1}{n}\mathbb Z$ for every cusp $\alpha$, so the set of 
values of $F(\alpha)$ is discrete. 

\begin{deff} 
Let $f$ be an  indefinite and anisotropic integral  hermitian form over $k$. 
The ocean $C$ for the form $f$ is the subset of $X$ consisting 
of all 2-dimensional cells $\sigma_{\alpha,\beta}=H_{\alpha}\cap H_{\beta}$ such that $F(\alpha)$ and 
$F(\beta)$ have opposite signs. 
\end{deff} 

\smallskip 
We note that the ocean is always non-empty for any indefinite form $f$. 
Indeed, if not, then 
we can write $\mathbb H^{3}$
\[
\mathbb H^{3} = \bigcup_{F(\alpha)>0}H_{\alpha} \cup \bigcup_{F(\alpha)<0}H_{\alpha}
\]
as a disjoint union of two nonempty closed subsets, a contradiction. 

Let $U(f) \subseteq \GL_{2}(A)$ be the unitary group preserving the form $f$. 
Note that $U(f)$ acts on the ocean $C$ of the form $f$. 
The main objective of this section is the following. 

\begin{thm}\label{T1} Let $f$ be an indefinite and anisotropic integral hermitian form over $k$. Let 
$C$ be the corresponding ocean. Then the quotient $C/U(f)$ is compact. 
\end{thm}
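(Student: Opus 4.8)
The plan is to transfer cocompactness from the totally geodesic plane $\mathbb H^2_f$ to the ocean $C$ via the nearest point projection $\pi\colon\mathbb H^3\to\mathbb H^2_f$. Two inputs are needed. First, since $f$ is anisotropic the quaternion algebra $\left(\frac{D,\Delta}{\bbQ}\right)$ is a division algebra, so $U(f)$ is a \emph{cocompact} Fuchsian group and $\mathbb H^2_f/U(f)$ is compact; this is the cocompactness of the norm-one group of an order in a rational division quaternion algebra, the fact quoted in the introduction. Second, I would show that $C$ lies within a bounded neighborhood of $\mathbb H^2_f$. Granting both, $\pi$ is $1$-Lipschitz and $U(f)$-equivariant (nearest point projection onto a convex $U(f)$-invariant subset of a complete CAT(0) space), and a short covering argument finishes the proof.

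\emph{Values along the ocean are bounded.} Given a cell $\sigma_{\alpha,\beta}$ of $C$, choose a basis of the underlying lattice adapted to $\{\alpha,\beta\}$, so that the matrix of $f$ becomes $\left(\begin{smallmatrix}F(\alpha)&\mu\\ \bar\mu&F(\beta)\end{smallmatrix}\right)$ with determinant $\Delta/D=-\Delta/|D|$. Thus $N(\mu)=F(\alpha)F(\beta)+\Delta/|D|$; since $F(\alpha)$ and $F(\beta)$ have opposite signs along the ocean and $N(\mu)\geq 0$, this gives $|F(\alpha)|\,|F(\beta)|\leq\Delta/|D|$ and $N(\mu)\leq\Delta/|D|$. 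Because the values of $F$ lie in a fixed discrete set $\tfrac1n\bbZ$ and are nonzero by anisotropy, each factor is at least $1/n$; hence $|F(\alpha)|,|F(\beta)|\leq n\Delta/|D|$ and $|\mu|\leq\sqrt{\Delta/|D|}$. So the entries of the adapted matrix are bounded by a constant depending only on $D$ and $\Delta$. This is the exact analogue of Conway's remark that $f$ is bounded along the river.

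\emph{The ocean lies in a bounded neighborhood of $\mathbb H^2_f$}, which I expect to be \textbf{the main obstacle}. Let $S=\overline{M_f}$ be the hermitian matrix representing $F$ under the trace pairing, so that $\mathrm{tr}(x_\gamma S)=F(\gamma)$ for every boundary point $x_\gamma$, and set $s_f=\mathrm{adj}(S)$. Then $\langle w,s_f\rangle=\tfrac12\,\mathrm{tr}(wS)$, and $\langle s_f,s_f\rangle=\det S=\det M_f=\Delta/D<0$, so $s_f$ is spacelike for the signature $(1,3)$ form $\langle\,,\rangle$ (with $\langle x,x\rangle=\det x$) and its orthogonal hyperplane cuts $\mathbb H^3$ in exactly $\mathbb H^2_f$ (the functional $\tfrac12\mathrm{tr}(\cdot\,S)$ vanishes on the circle $f=0$). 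By the standard formula for the distance to a hyperplane,
\[
\sinh\bigl(\mathrm{dist}(w,\mathbb H^2_f)\bigr)=\frac{|\langle w,s_f\rangle|}{\sqrt{-\langle s_f,s_f\rangle}}=\frac{|\mathrm{tr}(wS)|}{2\sqrt{\Delta/|D|}}.
\]
It remains to bound $\mathrm{tr}(wS)$ for $w\in C$. Since $\GL_2(A)$ acts cocompactly on the locally finite complex $X$ (Mendoza), there are finitely many orbits of $2$-cells and each cell is compact; moving $w$ by a suitable $g\in\GL_2(A)$ carries it to a point $w'$ in one of finitely many fixed compact representative cells, so $\|w'\|$ is uniformly bounded, while $\mathrm{tr}(wS)=\mathrm{tr}(w'S')$ with $S'=g^{\ast}Sg$ the matrix of (the conjugate of) $f$ in the adapted basis, whose entries were bounded above. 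Hence $|\mathrm{tr}(wS)|$ is uniformly bounded on $C$ and $\mathrm{dist}(w,\mathbb H^2_f)\leq R_0$ for a constant $R_0=R_0(D,\Delta)$.

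\emph{Transfer of cocompactness.} Pick a compact $K\subseteq\mathbb H^2_f$ with $U(f)\cdot K=\mathbb H^2_f$, and set $\widetilde K=\{w\in\mathbb H^3:\pi(w)\in K,\ \mathrm{dist}(w,\mathbb H^2_f)\leq R_0\}$, a closed bounded, hence compact, subset of the proper space $\mathbb H^3$. For $w\in C$ we have $\pi(w)\in\mathbb H^2_f=U(f)\cdot K$, so some $\gamma\in U(f)$ satisfies $\pi(\gamma w)=\gamma\pi(w)\in K$; as $\mathrm{dist}(\gamma w,\mathbb H^2_f)=\mathrm{dist}(w,\mathbb H^2_f)\leq R_0$, this gives $\gamma w\in\widetilde K$, so $C=U(f)\cdot(\widetilde K\cap C)$. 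Finally, local finiteness of $X$ forces the compact set $\widetilde K$ to meet only finitely many of the compact cells making up $C$; therefore $\widetilde K\cap C$ has compact closure and its image in $C/U(f)$ is compact and surjective, proving that $C/U(f)$ is compact.
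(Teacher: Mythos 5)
Your proposal is correct in outline but reaches the theorem by a genuinely different route from the paper's. The paper never leaves the spine: it reduces cocompactness to finiteness of the number of $U(f)$-orbits of \emph{vertices} of $C$, obtained from three ingredients --- the form is determined by the values of $F$ at the regions meeting a single vertex (Lemma \ref{vertex}, via Proposition \ref{hermitian}), $F$ takes only finitely many values along the ocean (the inequality $N_{\alpha,\beta}\Delta\geq DF(\alpha)F(\beta)$, which is exactly the computation in your second step, combined with discreteness of the values of $F$), and $\GL_2(A)$ has finitely many orbits of vertices. You instead import the classical fact that $U(f)$ is a cocompact Fuchsian group acting on $\mathbb H^2_f$ (Godement's criterion for the unit group of an order in a rational division quaternion algebra) and push cocompactness from $\mathbb H^2_f$ to $C$ by showing the ocean stays at bounded distance from $\mathbb H^2_f$. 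Both work. The paper's route is self-contained --- it needs nothing beyond the finiteness properties of Mendoza's spine, Lemma \ref{vertex} is reused in Section 8 to compute $U(f)$, and together with the Main Theorem it \emph{reproves} cocompactness of $U(f)$ on $\mathbb H^2_f$ rather than assuming it --- whereas your route yields the bounded-neighborhood statement, the exact analogue of the step used for the river in Section \ref{S1}, which is genuinely useful later (it gives properness of $\pi$ in Claim 7 of Section \ref{s:main} directly). Two points to tighten: the primitive vectors representing $\alpha$ and $\beta$ need not form a basis of the lattice $M$, so the determinant of your adapted matrix is $N_{\alpha,\beta}\Delta/D$ rather than $\Delta/D$; since $N_{\alpha,\beta}$ takes only finitely many values on intersecting pairs this only worsens the constant. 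And in bounding $\mathrm{tr}(wS)$ you should state explicitly that the entries of $S'$ in the \emph{standard} basis are controlled by the bounded entries of the adapted matrix through one of finitely many fixed changes of basis, one for each representative cell.
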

\begin{proof} It suffices to show that the number of $U(f)$-orbits of vertices 
contained in $C$ is finite. We start with the following lemma. 

\begin{lemma} \label{vertex} Let $v$ be a vertex of $X$. Then the 
 form $f$ is determined by the values of $F$ on regions containing the 
 vertex $v$. 
\end{lemma}
\begin{proof} 
We can assume that the vertex belongs to a 2-cell $\sigma_{\alpha,\delta}$ 
and the two edges (meeting at the vertex) are also shared by 2-cells 
$\sigma_{\beta,\delta}$ and $\sigma_{\gamma,\delta}$ respectively. 
By transforming by an element of 
$\GL_{2}(k)$, if necessary, we can assume that $\delta=\infty$, that is, the three 
cells sit on hemispheres centered at $\alpha$, $\beta$ and
$\gamma$. Since the hemispheres intersect at one point (namely $v$) 
these three complex numbers are not on a line. 
Lemma follows from Proposition \ref{hermitian}. 
\end{proof}

Let $H_\alpha$ be region that intersects $C$. Then there is a region 
$H_{\beta}$ such that  $H_\alpha\cap H_\beta\cap C \neq \emptyset$ 
and $F(\alpha) F(\beta)< 0$. Write 
$\alpha=\frac{x}{y}$ and $\beta=\frac{z}{w}$ for some $x,y,z$ and $w$
in $A$. Let $I$ and $J$ be the ideals in $A$ generated by $x,y$ and
$z,w$ respectively. The positive rational number
\[
N_{\alpha,\beta}=\frac{N(xw-yz)}{N(I)N(J)}
 \]
 is well defined and invariant under the action of $\GL_{2}(A)$.
 (The expression $xw-yz$ is the determinant of the matrix 
 $\left(\begin{array}{cc} 
 x & z  \\
 y & w \end{array}\right)$. Geometrically, this number represents the
 exponential of the distance between the horospheres associated to the
 cusps $\alpha$ and $\beta$.)
 Since there are finitely many $\GL_2(A)$-orbits of 
 intersecting pairs $(H_\alpha,H_\beta)$, 
 $N_{\alpha,\beta}$ takes a finite number of values.  Let 
 \[
\bfu= \left(\begin{array}{c} 
 x \\
 y \end{array}\right) 
 \text{ and }
\bfv= \left(\begin{array}{c} 
 z \\
 w \end{array}\right) \]
and assume that in the (vector space) basis $(\bfu,\bfv)$ the matrix of the 
form $f$ is $\left(\begin{smallmatrix}a&\bar\nu\\
\nu&c\end{smallmatrix}\right)$. 
Also assume that in the standard basis of $A^2$ the matrix of the form $f$  is 
$\left(\begin{smallmatrix}a'&\bar\nu'\\
\nu'&c'\end{smallmatrix}\right)$. Then 
\[ 
\left(\begin{array}{cc} 
x & z \\
y & w 
\end{array}\right) 
\left(\begin{array}{cc}
a' & \nu' \\
\bar{\nu}' & c' 
\end{array}\right)
\left(\begin{array}{cc}
\bar{x} & \bar{y} \\
\bar{z} & \bar{w}  
\end{array}\right)
=
\left(\begin{array}{cc}
a & \nu \\
\bar{\nu} & c
\end{array}\right). 
\] 
After taking the determinant of both sides and multiplying by $D$ we obtain  
\[
N(xw-yz) \Delta= D(ac- N(\nu))
\]
where $\Delta$ is the discriminant of the form $f$. 
Since $-D N(\nu)\geq 0$, it follows that $N(xw-yz)\Delta\geq Dac$. 
After dividing by $N(I)N(J)$ we arrive at $N_{\alpha,\beta}\Delta\geq DF(\alpha)F(\beta)$. 
Moreover, since $F(\alpha)F(\beta)<0$, we also have $DF(\alpha)F(\beta)>0$. 
These two inequalities imply that $F$ takes only 
finitely many values on regions $H_\alpha$ that intersect $C$. 

We can now finish easily. Let $v$ and $u$ be two vertices in $C$ such
that $u=g(v)$ for some $g$ in $\GL_{2}(A)$. If
$F(\alpha)=F(g(\alpha))$ for all cusps $\alpha$ such that the region
$H_{\alpha}$ contains $v$, then $g$ is in $U(f)$ by Lemma
\ref{vertex}. Since there are finitely many $\GL_{2}(A)$-orbits of
vertices in $X$ and $F$ takes only finitely many values along the
ocean $C$, the group $U(f)$ has a finite number of orbits of vertices
in $C$. The theorem is proved.
\end{proof} 

Note that we have in the process obtained the following result due to Bianchi: 

\begin{cor} The number of $\GL_{2}(A)$-equivalence classes of
integral, indefinite hermitian
forms with fixed discriminant is finite. 
\end{cor}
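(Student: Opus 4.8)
The plan is to extract the Corollary directly from the uniform bounds established in the proof of Theorem \ref{T1}, but using a single ocean cell rather than counting vertex orbits. Since whether $f$ is anisotropic depends only on $\Delta$ (through the Hilbert symbols of $(D,\Delta)$), I would treat the two cases separately: for an indefinite isotropic form the quaternion algebra $\left(\frac{D,\Delta}{\bbQ}\right)$ splits, and finiteness can be handled by the classical reduction theory of the resulting split (hyperbolic) norm form, so I concentrate on the anisotropic case, where the ocean is available.

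Fix $\Delta$ and let $f$ be indefinite and anisotropic with discriminant $\Delta$. The ocean $C$ is non-empty, so it contains at least one $2$-cell $\sigma_{\alpha,\beta}$ with $F(\alpha)F(\beta)<0$. Because there are only finitely many $\GL_2(A)$-orbits of $2$-cells in $X$ (equivalently, of intersecting pairs $(H_\alpha,H_\beta)$, as used in the proof of Theorem \ref{T1}), after replacing $f$ by an equivalent form $g\cdot f$ with $g\in\GL_2(A)$ I may assume that $\sigma_{\alpha,\beta}$ is one of finitely many fixed representative cells $\sigma_0$. In particular the lax vectors $\bfu=(x,y)$, $\bfv=(z,w)$ spanning the two cusps, the ideals $I,J$ they generate, and the norm $N(xw-yz)$ are then all fixed, independent of $f$.

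Now write the matrix of $f$ in the basis $(\bfu,\bfv)$ as $\left(\begin{smallmatrix}a&\bar\nu\\ \nu&c\end{smallmatrix}\right)$. The ocean inequality from the proof of Theorem \ref{T1}, namely $0<DF(\alpha)F(\beta)\le N_{\alpha,\beta}\Delta$ with $N_{\alpha,\beta}$ now a fixed constant, bounds $F(\alpha)F(\beta)$; since $F$ takes values in a fixed discrete set and $F(\alpha),F(\beta)$ have opposite signs (hence are nonzero), both are bounded, and therefore so are $a=N(I)F(\alpha)$ and $c=N(J)F(\beta)$. The identity $N(xw-yz)\,\Delta=D\bigl(ac-N(\nu)\bigr)$ from the same proof then bounds $N(\nu)$, since every quantity other than $N(\nu)$ is now bounded or fixed. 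As $f$ is integral, $\nu$ is confined to a fixed discrete subset (a scaled dual lattice determined by $\sigma_0$), so a bound on $N(\nu)$ leaves only finitely many possibilities for $\nu$; likewise $a$ and $c$ lie in a fixed $\tfrac1n\bbZ$ and are bounded, giving finitely many possibilities. Hence in the fixed basis $(\bfu,\bfv)$ only finitely many matrices can occur, which means every indefinite anisotropic form of discriminant $\Delta$ is $\GL_2(A)$-equivalent to one of finitely many forms.

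The step I expect to demand the most care is the discreteness of $\nu$: one must verify that integrality of $f$ pins $\nu$ to a fixed lattice (the dual lattice $A^*$ when $\sigma_0$ corresponds to a genuine $A$-basis, and a fixed fractional-ideal analogue in the non-principal case), and, crucially, that all the bounding constants depend only on $\Delta$ and $D$ and not on the individual form $f$. Granting this uniform lattice bookkeeping, the boundedness of $a,c$ and of $N(\nu)$ closes the argument; the isotropic case is then the only remaining point, and I would dispose of it by the classical reduction theory for the split norm form.
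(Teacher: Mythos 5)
Your argument is correct and runs on the same engine as the paper's: the Corollary is obtained there as a byproduct of the proof of Theorem \ref{T1}, and both you and the paper use nonemptiness of the ocean, finiteness of $\GL_2(A)$-orbits of cells of $X$, and the inequality $0<DF(\alpha)F(\beta)\leq N_{\alpha,\beta}\Delta$ to bound the values of $F$ along the ocean. The one real difference is the last step. The paper normalizes a \emph{vertex} of the ocean to one of finitely many representatives and invokes Lemma \ref{vertex} (via Proposition \ref{hermitian}): the finitely many possible value-tuples of $F$ at the regions meeting a vertex determine $f$ outright, so there is no need to discuss $\nu$ at all. You normalize a single \emph{2-cell}, whose two cusps do not determine $f$, and recover the missing datum from $N(xw-yz)\Delta=D(ac-N(\nu))$ plus discreteness of $\nu$; the ``lattice bookkeeping'' you worry about is easily discharged by passing back to the standard basis, where $a',c'\in\bbZ$ and $\nu'\in A^*$ are bounded continuous functions of $(a,\nu,c)$ and hence finite in number. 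Both routes work; the paper's is marginally cleaner. Finally, you are right that the ocean machinery only covers the anisotropic case, a restriction the paper passes over silently (for isotropic indefinite forms the ocean is not defined and the two-closed-sets nonemptiness argument fails since $F$ can vanish); your appeal to classical reduction theory for the split case is a reasonable pointer rather than a proof, but since the paper does not address that case either, I do not count it against you.
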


\section{The Main Theorem}\label{s:main}

\def\C{{\mathbb C}}
\def\H{{\mathbb H}}
\def\R{{\mathbb R}}
\def\Z{{\mathbb Z}}

Let $C$ be the ocean (for a fixed $D$ and a fixed indefinite and
anisotropic integral hermitian form $f$). By $\H_f$ denote the totally
geodesic hyperbolic plane in
$\H^3$ whose boundary at infinity is the circle $\{f=0\}$. Finally,
let $\pi:C\to \H_f$ be the nearest point projection.

\begin{thm}
$\pi:C\to \H_f$ is a homeomorphism.
\end{thm}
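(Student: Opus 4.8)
The plan is to follow the outline sketched at the end of Section \ref{S1}, one dimension higher. Since $\pi:C\to\H_f$ is $U(f)$-equivariant and both $C$ (by Theorem \ref{T1}) and $\H_f$ (as $U(f)$ is a cocompact Fuchsian group) have compact quotients under $U(f)$, the map $\pi$ is proper. I would first establish, as a separate preliminary, that $C$ is a manifold and in fact a contractible surface (this is the content promised in Sections 6--7); granting that $C$ is homeomorphic to the plane, the strategy is to show that a proper map between two open disks which is a local homeomorphism must be a global homeomorphism. Properness rules out the pathologies of covering-space degree, and a proper local homeomorphism between connected manifolds is a covering map; since $\H_f$ is simply connected, the only connected covering is the trivial one, giving a homeomorphism. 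Thus the entire theorem reduces to the single local statement:

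\emph{$\pi$ is locally injective.}

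To verify local injectivity I would work cell by cell. Each $2$-cell $\sigma_{\alpha,\beta}$ of the ocean is a convex hyperbolic polygon lying in the totally geodesic plane of points equidistant from the two cusps $\alpha,\beta$ with $F(\alpha)F(\beta)<0$. The key geometric point is that the nearest-point projection onto $\H_f$ restricted to a single totally geodesic hyperplane $\H_{\alpha,\beta}$ is injective precisely when $\H_{\alpha,\beta}$ meets $\H_f$ transversally, i.e. their boundary circles on $\partial\H^3=\bbC\cup\{\infty\}$ cross. Since $\alpha$ and $\beta$ lie on opposite sides of the circle $\{f=0\}$ (that is what $F(\alpha)F(\beta)<0$ encodes), the bisecting plane $\H_{\alpha,\beta}$ must cross $\H_f$, and projection is injective on each closed cell. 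The remaining work is to promote injectivity on individual cells to injectivity on a neighborhood of each vertex $v\in C$: at $v$ the link in $C$ is a circle (the surface condition), and I would check that the images under $\pi$ of the cells meeting $v$ fit together around $\pi(v)$ without overlap, i.e. the angles of the projected cells sum correctly and the cyclic order is preserved.

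The main obstacle is precisely this last local analysis at the vertices, because projection can fold: even if $\pi$ is injective on each closed polygon, two adjacent polygons sharing an edge through $v$ could in principle project to overlapping regions on the same side of the projected edge. To control this I would use the explicit description of the link of a vertex of $X$ (the $1$-skeleton of a tetrahedron when $D=-3$, of a cube when $D=-4$, and more generally a fixed finite polytope boundary) together with the sign pattern of $F$ on the regions around $v$. At a vertex of $C$ the incident regions split into those with $F>0$ and those with $F<0$, and the ocean-cells are exactly the bisectors separating a positive region from a negative region; the cyclic arrangement of signs around the link forces the ocean-edges through $v$ to emanate in a cyclically alternating pattern, and I would argue that $\pi$ sends this alternating fan to a genuine fan in $\H_f$ with no backtracking. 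Once local injectivity is secured at every point, properness and the covering-space argument close the proof.
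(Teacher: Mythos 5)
Your overall frame (properness from equivariance and cocompactness, then ``proper local homeomorphism onto a simply connected target is a homeomorphism'') matches the paper's endgame, and your treatment of a single $2$-cell is essentially the paper's Claim 1: because the two cusps of an ocean cell carry opposite signs of $F$, the bisecting plane is not perpendicular to $\H_f$, so the nearest point projection embeds the cell as a convex hyperbolic polygon. (Minor quibble: injectivity needs only non-perpendicularity, not that the bisector actually crosses $\H_f$; two disjoint totally geodesic planes also project injectively, and the bisector of a north cusp and a south cusp need not meet $\H_f$.) But the proposal has a genuine gap exactly where you flag ``the main obstacle.'' The paper does \emph{not} resolve the vertex problem by examining the combinatorics of links (tetrahedron, cube, etc.) and checking angle sums --- that route is both uncarried-out in your write-up and unlikely to work uniformly, since the link of a vertex of $X$ is not a fixed polytope for general $D$, and the projected angles are not controlled by any local formula (indeed the ocean has vertices of negative curvature). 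The missing idea is the paper's Claim 4: for any two distinct $2$-cells $\sigma,\tau$ of $C$, the interiors of $\pi(\sigma)$ and $\pi(\tau)$ are disjoint. This is proved globally, not vertex by vertex: if a geodesic $\ell$ perpendicular to $\H_f$ met $C$ in two consecutive interior points $z,w$, then near $z$ the segment $[z,w]$ enters a region $H_\alpha$ on the positive side and near $w$ a region $H_\beta$ on the negative side (the ``north/south'' observation of Claim 2), so the sign of $F$ changes along $[z,w]$ and the segment must cross $C$ again in between, a contradiction. From this single disjointness statement the paper extracts everything you are missing: each edge of $C$ lies in exactly two ocean cells (an even number by the sign-change count around an edge, and not more than two because three polygons sharing a side cannot have pairwise disjoint projected interiors), the link of each vertex is a single circle so $C$ is a surface and $\pi$ is a local homeomorphism, and finally the covering map is generically one-to-one, hence of degree one.

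This last point also exposes a circularity in your reduction. You propose to ``first establish that $C$ is a contractible surface'' and then invoke simple connectivity of $\H_f$; but a proper local homeomorphism from a possibly disconnected manifold onto a simply connected one is a covering in which \emph{each component} maps homeomorphically, so you still need $C$ connected (equivalently, degree one), and connectivity of $C$ is not something you can assume --- in the one-dimensional model of Section \ref{S1} it required a separate topological argument, and in the paper's proof it is a \emph{consequence} of Claim 4 via the ``generically $1$--$1$'' count, not a hypothesis. So the theorem does not reduce to local injectivity alone: you need either the disjoint-interiors statement (the paper's route) or an independent proof that $C$ is connected, and your proposal supplies neither.
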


The proof occupies the rest of the section. It is divided into
8 steps. Recall that by definition $C$ the union of 2-cells in the spine
$X$ such that $f$ assigns to the associated two cusps values with opposite
signs. 

{\it Claim 1.} $\pi$ restricted to each 2-cell is an embedding onto a
convex hyperbolic polygon.

In the proof we will use the fact that when $P,Q$ are two totally
geodesic hyperbolic planes in $\H^3$ that do not intersect
perpendicularly, then the nearest point projection $P\to Q$ is
injective (and in fact an embedding to an open subset).

Given a point $z$ in the interior of a 2-cell $\sigma$ in $C$, place
$z$ at the center of the ball model for $\H^3$.  Arrange that the
circle $f=0$ on the 2-sphere $S^2_\infty$ is a ``parallel'', i.e. a
circle equidistant from the equator in the spherical metric. Also
arrange that $\{f>0\}$ includes the north pole and $\{f<0\}$ includes the
south pole. Thus $\pi$ will map $z$ vertically to $\H_f$, which is the
convex hull of $\{f=0\}$.  There are two cusps equidistant from $\sigma$,
one positive (and hence north of $\{f=0\}$), the other negative (and hence
south of $\{f=0\}$). In particular, the two cusps do not belong to the
same parallel and thus the great circle $R_z$ bisecting the two cusps
is not perpendicular to the equator nor to $f=0$. Since the cell
$\sigma$ lies in the convex hull of $R_z$, by the above fact
$\pi|\sigma$ is injective. Finally, $\sigma$ is convex and the edges
of $\sigma$ are geodesic segments, so the same is true for the images
under $\pi$. \qed

{\it Claim 2.} Let $z$ be a point in the interior of a 2-cell
$\sigma\subset C$ and $\ell$ the geodesic through $z$ perpendicular to
$\H_f$. Thus $\ell$ has one (positive) end in $\{f>0\}$ and the other
(negative) end in  $\{f<0\}$.
Then a small open subinterval of $\ell$ with one endpoint at $z$ and
the other endpoint in the direction of the positive [negative] end is
contained in the interior of some $H_\alpha$ with $f(\alpha)>0$
[$f(\alpha)<0$]. 

Indeed, $z$ has precisely two closest cusps, one positive one
negative. In the setting of the proof of Claim 1, the ray going north [south]
from $z$ is contained on the same side of the bisecting plane
$Hull(R_z)$ as the positive [negative] cusp. The Claim follows from
the fact that for every point in a small neighborhood of $z$ the
closest cusp is one of these two.\qed

{\it Claim 3.} Every edge $e$ in $C$ belongs to an even number of 2-cells
in $C$.

This is because the cells of $C$ that contain $e$ are arranged in a
cyclic order around $e$ with sectors determined by consecutive cells
belonging to some $H_\alpha$. A 2-cell is in $C$ precisely when the
signs at the two adjacent sectors are opposite. In a cyclic arrangement of
signs there is always an even number of sign changes.\qed

{\it Claim 4.} If $\sigma,\tau$ are two distinct 2-cells in $C$, then
$\pi(\sigma)$ and $\pi(\tau)$ have disjoint interiors.

For suppose not. Choose a point $y\in int~ \pi(\sigma)\cap int~
\pi(\tau)$ that is not in the image of the 1-skeleton of $C$. Let
$\ell$ be the geodesic through $y$ perpendicular to $\H_f$. Thus $\ell$
intersects $\sigma,\tau$ in two interior points $z,w$ and all
intersection points of $\ell$ and $C$ occur in the interiors of
2-cells and form a locally finite set (in fact, finite -- see Claim
7). After renaming, we will assume that $z,w$ are two consecutive
intersection points in $\ell$. By Claim 2, small neighborhoods of $z$
and $w$ in the segment $[z,w]\subset\ell$ belong to regions $H_\alpha$
and $H_\beta$ with opposite signs. But this means that $[z,w]$
intersects $C$ in an interior point, contradiction.\qed

{\it Claim 5.} Every edge $e$ in $C$ is contained in exactly two
2-cells in $C$ and their union is embedded by $\pi$.

Indeed, it is impossible for three polygons with a common side to have
pairwise disjoint interiors, so by Claim 3 the edge $e$ belongs to two
2-cells. Their images are two convex hyperbolic polygons with a common
edge and disjoint interiors, so $\pi$ embeds the union.\qed

{\it Claim 6.} $C$ is a surface and $\pi$ is a local homeomorphism.

First note that Claims 1 and 5 imply that $C$ is a surface and $\pi$
is a local homeomorphism away from the vertices. Let $v$ be a vertex
of $C$. By Claim 5, the link $Lk(v,C)$ is a disjoint union of
circles. Each component of $Lk(v,C)$ corresponds to a collection of
2-cells cyclically arranged around the common vertex $v$. By Claim 4
their images are cyclically arranged around $\pi(v)$ and $\pi$ is an
embedding on the union of these 2-cells. If there is more than one
circle in $Lk(v,C)$ there would be 2-cells contradicting Claim
4. (More formally, $Lk(v,C)$ can be identified with the space of germs
of geodesic segments in $C$ leaving $v$ and $\pi$ induces a map
$Lk(v,C)\to Lk(\pi(v),\H_f)=S^1$ with $Lk(\pi(v),\H_f)$ defined
similarly. Claim 5 implies that this map is locally injective, hence a
local homeomorphism, hence a covering map. By Claim 4 this map is
generically 1-1, hence a homeomorphism.)  Thus $Lk(v,C)$ is a single
circle, so $C$ is a manifold, and $\pi$ is a local homeomorphism.\qed

{\it Claim 7.} $\pi:C\to \H_f$ is a proper map, i.e. preimages of compact
sets are compact.

This follows from the fact that $\pi$ is $U(f)$-equivariant, and that
$U(f)$ acts cocompactly on $C$ (Theorem \ref{T1}), and discretely on
$\H_f$.\qed

{\it Claim 8.} $\pi:C\to \H_f$ is a homeomorphism.

A proper local homeomorphism between locally compact spaces is a
covering map. By Claim 4 this map is generically 1-1.\qed

\begin{cor} \label{negative curvature}
The ocean always contains vertices with negative curvature (i.e. the
sum of the angles around the vertex is $>2\pi$).
\end{cor}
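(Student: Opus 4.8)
The plan is to use the Gauss--Bonnet theorem for the tiling of $\H_f$ obtained by projecting the cells of the ocean, exploiting the fact that $C/U(f)$ is a compact surface by Theorem \ref{T1} and the Main Theorem. Since $\pi:C\to \H_f$ is a homeomorphism carrying the cell structure of $C$ to a tiling of $\H_f$ by hyperbolic polygons, and since $U(f)$ acts cocompactly, I would pass to the closed surface $S=\H_f/\Gamma$ for a suitable finite-index torsion-free subgroup $\Gamma\subseteq U(f)$. Because $\H_f$ carries a hyperbolic metric, $S$ has genus $\geq 1$, hence Euler characteristic $\chi(S)\leq 0$.

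The key combinatorial input is that the \emph{curvature} at a vertex $v$ of the tiling is $2\pi-\theta(v)$, where $\theta(v)$ is the sum of the hyperbolic angles of the polygons meeting at $v$; saying $v$ has negative curvature in the sense of the corollary means $\theta(v)>2\pi$. First I would invoke combinatorial Gauss--Bonnet on the closed surface $S$: summing the vertex curvatures, the edge contributions (which vanish since edges are straight), and the face contributions gives $\sum_{v}(2\pi-\theta(v)) + \sum_{F}(\text{angle deficit of }F) = 2\pi\chi(S)$. More directly, since each face is a hyperbolic polygon, its area equals $(\text{number of sides}-2)\pi$ minus the sum of its interior angles, and the total area of $S$ is $-2\pi\chi(S)>0$. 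Combining the area formula with the angle-sum accounting at vertices, the positivity of total area forces $\sum_v\big(\theta(v)-2\pi\big)>0$, so at least one vertex must satisfy $\theta(v)>2\pi$, i.e. have negative curvature.

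The step I expect to be the main obstacle is the bookkeeping that converts the global hyperbolic-area inequality into a statement about a single vertex, because one must be careful that the tiling really descends to a genuine cell structure on the closed surface $S$ and that every face is a nondegenerate convex hyperbolic polygon (which is exactly the content of Claim 1 and the Main Theorem). One subtlety is that $U(f)$ may have torsion, so I would first replace $U(f)$ by a torsion-free finite-index $\Gamma$ (which exists by Selberg's lemma, as $U(f)$ is a finitely generated linear group); the negative-curvature conclusion for $C$ itself then follows because the tiling and its vertex curvatures are $U(f)$-invariant, so a vertex of negative curvature downstairs lifts to one upstairs. Once this is set up, the argument is simply that a compact hyperbolic surface has positive area, and a flat or nonnegatively curved tiling would force the total area to be $\leq 0$, a contradiction.
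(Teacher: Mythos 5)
Your overall strategy --- pass to a torsion-free finite-index subgroup $\Gamma\subseteq U(f)$ via Selberg's lemma, form the closed quotient surface, and apply combinatorial Gauss--Bonnet using $\chi<0$ --- is the right way to deduce the corollary from the Main Theorem (the paper gives no separate proof; this is exactly why it is stated as a corollary). But there is a genuine error in which angles you are summing. The curvature in the corollary refers to the intrinsic piecewise-Euclidean metric on the ocean $C\subseteq X$: each $2$-cell of the spine is a Euclidean polygon (Section 4), and ``the sum of the angles around the vertex'' means the sum of the Euclidean angles of the cells of $C$ at that vertex --- this is how the paper uses the notion in Sections 6 and 7 (e.g.\ four regular hexagons with $120^{\circ}$ angles meeting at $v$ give $480^{\circ}>360^{\circ}$). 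You instead take $\theta(v)$ to be the sum of the \emph{hyperbolic} angles of the projected polygons $\pi(\sigma)$ in $\H_f$. Since $\pi$ is a homeomorphism and the images genuinely tile $\H_f$, the hyperbolic angles around every vertex of the projected tiling sum to exactly $2\pi$; the negative curvature of $S=\H_f/\Gamma$ is carried entirely by the face terms (the angle deficits, i.e.\ the areas of the hyperbolic polygons), not by the vertex terms. So your concluding step, ``the positivity of total area forces $\sum_v(\theta(v)-2\pi)>0$,'' is false for the quantity you defined: in your own Gauss--Bonnet identity the vertex sum vanishes and the face sum alone accounts for $2\pi\chi(S)$.

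The repair is to run the same Gauss--Bonnet argument on $C/\Gamma$ with its piecewise-Euclidean metric rather than on the projected hyperbolic tiling. Then every face is flat and contributes nothing, edges contribute nothing, and the identity reads $\sum_v\bigl(2\pi-\theta(v)\bigr)=2\pi\,\chi(C/\Gamma)$, where now $\theta(v)$ is the Euclidean angle sum. By the Main Theorem $C/\Gamma$ is homeomorphic to the closed hyperbolic surface $\H_f/\Gamma$, so $\chi(C/\Gamma)<0$ (genus at least $2$, not merely at least $1$ as you wrote --- you need the strict inequality, since $\chi=0$ would allow every vertex to have angle sum exactly $2\pi$). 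Hence some vertex has $\theta(v)>2\pi$, and by $U(f)$-invariance of the cell structure this lifts to a vertex of $C$, as you observe. Equivalently: if every vertex of $C$ had Euclidean angle sum at most $2\pi$, the plane $C$ would be nonnegatively curved, and a group acting on it properly and cocompactly would be virtually $\Z^2$, contradicting the fact that $U(f)$ is a cocompact Fuchsian group.
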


\section{Hermitian forms, case $D=-4$}\label{gauss}

In the spirit of Conway, in this section we study the spine $X$ 
  over the Gaussian integers. We give an example of the ocean, and describe the corresponding group 
 $U(f)$ in terms of generators and relations.

Recall the description of the spine 
$X$ from Section \ref{Spine}. Its 2-cells are squares and correspond to lax bases of $A^2$. 
 The link $Lk_X(v)$ of any vertex $v$ is the 1-skeleton of a cube. 
 It follows that 2-cells are glued three along each edge.
 In particular, 1-cells in $X$ correspond to
 lax superbases, i.e. triples of lax vectors such that any two form a lax basis.
  If a 2-cell (a square) corresponds to 
 a lax basis $\{\bfu,\bfv\}$, then its edges correspond (consecutively) to superbases 
 $\{\bfu,\bfv, \bfu+i^k \bfv\}$ for $k=1, \ldots ,4$.

  The parameterization of vertices in terms of lax vectors is more complicated.
  Note that six regions, corresponding to the six faces of the cube representing
  $Lk_X(v)$, meet at the vertex $v$.  Moreover, any two regions
  corresponding to opposite sides meet at the vertex $v$ only. Thus
  the vertex $v$ corresponds to six lax vectors, and 
  any two lax vectors $\bfr$ and $\bfs$, corresponding to a pair of
  opposite sides of the cube, span a submodule of index
  $2$. Furthermore, the remaining four lax vectors can be expressed
  in terms of $\bfr$ and $\bfs$ as follows:
\[
\frac{1+i}{2}(\bfr+\bfs) , \frac{1+i}{2}(\bfr+i\bfs), \frac{1+i} {2}(\bfr-\bfs)\text{ and } \frac{1+i}{2}(\bfr-i\bfs). 
\]
The map $\bfr \mapsto \bfr$ and $\bfs\mapsto i\bfs$ induces a
$90^{\circ}$ rotation of the cube about an axis through the centers of
opposite faces. These rotations, for any pair of opposite sides of the
link square, generate the group $S_4$ of the orientation preserving
isometries of the cube, which is the
stabilizer of the vertex $v$ in $\PGL_2(A)$. The fundamental domain
for the action of $\PGL_2(A)$ on the spine $X$ is a $45^{\circ} -
45^{\circ} - 90^{\circ}$ triangle obtained by the barycentric subdivision
of the square. Now one easily checks the following:

\begin{prop} \label{cube}
Let $f$ be a hermitian form and $v$ a vertex of the spine $X$ 
determined by a pair of lax vectors $\{\bfr,\bfs\}$ such that $\bfr$ and $\bfs$ 
span an $A$-submodule of $A^2$ of index $2$. Then 
\[
2f(\bfr)+2f(\bfs)= \sum_{k=1}^{4} f(\bfr+i^k \bfs)
\]
i.e. twice the sum of the values $f$ at two opposite regions is equal to the sum of 
values of $f$ at the four remaining regions. In particular, $inv(v)=f(\bfr) +f(\bfs)$ 
is independent of the choice $\{\bfr,\bfs\}$ of opposite regions at the vertex $v$. 
\end{prop}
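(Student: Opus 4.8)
The plan is to treat the asserted identity as a polarization (parallelogram-type) computation for the hermitian form $f$, together with careful bookkeeping of the normalization factor $\frac{1+i}{2}$ that distinguishes the four complementary lax vectors from the two opposite ones. Throughout I use that $A=\bbZ[i]$ is a principal ideal domain, so that each cusp has a primitive representative and the value of $f$ on a region is simply $f$ evaluated at any such representative; in particular $f$ is unchanged under multiplying a lax vector by a unit, since $f(u\bfw)=N(u)f(\bfw)=f(\bfw)$ for $u\in A^{\times}$.

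First I would introduce the sesquilinear form $B(\cdot,\cdot)$ polarizing $f$ (linear in the first variable, conjugate-linear in the second, with $f(\bfw)=B(\bfw,\bfw)$) and expand
\[
f(\bfr+i^k\bfs)=f(\bfr)+f(\bfs)+i^k B(\bfs,\bfr)+\overline{i^k}\,B(\bfr,\bfs),
\]
using $N(i^k)=1$. Summing over $k=1,\dots,4$ and invoking the vanishing of the geometric sums $\sum_{k=1}^4 i^k=\sum_{k=1}^4\overline{i^k}=0$ kills both cross terms, giving $\sum_{k=1}^4 f(\bfr+i^k\bfs)=4\bigl(f(\bfr)+f(\bfs)\bigr)$. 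This is the exact analogue of Conway's parallelogram law, now with four terms in place of two.

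Next I would account for the normalization. The four complementary regions have primitive representatives equal, up to units, to $\frac{1+i}{2}(\bfr+i^k\bfs)$; using unit invariance of $f$ together with $N\!\left(\tfrac{1+i}{2}\right)=\tfrac12$, the value of $f$ on the $k$-th such region is $\tfrac12 f(\bfr+i^k\bfs)$. Summing these four region values and applying the computation above gives $\sum_{k=1}^4 \tfrac12 f(\bfr+i^k\bfs)=2\bigl(f(\bfr)+f(\bfs)\bigr)$, i.e. the sum of $f$ over the four complementary regions is exactly twice the sum of $f$ over the opposite pair, which is the asserted identity.

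Finally, for the independence of $inv(v)=f(\bfr)+f(\bfs)$, I would argue via the total $S$ of all six region values, a quantity intrinsic to $v$ that references no labelling. For each of the three pairs of opposite faces of the link cube, the description of the remaining four faces as $\frac{1+i}{2}$-combinations persists; for instance, if $\{\bfp,\bfq\}$ is a second opposite pair coming from the complementary four, then $\frac{1+i}{2}(\bfp+\bfq)=i\bfr$, which is again $\bfr$ as a lax vector. Consequently the identity just proved applies to every opposite pair, so $S=(\text{pair sum})+2(\text{pair sum})=3\,inv(v)$ for each choice, forcing $inv(v)=S/3$ to be independent of the choice. The main obstacle is precisely the bookkeeping in this last step: one must verify that passing from one pair of opposite faces to another preserves the $\frac{1+i}{2}$-presentation of the complementary four (a consequence of the fact that the rotation group $S_4$ of the cube permutes its three axes), since this is what makes the identity, and hence the relation $S=3\,inv(v)$, available simultaneously for all three pairs.
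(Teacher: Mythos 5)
Your proof is correct and is essentially the verification the paper leaves to the reader (``Now one easily checks''): polarize $f$, sum over $k$, and use $\sum_{k=1}^4 i^k=0$ to kill the cross terms. The one genuinely subtle point is the normalization, and you resolve it in the only way consistent with the proposition's verbal gloss and its later use (the six values at a vertex being $a,b,c,z-a,z-b,z-c$): since your computation gives $\sum_{k=1}^4 f(\bfr+i^k\bfs)=4\bigl(f(\bfr)+f(\bfs)\bigr)$ when $f$ is applied literally to the non-primitive vectors $\bfr+i^k\bfs=(1-i)\cdot\frac{1+i}{2}(\bfr+i^k\bfs)$, the displayed identity must be read with the four right-hand values taken at the primitive lax vectors $\frac{1+i}{2}(\bfr+i^k\bfs)$ (equivalently, as values of $F$ on the four regions), where the factor $N\bigl(\tfrac{1+i}{2}\bigr)=\tfrac12$ produces exactly $2\bigl(f(\bfr)+f(\bfs)\bigr)$. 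Your derivation of the independence of $inv(v)$ from $S=3\,inv(v)$ is also sound; the persistence of the $\frac{1+i}{2}$-presentation for the other two opposite pairs can be checked directly as you indicate (for $\bfp=\frac{1+i}{2}(\bfr+\bfs)$ and $\bfq=\frac{1+i}{2}(\bfr-\bfs)$ one gets $\frac{1+i}{2}(\bfp+\bfq)=i\bfr$ and $\frac{1+i}{2}(\bfp-\bfq)=i\bfs$), or deduced from the fact, recorded in the paper, that the vertex stabilizer acts as the full rotation group $S_4$ of the cube and hence transitively on pairs of opposite faces.
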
 

 The proposition shows that it is not possible that 
 $f$ is positive at $\bfr$ and $\bfs$ while negative at the other four 
regions meeting at the vertex $v$. This shows that the ocean is a 
surface if $D=-4$. 

\smallskip 

Let $e$ be an edge of the spine $X$ corresponding to a superbasis $\{\bfu, \bfv, \bfv+\bfu\}$. 
Let $a$, $b$ and $c$ be the values of $f$ at $\bfu$, $\bfv$ and $\bfu+\bfv$, respectively. 
Let $v$ be a vertex of $e$, and let $z=inv(v)$. Then the six values of $f$ at the vertex $v$ 
are $a,b,c, z-a,z-b$ and $z-c$. In particular, $f$ is determined by the four values $a,b,c$ and $z$
 and, as one easily checks, the discriminant of $f$ is given 
by the formula 
\[ 
\Delta= 2a(a-z)+2b(b-z)+2c(c-z) + z^2. 
\] 
Next, note that the vertices of $e$ correspond to pairs (of opposite regions) 
$\{\bfu+\bfv , \bfu+i\bfv\}$ and $\{\bfu+\bfv , \bfu-i\bfv\}$. 
The following proposition relates  the values of $f$ at the two vertices of $e$. 

\begin{prop} \label{edges} 
Let $f$ be a hermitian form and $e$ an edge of the spine $X$ 
corresponding to a superbasis $\{\bfu, \bfv, \bfv+\bfu\}$.  Let $v$ and $v'$ be the 
vertices of $e$ that correspond to the pairs of opposite regions 
$\{\bfu+\bfv , \bfu+i\bfv\}$ and $\{\bfu+\bfv , \bfu-i\bfv\}$, respectively. 
Then 
\[
f(\bfu+i\bfv)+f(\bfu-i\bfv)=2[f(\bfu)+f(\bfv)]
\]
and 
\[ 
inv(v) + inv(v')= 2[f(\bfu) + f(\bfv) + f(\bfu +\bfv)]. 
\] 
\end{prop}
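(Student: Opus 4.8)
The plan is to prove both identities by direct evaluation of the hermitian form, working in the lax basis $\{\bfu,\bfv\}$ and using the bilinearity/sesquilinearity of $f$. Writing the matrix of $f$ in the basis $(\bfu,\bfv)$ as $\left(\begin{smallmatrix}a&\bar\nu\\ \nu&b\end{smallmatrix}\right)$, so that $f(\bfu)=a$, $f(\bfv)=b$, and $f(x\bfu+y\bfv)=aN(x)+bN(y)+\mathrm{tr}(\bar x y\nu)$, I would simply plug in the vectors $\bfu+i\bfv$ and $\bfu-i\bfv$ and expand.

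First I would verify the first identity. Since $N(1)=N(i)=1$ and $N(-i)=1$, the $a$- and $b$-terms in $f(\bfu+i\bfv)$ and $f(\bfu-i\bfv)$ each contribute $a+b$, for a total of $2(a+b)$. The cross terms are $\mathrm{tr}(\bar 1\cdot i\,\nu)=\mathrm{tr}(i\nu)$ and $\mathrm{tr}(\bar 1\cdot(-i)\nu)=\mathrm{tr}(-i\nu)=-\mathrm{tr}(i\nu)$, which cancel. Hence $f(\bfu+i\bfv)+f(\bfu-i\bfv)=2(a+b)=2[f(\bfu)+f(\bfv)]$, which is the first claimed equation. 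This is really just the parallelogram law for the hermitian form, exactly analogous to Conway's identity in the quadratic case.

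For the second identity, I would use the definition $inv(v)=f(\bfr)+f(\bfs)$ for any pair of opposite regions at $v$ (which is well-defined by Proposition \ref{cube}). By hypothesis $v$ corresponds to the pair $\{\bfu+\bfv,\,\bfu+i\bfv\}$ and $v'$ to the pair $\{\bfu+\bfv,\,\bfu-i\bfv\}$, so
\[
inv(v)+inv(v')=2f(\bfu+\bfv)+f(\bfu+i\bfv)+f(\bfu-i\bfv).
\]
Substituting the first identity $f(\bfu+i\bfv)+f(\bfu-i\bfv)=2[f(\bfu)+f(\bfv)]$ into the right-hand side immediately gives $inv(v)+inv(v')=2[f(\bfu)+f(\bfv)+f(\bfu+\bfv)]$, as desired.

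I expect no serious obstacle here; the entire proof is a short computation. The only point requiring mild care is confirming that the pairs $\{\bfu+\bfv,\bfu\pm i\bfv\}$ are indeed valid pairs of opposite regions at the two vertices of $e$, so that $inv(v)$ may be computed from them via Proposition \ref{cube} — but this is exactly the combinatorial description of the edge $e$ established in the paragraph preceding the proposition, so it may be taken as given. Thus the main task is simply to assemble the two pieces in the correct order, deriving the second identity as a formal consequence of the first.
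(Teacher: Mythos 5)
Your proof is correct and follows essentially the same route as the paper: the first identity is the parallelogram law (which you verify by direct expansion of the cross terms), and the second follows by adding $2f(\bfu+\bfv)$ to both sides, using that $inv$ at each vertex can be computed from the given pair of opposite regions.
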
 
 \begin{proof}  
 The first relation follows from the parallelogram law. The second is obtained from the first by 
 adding $2f(\bfu+\bfv)$  to both sides. 
 \end{proof}

Any edge $e$ is determined by a lax superbasis, and the sum of values
of $f$ at the three vectors is denoted by $inv(e)$. If $e$ connects
$v$ and $v'$ then, by Proposition \ref{edges},
\[ 
inv(v)+inv(v')=2 \cdot inv(e).
\]

\subsection{Example: $\Delta=6$}
  We shall now compute the ocean $C$, give a presentation 
of the group $PU(f)$ and work out the tiling of $\mathbb H^2$ obtained by projecting cells 
of $C$  for the hermitian form $f$ given 
by the matrix 
\[
\left(\begin{array}{cc} 
1 & \frac{1-i}{2} \\
\frac{1+i}{2} & -1 
\end{array}\right). 
\] 
The discriminant of $f$ is $\Delta =6$. Since the quaternion algebra  
$\left(\frac{-4, 6}{\bbQ}\right)$ is ramified
at primes $2$ and $3$, the form $f$ does not  
represent $0$.

 Let $(a,c)$ be a pair of values $a<0<c$ of $f$ at two vectors of a lax basis corresponding to a 
cell $\sigma$ in the ocean. Since 
\[
\Delta > Dac >0 
\]
 $(a,c)=(-1,1)$ is the only possibility. There are three types of vertices in the ocean:
$inv(v)=2$, where $f$ takes values (we pair values corresponding to
opposite sides of the cube) $(-1,3)$ $(1,1)$ and $(1,1)$; $inv(v)=0$,
$(-1,1)$, $(-1,1)$ and $(-1,1)$; $inv(v)=-2$, $(-3,1)$, $(-1,-1)$,
$(-1,-1)$. Vertices with $inv(v)=0$ have negative curvature. 
The stabilizer of a vertex $v$ in $PU(f)$ consists of orientation preserving symmetries 
 of the cube such that $f(g(\bfv))=f(\bfv)$ for all lax vectors $\bfv$ at $v$. In particular, 
it is a cyclic group of order $3$ if $inv(v)=0$  and it is a
cyclic group of order $4$ for the other two types of vertices. 
There are two types of edges in the ocean: $inv(e)=\pm 1$, and one type of 2-dimensional
cells, pictured below, with $inv(v)$ given for every vertex.

\begin{picture}(300,180)(-140,-20)

\thicklines

\put(40,40){\line(1,0){60}}
\put(40,40){\line(0,1){60}}
\put(40,100){\line(1,0){60}}
\put(100,40){\line(0,1){60}}

\put(30,30){$0$}
\put (30,100){$2$}
\put(100,30){$-2$}
\put(105,100){$0$}

\end{picture}

Under the group $U(f)$, we have 3
orbits of vertices, 2 orbits of edges, and one orbit of 2-dimensional
cells. This can be seen as follows. For the vertices, assume that 
  $inv(v)=inv(v')$. Take $g\in \GL_{2}(A)$ such that $v'=g(v)$. Since the stabilizer of $v'$ in
$\PGL_{2}(A)$ is the group of orientation preserving symmetries of a
cube we can adjust $g$, if necessary, so that
$f(g(\bfv))=f(\bfv)$ for all lax vectors $\bfv$ at $v$, i.e. $g\in
U(f)$.  (Note that for any $v$ in $C$ there is an
orientation reversing symmetry of the cube that preserves the values
on the faces.) Any 2-dimensional cell in $C$ has a vertex 
$v$ such that $inv(v)=2$. Since $U(f)$ acts transitively on vertices with $inv(v)=2$, we can 
move the cell so that $v$ is any (fixed) vertex with the invariant 2. 
There are 4 cells in $C$ containing $v$ and the stabilizer of $v$ 
in $PU(f)$ acts transitively on them. This shows that $PU(f)$ acts transitively on 2-dimensional 
cells and on each of the two types of edges.

It follows that any 2-cell is is a fundamental domain for $PU(f)$. 
The quotient $C/PU(f)$ is a sphere (more precisely, an orbifold) obtained by identifying the 
edges of the 2-cell with the same invariant. 
The covering $\pi : C \rightarrow C/PU(f)$ has ramification indices $3$,
$4$ and $4$ at the vertices with $inv(v)=0$, $inv(v)=-2$ and $inv(v)=-2$, respectively. Thus 
$PU(f)$ has the following presentation:
\[
\langle r,s,t ~|~ t^3=r^4=s^4=rst=1\rangle. 
\] 

\begin{figure}
\begin{center}
\includegraphics[scale=0.8]{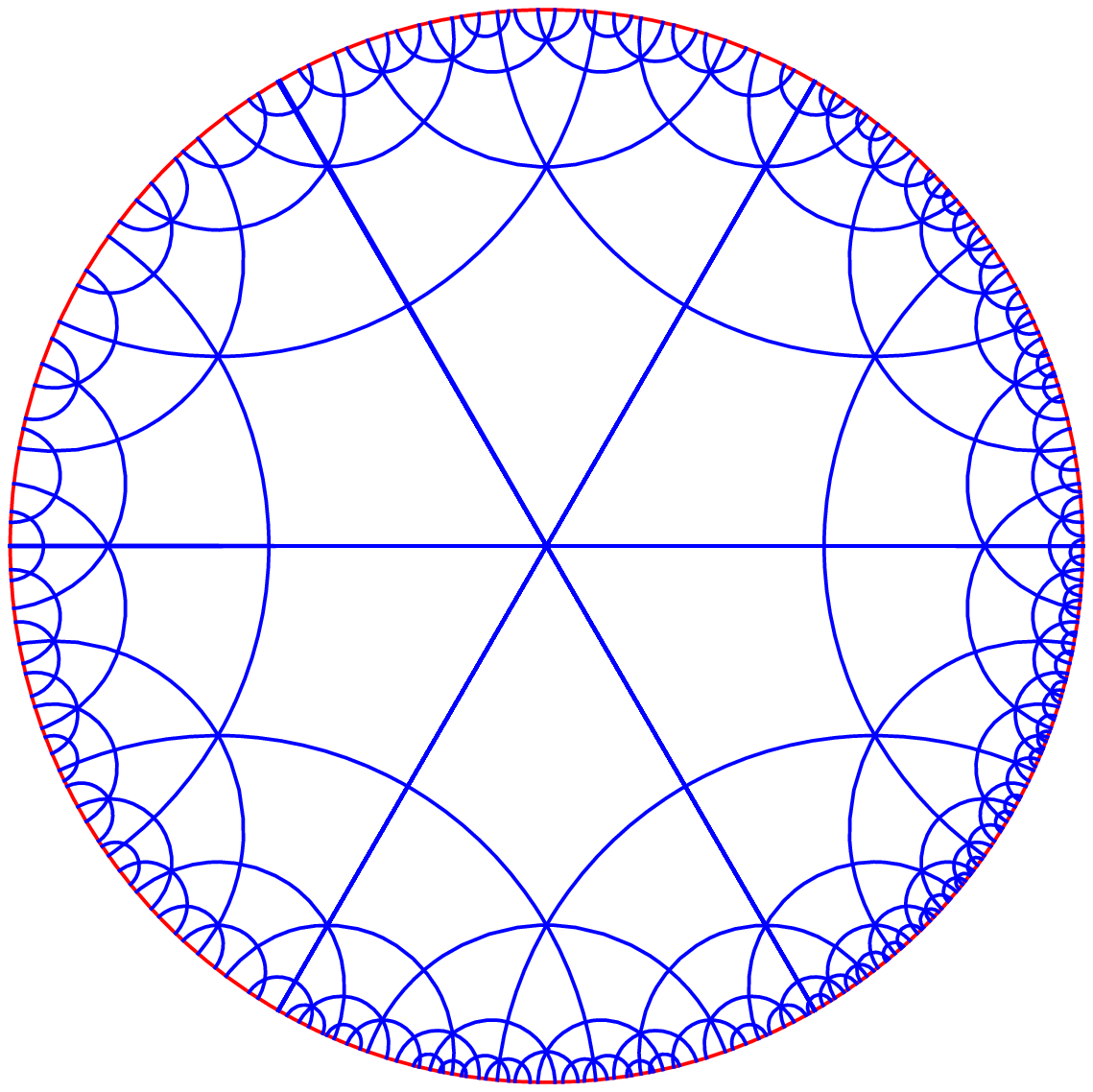}
\end{center}
\caption{\label{escher} The image of the ocean under the nearest point
  projection to the convex hull of $f=0$, for $D=-4$, $\Delta=6$.}
\end{figure}

We now work out the tiling of $\H^2$ obtained by projecting the cells
of $C$.
Let $v$ be a vertex in $C$ such that $inv(v)=0$. Since the stabilizer
of $v$ in $\PGL_{2}(A)$ is equal to the group of orientation preserving
symmetries of a cube, there is $g\in \GL_{2}(A)$ that exchanges
positive faces with negative faces.  Thus $f(g(\bfv))=-f(\bfv)$ for
every lax vector at $v$, so $f\circ g=-f$. (The subgroup of
$Stab_{PGL_2(A)}(v)$ that sends $f$ to $\pm f$ is the dihedral group
of order 6. To see this, note that the edges of the cube that belong to
both positive and negative faces form a hexagon.)

Let $\pi : C \rightarrow \mathbb H^{2}_{f}$ be the nearest point
projection.  Since $\mathbb H^{2}_{f}=\mathbb H^{2}_{-f}$, and $f\circ
g=-f$, for every edge $e$ in $C$, $g\circ \pi (e)=\pi(e')$ where $e'$
is an edge of the different invariant. It follows that $\pi(\sigma)$
is equilateral, for every 2-dimensional cell $\sigma\in C$. Since 6 of
these meet at $\pi(v)$ if $inv(v)=0$, and 4 meet at $\pi(v)$ if
$inv(v)=\pm 2$, $\pi(C)$ gives a tiling of $\mathbb H^{2}_{f}$ with
$90^{\circ}-60^{\circ}-90^{\circ}-60^{\circ}$ rhombi. It is the tiling
underlining Escher's Circle Limit I. See the figure.

\section{Hermitian forms, case $D=-3$}\label{eisenstein}

In the spirit of Conway, in this section we study the spine $X$ 
  over the Eisenstein integers.  As an application we prove 
 two sharp results on the minima of hermitian forms.

 \smallskip 
 
 Let $A=\bbZ[\rho]$ where $\rho=e^{\frac{2\pi i}{6}}$.  
 Recall that the spine $X$ has  
 a particularly nice description. Its 2-cells are regular hexagons. 
 The link $Lk_X(v)$ of any vertex $v$ is the 1-skeleton of  
 a tetrahedron. It follows that 2-cells are glued three along each edge.  
 In particular, 1-cells in $X$ correspond to 
 lax superbases, i.e. triples of lax vectors such that any two form a lax basis.  
If a 2-cell (a hexagon) corresponds to 
 a lax basis $\{\bfu,\bfv\}$, then its edges correspond (consecutively) to superbases 
 $\{\bfu,\bfv, \bfu+\rho^i \bfv\}$ for $i=1, \ldots 6$.

 Vertices of $X$ correspond to lax ultrabases, i.e. quadruples of lax vectors such that any three  form a lax superbase.  A typical ultrabasis is 
 \[
 \{\bfu,\bfv,\bfu+\bfv, \bfu +\rho \bfv\}. 
 \]
 The stabilizer in $\PGL_2(A)$ of any vertex of $X$ is the tetrahedral group $A_4$. 
 Indeed, the following matrix in the basis $(\bfu,\bfv)$ 
 \[
 \left(\begin{array}{cr} 
 1 & -1 \\
 1 &   0 
 \end{array}\right)
 \]
 cyclicly permutes the lax vectors $\bfu$, $\bfv$ and $\bfu+\bfv$ while
 keeping fixed the lax vector $\bfu+\rho \bfv$. This corresponds to a
 rotation by $120^{\circ}$ of the link tetrahedron about the axis
 passing through one vertex and the center of the opposite side. We
 have one such group of order 3 for every vertex of the tetrahedron,
 and they generate $A_4$.  The fundamental domain for the action of
 $\PGL_2(A)$ on the spine $X$ is a $30^{\circ} - 60^{\circ} -
 90^{\circ}$ triangle obtained by the barycentric subdivision of the
 hexagon.

 Let 
 \[ \alpha = b+c +d -2a
 \]
 \[ \beta=a+c+d-2b
 \]
 \[
  \gamma= a+b+d -2c 
  \]
 \[
  \delta = a+ b + c-2d 
 \] 
 where $a,b,c$ and $d$ are four integers, typically the values of a hermitian 
 form $f$ at four lax vectors parameterizing a vertex of $X$. 
 One easily verifies the following proposition: 
 
 \begin{prop}  \label{formula}
 Let 
 $f$  be an integral hermitian form. Let $\{\bfu,\bfv, \bfu+\bfv, \bfu+\rho \bfv\}$ be an 
 ultrabasis, and assume that $f$ takes values $a$, $b$, $c$ and $d$ at these four vectors. Then in the basis $(\bfu,\bfv)$ the form $f$ is written as 
  \[
  f(x,y) =\frac{1}{3}[\beta N(x) + \alpha N(y) + \gamma N(x-y) +\delta N(\rho x-y)] 
  \]
  and the discriminant of $f$ is 
 \[ 
\Delta= a^{2}+b^{2}+c^{2}+d^{2}-ab-ac-ad-bc-bd-cd=
-\frac{\alpha a+\beta b + \gamma c + \delta d}{2}. 
 \]
 \end{prop}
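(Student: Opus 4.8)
The plan is to work throughout in the basis $(\bfu,\bfv)$, where a hermitian form reads $f(x,y)=f(\bfu)N(x)+f(\bfv)N(y)+tr(\nu x\bar y)$ for a suitable $\nu\in k$, and to exploit the two defining identities of $\rho=e^{2\pi i/6}$, namely $tr(\rho)=\rho+\bar\rho=1$ and $N(\rho)=\rho\bar\rho=1$. I would first observe that the right-hand side of the claimed formula, say $g(x,y)=\frac13[\beta N(x)+\alpha N(y)+\gamma N(x-y)+\delta N(\rho x-y)]$, is automatically a hermitian form, being a real linear combination of the rank-one hermitian forms $N(\ell)$ attached to the linear functionals $\ell=x,\ y,\ x-y,\ \rho x-y$. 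Viewed in $\mathbb P^1(\bbC)$, the four vectors $\bfu,\bfv,\bfu+\bfv,\bfu+\rho\bfv$ correspond to $\infty,0,1,\bar\rho$, which do not lie on a common line or circle (the first three pin down the real axis, and $\bar\rho$ is not real). Hence, by Proposition \ref{hermitian}, it suffices to check that $g$ takes the values $a,b,c,d$ on these four vectors; this forces $g=f$.

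Carrying out the four evaluations is then a short computation using $N(\rho)=1$ and $N(\rho-1)=N(\rho)-tr(\rho)+1=1$. For instance $g(\bfu)=\frac13(\beta+\gamma+\delta)$ and $g(\bfv)=\frac13(\alpha+\gamma+\delta)$, while the vanishing of one of the norm terms at the relevant vector gives $g(\bfu+\bfv)=\frac13(\alpha+\beta+\delta)$ and $g(\bfu+\rho\bfv)=\frac13(\alpha+\beta+\gamma)$. The key bookkeeping is the set of four three-term identities
\[
\beta+\gamma+\delta=3a,\quad \alpha+\gamma+\delta=3b,\quad \alpha+\beta+\delta=3c,\quad \alpha+\beta+\gamma=3d,
\]
each of which follows immediately from the definitions: in $\beta+\gamma+\delta$ the variable $a$ has coefficient $+1$ in all three terms, while each of $b,c,d$ occurs once with coefficient $-2$ and twice with $+1$, and similarly for the other three sums. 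These identities deliver $g(\bfu)=a$, $g(\bfv)=b$, $g(\bfu+\bfv)=c$, $g(\bfu+\rho\bfv)=d$, establishing the formula for $f$.

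For the discriminant I would read off $\nu$ from this formula: expanding $N(x-y)=N(x)+N(y)-tr(x\bar y)$ and $N(\rho x-y)=N(x)+N(y)-tr(\rho x\bar y)$ and collecting the cross terms gives $f(x,y)=aN(x)+bN(y)-\frac13 tr((\gamma+\delta\rho)x\bar y)$, so that $\nu=-\frac13(\gamma+\delta\rho)$ and $N(\nu)=\frac19(\gamma^2+\gamma\delta+\delta^2)$, again using $tr(\rho)=N(\rho)=1$. Then $\Delta=D(ab-N(\nu))=-3ab+\frac13(\gamma^2+\gamma\delta+\delta^2)$, and substituting $\gamma=a+b+d-2c$, $\delta=a+b+c-2d$ yields the symmetric expression $a^2+b^2+c^2+d^2-ab-ac-ad-bc-bd-cd$. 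Finally, the second equality for $\Delta$ is a purely formal identity: expanding $\alpha a+\beta b+\gamma c+\delta d$ gives $-2(a^2+b^2+c^2+d^2)+2(ab+ac+ad+bc+bd+cd)$, whose negative half is exactly the symmetric expression above. The only genuine obstacle is the algebraic bookkeeping in this last step, namely checking $\gamma^2+\gamma\delta+\delta^2=3(a^2+b^2+c^2+d^2)+6ab-3ac-3ad-3bc-3bd-3cd$, but this is a routine finite expansion once the value of $\nu$ is in hand.
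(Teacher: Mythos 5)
Your proof is correct and complete; the paper itself offers no argument here beyond ``one easily verifies,'' and your verification is the natural one, using the paper's own Proposition \ref{hermitian} together with the identities $N(\rho)=tr(\rho)=1$, $N(\rho-1)=1$ and the four sums $\beta+\gamma+\delta=3a$, etc. The extraction of $\nu=-\tfrac13(\gamma+\delta\rho)$ and the resulting computation of $\Delta$ also check out.
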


 The ultrabases $\{\bfu,\bfv, \bfu+\bfv, \bfu+\rho \bfv\}$ and 
 $\{\bfu,\bfv, \bfu+\rho \bfv, \bfu+\rho^{2} \bfv\}$  intersect in the superbasis 
 $\{\bfu,\bfv, \bfu+\rho \bfv\}$, thus the vertices corresponding to 
the two ultrabases are connected by an edge in the topograph (i.e. the
spine). 
 
 \begin{prop} \label{climbing} 
 We have the following for the adjacent vertices: 
 \[
 f(\bfu+\bfv) + f(\bfu+\rho^{2}\bfv)= f(\bfu)+f(\bfv)+f(\bfu+\rho \bfv). 
 \]
 \end{prop}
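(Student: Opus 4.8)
The plan is to work in the basis $(\bfu,\bfv)$ and reduce the asserted equality to the minimal polynomial relation satisfied by $\rho$. First I would record the matrix of $f$ in this basis as $\left(\begin{smallmatrix} a & \nu \\ \bar\nu & c\end{smallmatrix}\right)$, so that $a=f(\bfu)$, $c=f(\bfv)$, and for $x,y\in k$
\[
f(x\bfu+y\bfv)=aN(x)+cN(y)+tr(\nu x\bar y),
\]
exactly as in the proof of Proposition \ref{hermitian}. The five vectors appearing in the statement have coordinate pairs $(1,0)$, $(0,1)$, $(1,1)$, $(1,\rho)$ and $(1,\rho^2)$, and since $\rho$ is a unit we have $N(\rho)=N(\rho^2)=1$.

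Next I would simply evaluate $f$ at these five vectors. One gets $f(\bfu)=a$, $f(\bfv)=c$, and
\[
f(\bfu+\bfv)=a+c+tr(\nu),\quad f(\bfu+\rho\bfv)=a+c+tr(\nu\bar\rho),\quad f(\bfu+\rho^2\bfv)=a+c+tr(\nu\overline{\rho^2}).
\]
Substituting into the proposition, the diagonal contributions match automatically: both sides carry $2a+2c$, because each second coordinate $1,\rho,\rho^2$ has norm $1$. What remains is to compare the off-diagonal contributions, so the identity to be proved becomes
\[
tr\bigl(\nu(1+\overline{\rho^2})\bigr)=tr(\nu\bar\rho).
\]

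Finally I would invoke the defining relation of $\rho=e^{2\pi i/6}$, namely $\rho^2-\rho+1=0$, equivalently $1+\rho^2=\rho$; conjugating gives $1+\overline{\rho^2}=\bar\rho$, so the arguments of the two traces above are literally equal and the identity holds. I do not expect a genuine obstacle here: the entire content is the single algebraic identity $1+\rho^2=\rho$ for a primitive sixth root of unity. The only points requiring care are bookkeeping — fixing the convention for the off-diagonal term correctly (so that the cross term is $tr(\nu x\bar y)$ rather than its conjugate, in agreement with the computation of $f(\alpha,1)=aN(\alpha)+c+tr(\nu\alpha)$ used earlier), and observing that $\rho$ and $\rho^2$ are units so the $c$-contributions coincide. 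One could phrase the same computation as an instance of the parallelogram law, but the direct evaluation above is the most transparent route.
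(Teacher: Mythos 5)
Your proof is correct and is essentially the paper's own argument: both evaluate $f$ directly in the basis $(\bfu,\bfv)$, observe that the diagonal contributions $2a+2c$ match on both sides since $\rho$ and $\rho^2$ are units, and reduce the identity to $1+\rho^2=\rho$ (the paper writes the cross terms as $tr(\rho\nu)$ rather than $tr(\nu\bar\rho)$, a harmless convention difference since the conjugate relation $1+\overline{\rho^2}=\bar\rho$ holds as well). Your extra care with the off-diagonal convention is reasonable but does not change the substance.
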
 

 \begin{proof} 
Assume that the matrix of the form $f$ in the basis $(\bfu,\bfv)$ has integers
$a$, $c$ on the diagonal, and $\nu$, $\bar{\nu}$ off the diagonal.
Then the left hand side is $(a+c + tr(\nu))+(a+c+ tr(\rho^{2} \nu))$
while the right hand side is $a + c + (a+ c + tr(\rho \nu))$. The proposition follows 
since $1+\rho^{2}=\rho$. 
\end{proof}

Define $inv(v)$ as the sum of the values of $f$ on the 4 lax
vectors in the corresponding ultrabasis.  Any edge $e$ is determined
by a lax superbasis. Define $inv(e)$ as the sum of the values of $f$ on
the 3 lax vectors in the superbasis.  If $e$ connects $v$ and $v'$
then, by Proposition \ref{climbing}, 
\[
inv(v)+inv(v')=3 \cdot inv(e).
\]

Note that Proposition \ref{climbing} implies that $inv(v)\leq inv(v')$
for every vertex $v'$ adjacent to $v$ if and only if $\alpha, \beta, \gamma, 
\delta \geq 0$.  Following Conway, we shall call the vertex $v$ a {\em well}. 

\begin{prop} \label{well} Let $f$ be an integral 
 binary hermitian form.  Let $v$ be a vertex of $X$. Let $a,b,c$ and $d$ be the values of 
 $f$ at the 4 lax vectors in the corresponding ultrabasis.  Let $\alpha,\beta,\gamma$ and 
 $\delta$ be the integers defined by $a,b,c$ and $d$ as above. 
 The following are equivalent. 
 \begin{enumerate}
 \item The vertex $v$ is a well, i.e. $\alpha, \beta, \gamma, \delta \geq 0$. 
 \item If $\bfw$ is a lax vector that does not not belong to the ultrabasis of $v$, 
 then $f(\bfw)\geq a,b,c,d$. 
 \item For every vertex $u$,  $inv(v)\leq inv(u)$. 
 \end{enumerate} 
 In particular, a well exists if and only if $f$ is positive definite. The well is unique if 
 $\alpha, \beta, \gamma, \delta >0$.  
 \end{prop}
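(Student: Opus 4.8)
The plan is to establish the cycle of implications $(1)\Rightarrow(2)\Rightarrow(3)\Rightarrow(1)$, with Proposition \ref{formula} doing the heavy lifting. First I would rewrite $f$ on an arbitrary lax vector. Writing $\bfw=p\bfu+q\bfv$ with $p,q\in A$ coprime, Proposition \ref{formula} gives
\[
3f(\bfw)=\beta N(p)+\alpha N(q)+\gamma N(p-q)+\delta N(\rho p-q).
\]
The four norms appearing here vanish precisely when $\bfw$ equals, respectively, $\bfv,\bfu,\bfu+\bfv,\bfu+\rho\bfv$ (up to units), that is, exactly at the four vectors of the ultrabasis; hence for any lax vector $\bfw$ \emph{not} in the ultrabasis all four norms are positive integers, so each is $\geq1$. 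For $(1)\Rightarrow(2)$ I observe that each value $a,b,c,d$ is obtained by setting the relevant norm to $0$ and the other three to $1$; concretely $3a=\beta+\gamma+\delta$, and analogously for $b,c,d$. Subtracting,
\[
3(f(\bfw)-a)=\beta(N(p)-1)+\alpha N(q)+\gamma(N(p-q)-1)+\delta(N(\rho p-q)-1),
\]
which is a sum of products of the nonnegative numbers $\alpha,\beta,\gamma,\delta$ with nonnegative quantities (off the ultrabasis each norm is $\geq1$), hence $\geq0$. Running the same computation for $b,c,d$ yields $f(\bfw)\geq a,b,c,d$, which is $(2)$.

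For $(2)\Rightarrow(3)$, let $u$ be any vertex with ultrabasis $\{\bfw_1,\bfw_2,\bfw_3,\bfw_4\}$ and set $M=\max(a,b,c,d)$. If $k$ of the $\bfw_i$ lie in the ultrabasis of $v$, their $f$-values are among $a,b,c,d$, while each of the remaining $4-k$ vectors has $f$-value $\geq M$ by $(2)$. Since the $4-k$ unaccounted values of $v$ each lie below $M$, a one-line comparison gives $inv(u)\geq inv(v)$ in every case $k=0,1,2,3,4$. For $(3)\Rightarrow(1)$ I use that $v$ has exactly four incident edges in $X$ (the link is the $1$-skeleton of a tetrahedron), one for each triple omitted from the ultrabasis. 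From $inv(v)+inv(v')=3\,inv(e)$ and $inv(e)=inv(v)-\xi$ when the omitted vector has value $\xi$, the neighbour across that edge satisfies $inv(v')-inv(v)=inv(v)-3\xi$, which equals $\alpha,\beta,\gamma,\delta$ for $\xi=a,b,c,d$. Thus $(3)$ forces $\alpha,\beta,\gamma,\delta\geq0$, which is $(1)$.

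For the concluding assertions: if $f$ is positive definite then $inv$ takes positive values in the discrete set $\tfrac1n\bbZ$, so it attains a minimum, and any minimizing vertex satisfies $(3)$, hence is a well; conversely, if a well exists then $\alpha,\beta,\gamma,\delta\geq0$, so Proposition \ref{formula} exhibits $f$ as a nonnegative combination of norms, i.e. $f\geq0$. Uniqueness when $\alpha,\beta,\gamma,\delta>0$ follows by rerunning the argument with strict inequalities: the term $\delta N(\rho p-q)\geq\delta>0$ makes $f(\bfw)>M$ strictly off the ultrabasis, whence $inv(u)>inv(v)$ for every $u\neq v$, so $v$ is the unique minimizer and the unique well. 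I expect the reformulation used in $(1)\Rightarrow(2)$ — recognizing $3(f(\bfw)-a)$ as a manifestly nonnegative expression once all four norms are known to be $\geq1$ — to be the conceptual crux. The genuinely delicate point is the existence claim: the clean argument only yields $f\geq0$ from a well, so upgrading positive-semidefiniteness to positive-definiteness (excluding degenerate forms, which can themselves possess a well) requires the anisotropy of $f$, and this is the step I would treat most carefully.
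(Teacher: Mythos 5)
Your proof is correct and follows the paper's route: the implication $(1)\Rightarrow(2)$ via the norm decomposition of Proposition \ref{formula} is exactly the paper's argument (the paper bounds $3f(\bfw)\geq\alpha+\beta+\gamma+\delta=3a+\alpha=3b+\beta=\cdots$, which is your identity rearranged), while $(2)\Rightarrow(3)$ and $(3)\Rightarrow(1)$ are simply declared trivial there, and your versions of them — the comparison against $M=\max(a,b,c,d)$ and the computation $inv(v')-inv(v)=inv(v)-3\xi$ at the four neighbours — are the intended ones. Your closing caveat is well taken, since the paper likewise only deduces $f\geq 0$ from the existence of a well, but the extra hypothesis needed to rule out degenerate forms such as $f(x,y)=N(\rho x-y)$ (which has a well yet is not positive definite) is nondegeneracy, i.e. $\Delta\neq 0$ — implicit throughout, as the discriminant is taken in $\bbQ^{\times}/N(k^{\times})$ — rather than anisotropy, which is not assumed in this proposition; a nondegenerate positive semidefinite real quadratic form is automatically positive definite.
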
 

\begin{proof}  Implications (2) $\Rightarrow$ (3) and (3) $\Rightarrow $(1) are trivial.
 We must show that (1) implies (2). Assume that the vertex $v$ corresponds to 
the ultrabasis $\{\bfu,\bfv, \bfu+\bfv, \bfu+\rho \bfv\}$. Write $w=x\bfu+y\bfv$, 
where $x,y\in A$. Since $\bfw$ different from $\bfu,\bfv, \bfu+\bfv$  and $\bfu+\rho \bfv$,
 the formula for $f$ in Proposition \ref{formula} implies that  
 \[
 f(x,y) \geq \frac{1}{3}(\alpha+\beta+\gamma+\delta) = a +\frac{\alpha}{3} 
 = b+\frac{\beta}{3} =c+ \frac{\gamma}{3} =d+\frac{\delta}{3}.
 \] 
 This implies that (2) holds if $v$ is a well. If $f$ is positive then the function $inv$ has 
 a minimum, so a well exists. Conversely, if there is a well, then $f$ is positive by the formula 
 in Proposition \ref{formula}. 
 If $\alpha, \beta, \gamma, \delta >0$ then $v$ is the unique 
 minimum of $inv$ so the well is unique. 
 \end{proof}

 One can use Proposition \ref{well} to classify positive definite
 forms of a fixed discriminant.  Take, for example, $\Delta =-2$. Then
 by Proposition \ref{formula}
 \[
 4= \alpha a + \beta b + \gamma c + \delta d\geq 
(\alpha+\beta+\gamma+\delta)s=(a+b+c+d)s 
 \] 
where $s$ is the smallest of the four values $a$, $b$, $c$ and $d$. 
It follows that $a=b=c=d=1$ at a well. Thus there is only one class 
 of forms of discriminant $-2$. 
 
\begin{thm} \label{T2}  Let $A=\bbZ[\rho]$, and let 
 $f$ be a primitive, integral, positive definite binary hermitian form
of discriminant $\Delta$. Then 
there exists $\bfv\neq 0$ in $A^2$ such that $f(\bfv)\leq \sqrt{-\frac{\Delta}{2}}$. 
Moreover, the equality is achieved only for the unique class of forms of 
discriminant $-2$. 
\end{thm}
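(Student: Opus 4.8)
The plan is to exploit Proposition \ref{well}, which says that a positive definite form always has a well, together with the two expressions for the discriminant $\Delta$ given in Proposition \ref{formula}. At the well $v$ the values $a,b,c,d$ of $f$ on the ultrabasis satisfy $\alpha,\beta,\gamma,\delta\geq 0$, and the smallest of $a,b,c,d$ is a candidate for the sought-after small value $f(\bfv)$. So first I would set $s=\min\{a,b,c,d\}$ and aim to bound $s$ from above in terms of $\Delta$.

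The key inequality comes from the identity
\[
-2\Delta = \alpha a + \beta b + \gamma c + \delta d
\]
(note $\Delta<0$ since $f$ is positive definite, so the right side is positive). At a well all four of $\alpha,\beta,\gamma,\delta$ are nonnegative, and since $\alpha+\beta+\gamma+\delta = (b+c+d-2a)+\cdots = 0$ — wait, one must check this sum: adding the four definitions gives $\alpha+\beta+\gamma+\delta = 2(a+b+c+d)-2(a+b+c+d)=0$. This is the main subtlety: the naive bound $\alpha a+\cdots\geq (\alpha+\beta+\gamma+\delta)s$ used earlier for $\Delta=-2$ is then vacuous, so I would instead use a genuinely quadratic estimate. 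The cleaner route is to bound $s$ directly via the first displayed formula for $\Delta$ in Proposition \ref{formula}, namely the symmetric quadratic $\Delta = a^2+b^2+c^2+d^2-ab-ac-ad-bc-bd-cd$. Treating this as a quadratic form in $a,b,c,d$, I would show that subject to $a,b,c,d\geq s>0$ (all values at a well are positive) the quantity $-\Delta = \tfrac12\sum_{i<j}(x_i-x_j)^2$-type expression forces $s^2 \leq -\Delta/2$, i.e. $s\leq\sqrt{-\Delta/2}$; taking $\bfv$ to be the ultrabasis vector achieving the minimum then gives $f(\bfv)=s\leq\sqrt{-\Delta/2}$.

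The hard part will be the extremal analysis: I expect the main obstacle is pinning down exactly when equality $s=\sqrt{-\Delta/2}$ holds and identifying it with the discriminant $-2$ class. For this I would analyze the quadratic form $-\Delta$ on the region $\{a,b,c,d\geq s\}$, determine the minimizing configuration of $(a,b,c,d)$ for fixed $s$, and show that equality forces all four values equal, $a=b=c=d$. Since the values are positive integers at a well, $a=b=c=d=t$ gives $\Delta = -2t^2$ (from $\sum_{i<j}(x_i-x_j)^2=0$, so $\Delta=0$?)—so I must recheck which normalization of $\Delta$ applies and recompute $\Delta$ at $a=b=c=d$. The equality case should reduce to the $\Delta=-2$ computation already carried out in the text, where $a=b=c=d=1$ is shown to be the unique class, so I would invoke that computation verbatim to conclude uniqueness. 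The genuinely new content is the sharp inequality $f(\bfv)\leq\sqrt{-\Delta/2}$; I would obtain it by a short Lagrange-multiplier or completion-of-squares argument on the four-variable quadratic, being careful that integrality and the well condition together rule out all intermediate equality configurations except the totally symmetric one.
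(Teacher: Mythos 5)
The reason you abandon the paper's approach is an arithmetic slip: $\alpha+\beta+\gamma+\delta$ is not $0$. Each of $a,b,c,d$ occurs with coefficient $+1$ in three of the four expressions and with coefficient $-2$ in the remaining one, so
\[
\alpha+\beta+\gamma+\delta=3(a+b+c+d)-2(a+b+c+d)=a+b+c+d,
\]
which is strictly positive at a well of a positive definite form. The ``naive'' bound you dismiss as vacuous is therefore exactly the paper's proof: at a well $\alpha,\beta,\gamma,\delta\geq 0$ and $a,b,c,d\geq s$, so
\[
-2\Delta=\alpha a+\beta b+\gamma c+\delta d\geq(\alpha+\beta+\gamma+\delta)s=(a+b+c+d)s\geq 4s^2,
\]
giving $s\leq\sqrt{-\Delta/2}$ in one line, with equality forcing $a=b=c=d$, hence (since $\alpha=\beta=\gamma=\delta=a$ in that case) $-2\Delta=4a^2$, and primitivity gives $a=1$, $\Delta=-2$. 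Your related worry about the symmetric point is also off: $\Delta=a^2+b^2+c^2+d^2-\sum_{i<j}x_ix_j$ is not $\tfrac12\sum_{i<j}(x_i-x_j)^2$, and at $a=b=c=d=t$ it equals $4t^2-6t^2=-2t^2$, not $0$.

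The replacement argument you propose does not close the gap. Bounding $s$ using only the constraints $a,b,c,d\geq s>0$ on the quadratic $\Delta=\sum x_i^2-\sum_{i<j}x_ix_j$ cannot work: for $(a,b,c,d)=(100,1,1,1)$ one gets $\Delta=9703>0$, so the inequality $s^2\leq-\Delta/2$ fails on that region. The well inequalities $\alpha,\beta,\gamma,\delta\geq0$ are indispensable, and your sketch never actually feeds them into the estimate; it only mentions them at the end as a tool for the equality analysis. So as written the proposal establishes neither the inequality nor the equality case. Once you correct the sum $\alpha+\beta+\gamma+\delta=a+b+c+d$, the linear-in-$(\alpha,\beta,\gamma,\delta)$ argument is both the shortest route and the paper's route, and your final step (reducing the equality case to the unique class of discriminant $-2$, already classified in the text) is then fine.
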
 

\begin{proof} 
Let $a,b,c$ and $d$ be the values of $f$ at a well. Let $s$ be the minimum 
of these values. Then 
\[
-2\Delta = \alpha a + \beta b + \gamma c + \delta d \geq 
(\alpha + \beta  + \gamma  + \delta )s=
(a+b+c+d)s \geq 4s^2. 
\]
with equalities if and only if $a=b=c=d$. Since $f$ is primitive, this forces
$a=b=c=d=1$, and $\Delta =-2$, as claimed. 
\end{proof}

Just as Conway's topograph, the spine $X$ can be defined combinatorially in terms of lax vectors, 
without any reference to $\mathbb H^3$. 
We shall now prove that $X$, considered simply as an affine cell complex, is contractible. 
The proof uses PL Morse Theory \cite{Be}. We start with the following lemma. 

\begin{lemma} \label{morse}
Let $f$ be an integral hermitian form of discriminant $\Delta$. 
Let $\sigma$ be a 2-dimensional cell in $X$. Identify $\sigma$ with the convex 
  hull of $\rho^i$, $i=0, \ldots ,5$ in $\mathbb C$. Then the function $inv$ can be extended 
  from the vertices of $\sigma$ to an affine function 
$inv: \mathbb C\rightarrow\mathbb R$. Moreover, 
if $3$ does not divide $\Delta$ then the affine function is not constant on any edge of 
$\sigma$. 
\end{lemma}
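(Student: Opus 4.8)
The plan is to show two things: first that $inv$ extends to an affine function on the hexagon, and second that when $3\nmid\Delta$ this affine function is nonconstant on every edge. For the extension, I would use Proposition \ref{formula}, which writes $f$ in the basis $(\bfu,\bfv)$ as a fixed $\bbQ$-linear combination of the norm forms $N(x)$, $N(y)$, $N(x-y)$, $N(\rho x - y)$ with coefficients $\frac13\beta,\frac13\alpha,\frac13\gamma,\frac13\delta$. The key observation is that $inv$ at a vertex is a linear functional of the four values $a,b,c,d$, and by Proposition \ref{climbing} the values of $inv$ at adjacent vertices satisfy $inv(v)+inv(v')=3\,inv(e)$; so the natural candidate for the affine extension is simply the value obtained by reading off $inv$ as an affine function of the planar coordinates of $\rho^i$.

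Concretely, identifying the six vertices of $\sigma$ with $\rho^0,\dots,\rho^5\in\bbC$, I would compute $inv$ at each vertex (as a function of $a,b,c,d$ via the formulas defining $\alpha,\beta,\gamma,\delta$ and Proposition \ref{climbing}, which lets one propagate $inv$ around the hexagon), and then check that these six numbers are the restriction to the vertices of a single affine map $\bbC=\bbR^2\to\bbR$. Six prescribed values on six points lie on an affine function precisely when they satisfy the linear relations coming from the fact that an affine function on the plane has only three degrees of freedom; for a centrally symmetric regular hexagon these relations are the three equalities $inv(\rho^i)+inv(\rho^{i+3})$ being independent of $i$ (collinearity through the center forces the opposite-vertex averages to agree). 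I expect these to follow directly from the symmetry of the $\alpha,\beta,\gamma,\delta$ formulas together with Proposition \ref{climbing}; this is the routine verification the lemma advertises with "one easily verifies."

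For the second assertion, an affine function is constant on an edge $[\rho^i,\rho^{i+1}]$ exactly when it takes equal values at the two endpoints, i.e. when $inv(v)=inv(v')$ for the two vertices $v,v'$ of that edge. By Proposition \ref{climbing}, $inv(v)+inv(v')=3\,inv(e)$, so $inv(v)=inv(v')$ forces $2\,inv(v)=3\,inv(e)$, whence $3\mid 2\,inv(v)$ and therefore $3\mid inv(v)$. The plan is then to relate $inv(v)\bmod 3$ to $\Delta\bmod 3$. From Proposition \ref{formula} we have $2\Delta = -(\alpha a+\beta b+\gamma c+\delta d)$ and $\alpha+\beta+\gamma+\delta = a+b+c+d = inv(v)$; working modulo $3$ and using that $\alpha\equiv -a-2a\equiv\dots$ (each of $\alpha,\beta,\gamma,\delta$ is congruent to $-3$ times a value plus $inv(v)$, hence $\equiv inv(v)\pmod 3$), one finds $\Delta\equiv -inv(v)^2\cdot(\text{const})\pmod 3$, so that $3\mid inv(v)$ would force $3\mid\Delta$. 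Thus $3\nmid\Delta$ rules out $inv(v)\equiv 0\pmod 3$, hence rules out $inv(v)=inv(v')$ on any edge.

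The main obstacle I anticipate is getting the modular arithmetic in the last step exactly right: I need the precise congruence class of $\Delta$ modulo $3$ in terms of $inv(v)$, which requires carefully reducing both $\Delta=a^2+b^2+c^2+d^2-ab-ac-ad-bc-bd-cd$ and $inv(v)=a+b+c+d$ modulo $3$ and checking that $3\mid inv(v)$ indeed implies $3\mid\Delta$ (equivalently, computing $\Delta\bmod 3$ as a quadratic form in $a,b,c,d$ and verifying it lies in the ideal generated by $inv(v)\bmod 3$). The cleanest route is to note $\Delta\equiv a^2+b^2+c^2+d^2+ab+\dots \pmod 3$ and compare with $(a+b+c+d)^2\bmod 3$; since $(a+b+c+d)^2 = \sum a_i^2 + 2\sum_{i<j}a_ia_j$, the combination $\Delta$ and $inv(v)^2$ are congruent up to a factor modulo $3$, giving $\Delta\equiv -inv(v)^2\pmod 3$ (as $2\equiv -1$), from which $3\mid inv(v)\Rightarrow 3\mid\Delta$ is immediate. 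The affineness of the extension and the reduction of nonconstancy to the congruence are conceptually straightforward; the arithmetic identity $\Delta\equiv -inv(v)^2\pmod 3$ is the one computation to pin down carefully.
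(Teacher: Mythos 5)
Your handling of the second assertion is correct and takes a genuinely different route from the paper. The paper shows that $inv(v_0)=inv(v_1)$ forces the off-diagonal entry $\nu$ of $f$ (in the basis $(\bfu,\bfv)$) to be a real, hence integral, multiple of $\rho^2$, so that $N(\nu)\in\bbZ$ and $D=-3$ divides $\Delta=D(ac-N(\nu))$. You instead argue arithmetically: equality at the endpoints of an edge gives $2\,inv(v)=3\,inv(e)\in 3\bbZ$, hence $3\mid inv(v)$, and then $\Delta\equiv inv(v)^2\pmod 3$ finishes the job. That congruence is right up to your sign: since $\Delta-(a+b+c+d)^2=-3\sum_{i<j}a_ia_j$, one has $\Delta\equiv +\,inv(v)^2\pmod 3$, not $-\,inv(v)^2$; the conclusion $3\mid inv(v)\Rightarrow 3\mid\Delta$ is unaffected. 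This is a clean, more elementary alternative to the paper's argument for that half.

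The genuine gap is in the first assertion. Your criterion for six prescribed values $y_0,\dots,y_5$ at the vertices $\rho^0,\dots,\rho^5$ to be the restriction of an affine function is insufficient: an affine function on $\bbR^2$ has $3$ degrees of freedom, so three independent linear relations among the six values are required, whereas the equalities $y_i+y_{i+3}=y_j+y_{j+3}$ you propose are only two independent relations. Concretely, $y_0=1$, $y_3=-1$, $y_1=y_2=y_4=y_5=0$ satisfies all your opposite-vertex conditions but is not affine (an affine function vanishing at the non-collinear points $\rho,\rho^2,\rho^4,\rho^5$ is identically zero). The missing relation is the vanishing of the alternating sum $y_0-y_1+y_2-y_3+y_4-y_5$, equivalently $y_0+y_2+y_4=y_1+y_3+y_5$, which comes from $\rho^2=\rho-1$; and you would still have to actually verify all three relations for $inv$, which you defer to symmetry. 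The paper sidesteps this entirely by computing $inv(v_i)$ in closed form: with $f$ having diagonal entries $a,c$ and off-diagonal $\nu$ in the basis $(\bfu,\bfv)$, one finds $inv(v_i)=3a+3c+tr(\rho^i\nu)+tr(\rho^{i+1}\nu)$, and since $\rho^i+\rho^{i+1}=(1+\rho)\rho^i$ depends $\bbR$-linearly on the vertex position $\rho^i$ and $z\mapsto tr(z\nu)$ is $\bbR$-linear, affineness is immediate. To repair your proof you should either carry out this explicit computation or identify and check the full set of three relations.
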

\begin{proof}
Assume that $\sigma$ corresponds to a lax basis $\{\bfu,\bfv\}$. Then the vertices $v_i$ 
of $\sigma$ correspond 
to ultrabases $\{\bfu,\bfv,\bfu+\rho^i\bfv, \bfu+\rho^{i+1}\bfv\}$, $i=0,\ldots ,5$. Assume that the matrix 
of $f$ in the basis $(\bfu,\bfv)$ has integers $a,c$ on the diagonal and $\nu,\bar\nu\in A^*$ 
off the diagonal. Then 
\[ 
inv(v_i)=3a+3c+ tr(\rho^i\nu) + tr(\rho^{i+1} \nu). 
\] 
Since $z\mapsto tr(z\nu)$ is a linear function on $\mathbb C$, it follows that $inv$ extends to 
an affine function on $\mathbb C$. If $inv(v_0)=inv(v_1)$ then $tr(\nu)=tr(\rho^2\nu)$ 
and this implies that $\nu$ is a real multiple of $\rho^2$. Since $\nu\in A^*$, $\nu$ must be 
an integral multiple of $\rho^2$. This implies that $N(\nu)$ is an integer, and the formula 
$\Delta=D(ac-N(\nu))$ implies that $D=-3$ divides $\Delta$. Thus there are no 
horizontal edges if $3$ does not divide $\Delta$.  
\end{proof} 

Let $f$ be a positive form with $\Delta=-2$. At the unique well $f$ has the value 1 at all 4 
lax vectors in the ultrabasis. 
Lemma \ref{morse} implies that $inv$  extends to a PL Morse function on 
the affine complex $X$, with the minimum at the well. 
Let $v$ be a vertex of $X$. Let $\sigma$ be a cell in $X$ that contains $v$ in its closure. 
We say that $\sigma$ is a descending cell if $inv(x)<inv(v)$ for all $x$ in the interior 
of $\sigma$. The descending link of $v$ is the link of $v$ in the union
of all descending cells. Now assume that $f$ has the values $a,b,c$
and $d$ at the 4 lax vectors in the ultrabasis. Let
$\alpha,\beta,\gamma$ and $\delta$ be defined by $a,b,c$ and $d$, as
before. By Lemma \ref{climbing} the descending edges correspond to
negative numbers amongst $\alpha,\beta,\gamma$ and $\delta$.  If we
order $a\geq b\geq c\geq d$, then it is clear that at least $\gamma$
and $\delta$ are positive.  Thus, if $v$ is not the well, we either
have one descending edge, or two descending edges. In the first case
the descending link is one point. In the second case, the hexagon
between two descending edges is also a descending cell, and the
descending link is a segment. In any case the descending link is
contractible. This proves that $X$ is contractible (see \cite{Be}).
 
 \smallskip 

If $f$ is indefinite and we are at a vertex where all values are 
positive then we can move to the ocean by 
passing to vertices with smaller average values. Note that along the way 
to the ocean we did not increase the minimum of $|f|$ at four values at 
each vertex. Thus the minimum of $|f|$ is achieved at the ocean $C$. 

Now assume that $v$ is 
a vertex of the ocean $C$. Then there are three possibilities: 

\begin{enumerate} 
\item three values at $v$ are positive and one is negative. 
\item two values at $v$ are positive and two are negative. 
\item one value at $v$ is positive and three are negative. 
\end{enumerate} 
In the first and third cases there are 3 hexagons in $C$ with $v$ as 
a common vertex, while in the second case there are 4 hexagons. 
Since regular hexagon has $120^{\circ}$ angle at any vertex, in the 
second case the sum of all angles at $v$ is $480^{\circ} > 360^{\circ}$ 
so $v$ is a point of negative curvature on $C$.  The following picture illustrates 
two possibilities at $v$. The thick edges of $Lk_X(v)$ correspond to 2-cells in $C$.

 \begin{picture}(300,180)(-80,-20)

\thicklines 

\put(00,40){\line(1,0){120}}
\put(00,40){\line(3,-1){90}}
\put(120,40){\line(-1,-1){30}}

\thinlines 

\put(00,40){\line(1,1){60}}
\put(120,40){\line(-1,1){60}}
\put(90,10){\line(-1,3){30}}

\put(50,50){$v$}
\put(60,50){\circle*{3}}

\put(00,90){$Lk_X(v)$}

\put(35,10){--}
\put(50,70){+}
\put(80,60){+}
\put(85,90){+} 

\thicklines

\put(160,40){\line(1,0){120}}
\put(160,40){\line(3,-1){90}}
\put(280,40){\line(-1,1){60}}
\put(250,10){\line(-1,3){30}}

\thinlines 

\put(280,40){\line(-1,-1){30}}
\put(160,40){\line(1,1){60}}

\put(210,50){$v$}
\put(220,50){\circle*{3}}

\put(160,90){$Lk_X(v)$}

\put(195,10){--}
\put(210,70){+}
\put(240,60){--}
\put(245,90){+} 

\end{picture}

We now show directly that there is a point of negative curvature on the ocean. 

\begin{prop}\label{negative} Let $f$ be an indefinite integral binary hermitian 
form, not representing 0. Then every connected component of an ocean $C$ contains a vertex 
where 4 hexagons meet, that is,  there are points of negative curvature in $C$. 
Moreover the minimum of $|f|$ is achieved at a vertex of 
negative curvature. 
\end{prop}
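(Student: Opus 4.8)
The plan is to prove the two assertions separately, driving both with the classification of ocean vertices into flat and negatively curved ones and with the cancellation $\sum_i\rho^i=0$ that already powers Proposition \ref{climbing} and Lemma \ref{morse}. First I would record the local picture. At a vertex $v$ of $C$ the link $Lk_X(v)$ is the $1$-skeleton of a tetrahedron whose four faces carry the signs of the four regions, and a $2$-cell at $v$ lies in $C$ exactly when the two faces it separates have opposite signs. Each sign distribution $(3,1)$, $(2,2)$, $(1,3)$ makes the opposite-sign edges of the tetrahedron into a single cycle (a triangle in the $(3,1)$ and $(1,3)$ cases, a square in the $(2,2)$ case), so the link of $v$ in $C$ is a single circle; hence $C$ is a surface, $v$ meets three hexagons (angle sum $360^\circ$, flat) in the $(3,1),(1,3)$ cases and four hexagons (angle sum $480^\circ$, curvature $-\tfrac{2\pi}{3}$) in the $(2,2)$ case. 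Thus the negative-curvature vertices are exactly the ``$(2,2)$'' vertices.

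For the first assertion I would argue by combinatorial Gauss--Bonnet. Fix a component $C_0$ with stabiliser $\Gamma_0\subseteq U(f)$, which acts cocompactly by Theorem \ref{T1}; passing to a torsion-free finite-index $\Gamma_1\subseteq\Gamma_0$ (Selberg), $C_0/\Gamma_1$ is a closed surface carried by $\pi$ (the Main Theorem) homeomorphically onto a closed hyperbolic surface, so $\chi(C_0/\Gamma_1)<0$. Since $\sum_w\kappa(w)=2\pi\chi(C_0/\Gamma_1)<0$ while flat vertices contribute $0$, some vertex has $\kappa<0$, i.e.\ is a $(2,2)$ vertex; this recovers Corollary \ref{negative curvature} componentwise. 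Equivalently, a component with only flat vertices would be a Euclidean honeycomb on which $\Gamma_0$ acts cocompactly, forcing $\Gamma_0$ virtually $\mathbb Z^2$, which is impossible inside the Fuchsian group $U(f)$.

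The real content is localising a $(2,2)$ vertex at the minimum. Let $s=\min|f|$ and let $R=H_\alpha$ satisfy $F(\alpha)=-s$ (the case $+s$ is symmetric); $\partial H_\alpha$ is a hexagonally tiled plane whose faces $\sigma_{\alpha,\beta}$ are colored by $\mathrm{sign}\,F(\beta)$, and \emph{every} vertex on it meets $\alpha$, so its four-value minimum is exactly $s$. By the first assertion $\alpha$ must have a same-sign neighbour (else $\partial H_\alpha$ is an all-flat component, a contradiction), so both colors occur and there is an ocean cell $\sigma_{\alpha,\beta}$, corresponding to a lax basis $\{\bfu,\bfv\}$, that is edge-adjacent along $\partial H_\alpha$ to a same-sign cell. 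Its six edge-neighbours on $\partial H_\alpha$ are the cells attached to the cusps of $\bfu+\rho^i\bfv$, and the decisive identity
\[
\sum_{i=0}^{5} f(\bfu+\rho^i \bfv)=6\bigl(f(\bfu)+f(\bfv)\bigr)=6\bigl(F(\alpha)+F(\beta)\bigr)\ge 0
\]
(using $\sum_i\rho^i=0$ and $F(\beta)\ge s$) shows the six values cannot all be negative; as one of them is negative, their cyclic sign sequence changes sign, and each sign change is a vertex of $\sigma_{\alpha,\beta}$ with two positive and two negative incident regions, i.e.\ a $(2,2)$ vertex. Lying on $\partial H_\alpha$, it realises $|f|=s$.

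The hard part is exactly this last step: ruling out a minimal region whose shoreline is a ``convex island'' bounded only by flat ($(3,1)$-type) vertices. Pure turning/Euler bookkeeping fails here, since a single boundary hexagon has six flat corners and no negative one; it is the cancellation $\sum_i\rho^i=0$, forcing $\sum_i f(\bfu+\rho^i\bfv)\ge 0$ around any shore cell of the minimal region, that breaks the tie and produces the required negative-curvature vertex. I would also supply the routine checks that $\partial H_\alpha$ is connected (a tiled plane) and that the first assertion genuinely forbids an all-opposite-sign shoreline, and I would verify the consecutive edge-neighbours $\bfu+\rho^i\bfv$, $\bfu+\rho^{i+1}\bfv$ indeed complete $\{\bfu,\bfv\}$ to the ultrabasis at the intervening vertex, as in the description of the hexagon's edges.
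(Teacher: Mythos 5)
Your local analysis of ocean vertices (the $(3,1)$, $(2,2)$, $(1,3)$ trichotomy, the link being a single circle, and $(2,2)$ being exactly the negatively curved case) is correct, and your Gauss--Bonnet argument for the first assertion is valid since the Main Theorem precedes this proposition. Note, however, that the paper's own proof is deliberately elementary and self-contained: starting at a $(3,1)$-vertex of the ocean it runs a descent using Proposition \ref{climbing}, replacing the largest positive value $f(\bfu+\bfv)$ by $f(\bfu+\rho^{2}\bfv)<f(\bfu+\rho\bfv)$ and iterating until a value goes negative; this produces a $(2,2)$-vertex in the same component without invoking the Main Theorem or Euler characteristics, and it simultaneously controls the minimum of $|f|$. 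Your route buys nothing extra here, and given the Main Theorem the ocean is connected, so the componentwise phrasing becomes moot.

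The genuine gap is in the second assertion. You fix $\alpha$ with $F(\alpha)=-s$, $s=\min|f|$, and your argument needs \emph{both} colors to occur among the neighbours of $\alpha$: a positive neighbour $\beta$ to produce the ocean cell $\sigma_{\alpha,\beta}$, and a negative one to guarantee that some $f(\bfu+\rho^{i}\bfv)$ is negative. Your parenthetical only excludes the case where \emph{all} neighbours are positive (the all-flat-component contradiction); it says nothing about the case where all neighbours of $\alpha$ have the same sign as $\alpha$, i.e.\ the minimizing region is ``inland'' and does not border the ocean at all, so ``both colors occur'' is a non sequitur. This case is not vacuous a priori: normalizing $\alpha=\infty$, the positive neighbours are the points of $A$ inside the circle $f(z,1)=0$, whose radius is $\sqrt{\Delta/(3s^{2})}$, and ruling out an empty intersection with $A$ requires an a priori bound on $s$ in terms of $\Delta$ --- essentially Theorem \ref{T3}, which is deduced \emph{from} this proposition. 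The paper closes exactly this hole with the descent in the paragraph preceding the proposition (from a vertex where all four values have one sign one moves toward the ocean without increasing the minimum of the four absolute values), and you would need to supply that argument, or an equivalent proof that the global minimum of $|F|$ is attained at a region bordering the ocean, before your sum identity $\sum_{i}f(\bfu+\rho^{i}\bfv)=6\bigl(f(\bfu)+f(\bfv)\bigr)\ge 0$ --- which is a correct and pleasant way to finish once $\alpha$ is known to be coastal.
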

\begin{proof} Assume that we are at a vertex of $C$ corresponding to an 
ultrabasis $\{\bfu,\bfv,\bfu+\bfv,\bfu+\rho \bfv\}$ such that 
\[
f(\bfu+\bfv)\geq f(\bfu)\geq f(\bfu+\rho \bfv)> 0 > f(\bfv).
\]
Consider an adjacent vertex corresponding to the 
ultrabasis $\{\bfu,\bfv,\bfu+\rho \bfv, \bfu +\rho^{2} \bfv\}$, which clearly still 
belongs to $C$. Then 
 \[
 f(\bfu+\bfv) + f(\bfu+\rho^{2}\bfv)= f(\bfu)+f(\bfv)+f(\bfu+\rho\bfv)< 
 f(\bfu) + f(\bfu+\rho\bfv) 
 \]
and this implies that $f(\bfu+\rho^{2}\bfv)< f(\bfu+\rho\bfv)$. 
If $f(\bfu+\rho^{2}\bfv)<0$ we stop,
 otherwise we continue the process. Once two values are positive and two negative, 
 there are four hexagons in $C$ that meet at the vertex.  Also, note that this 
 process does not increase the minimum of four absolute values at the vertex. 
 Thus the minimum of $|f|$ is achieved at a vertex in $C$ of negative curvature. 
\end{proof}

We can use existence of negative curvature vertex to determine classes
of indefinite forms of fixed discriminant. Take $\Delta =6$, for
example.  Let $f$ be a form of discriminant $6$.  Assume that we are
at a vertex where $f$ takes values $a, b > 0 > c, d$.  Since $a^2+b^2
\geq 2ab$ and $c^2+d^2\geq 2cd$, the formula for the discriminant
implies that
\[ 
6 \geq ab - ac - ad - bc - bd +cd
\] 
where all summands on the right are positive. This is possible only if 
$a=b=1$ and $c=d=-1$.  Thus there is only one class of forms of 
discriminant $6$.

\begin{thm} \label{T3}  Let $A=\bbZ[\rho]$, and let 
 $f$ be a primitive, integral,  indefinite hermitian 
form of discriminant $\Delta$, not representing 0. Then 
there exists $\bfv\neq 0$ in $A^2$ such that $|f(\bfv)|\leq \sqrt{\frac{\Delta}{6}}$. 
Moreover, the equality is achieved only for the unique class of forms of 
discriminant $6$. 
\end{thm}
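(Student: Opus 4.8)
The plan is to combine the discriminant formula of Proposition \ref{formula} with the existence of a negative-curvature vertex furnished by Proposition \ref{negative}. First I would invoke Proposition \ref{negative} to place myself at a vertex $v$ of the ocean $C$ where four hexagons meet, so that among the four values $a,b,c,d$ of $f$ on the ultrabasis at $v$ exactly two are positive and two negative; after relabeling assume $a,b>0>c,d$. The same proposition guarantees that the minimum of $|f|$ over all nonzero vectors is attained here, so if $s=\min(a,b,-c,-d)$ then $s=\min|f|$, and it suffices to bound $s$.

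Next I would expand the discriminant using Proposition \ref{formula}:
\[ \Delta = a^2+b^2+c^2+d^2-ab-ac-ad-bc-bd-cd. \]
Applying the elementary inequalities $a^2+b^2\geq 2ab$ and $c^2+d^2\geq 2cd$ converts this into
\[ \Delta \geq ab+cd-ac-ad-bc-bd, \]
and the sign hypotheses $a,b>0>c,d$ make all six summands on the right strictly positive. Since each of $ab$, $cd$, $-ac$, $-ad$, $-bc$, $-bd$ is a product of two of the quantities $a,b,-c,-d$, each is at least $s^2$; summing gives $\Delta\geq 6s^2$, hence $s\leq\sqrt{\Delta/6}$. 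Taking $\bfv$ to be the lax vector realizing the minimum yields the desired inequality.

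For the equality statement I would trace back through these estimates. Equality in $\Delta\geq 6s^2$ forces equality at every step: $a=b$ and $c=d$ from the two squares, and each of the six products equal to $s^2$ forces $a=b=-c=-d=s$. Feeding these values back into the formula of Proposition \ref{formula} shows that $f$ equals $s$ times the form taking values $1,1,-1,-1$ on the ultrabasis; primitivity of $f$ then forces $s=1$ and hence $\Delta=6$. Conversely, the computation preceding the theorem exhibits a single $\GL_2(A)$-class of forms of discriminant $6$, so equality is attained exactly on that class.

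The main obstacle is not the algebra, which is routine, but correctly securing the geometric input: one must be certain that the minimum of $|f|$ is genuinely attained at a vertex of the second type (two positive, two negative values) rather than merely at some ocean vertex, since only there do all six cross terms carry the favorable sign needed to extract the bound $6s^2$. This is exactly the content of Proposition \ref{negative}, so the essential work is already in place; the remaining care lies in the equality analysis, where one must check that primitivity rules out every larger value of $s$.
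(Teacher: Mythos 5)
Your proposal is correct and follows essentially the same route as the paper: invoke Proposition \ref{negative} to land at an ocean vertex with values $a,b>0>c,d$, apply $a^2+b^2\geq 2ab$ and $c^2+d^2\geq 2cd$ to the discriminant formula of Proposition \ref{formula} to get $\Delta\geq ab+cd-ac-ad-bc-bd\geq 6s^2$, and trace equality back to $a=b=-c=-d$ with primitivity forcing $s=1$ and $\Delta=6$. Your write-up is somewhat more explicit than the paper's (e.g.\ in spelling out the six cross terms and the role of primitivity), but there is no substantive difference.
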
 
\begin{proof} 
 Let $a,b,c$ and $d$ be the values of $f$ at a vertex of the ocean. 
By Proposition \ref{negative} we can assume that $a,b>0$ and $c,d<0$. 
Then $-ab$ and $-cd$ are the only negative terms in the expression for 
$\Delta$. Since $a^2+b^2\geq 2ab$ and $c^2+d^2\geq 2cd$, it follows that 
\[ 
\Delta \geq ab - ac - ad - bc - bd +cd\geq 6 s^2 
\] 
where $s$ is the minimum of $a,b, -c$ and $-d$. Note that equalities hold only
if $a=b=-c=-d$. Since $f$ is primitive, this implies that $a=b=1$ and 
$c=d=-1$. 
\end{proof}

\end{document}